\newlist{hypothesis}{enumerate}{1}
\setlist[hypothesis]{label=({\bfseries A}\arabic*),leftmargin=*}  
		\renewcommand{\subsection}{\@startsection
			{subsection} 
			{2} 
			{0mm} 
			{0.5\baselineskip} 
			{0.3\baselineskip} 
			{\normalfont\normalsize\raggedright}} 
\numberwithin{equation}{section}
\def\theequation{\arabic{section}.\arabic{equation}}
\newcommand{\be}{\begin{eqnarray}}
	\newcommand{\ee}{\end{eqnarray}}
\newcommand{\ce}{\begin{eqnarray*}}
	\newcommand{\de}{\end{eqnarray*}}
\newtheorem{theorem}{Theorem}[section]
\newtheorem{lemma}[theorem]{Lemma}
\newtheorem{remark}[theorem]{Remark}
\newtheorem{definition}[theorem]{Definition}
\newtheorem{proposition}[theorem]{Proposition}
\newtheorem*{theorem*}{Theorem}
\newtheorem*{remark*}{Remark}
\def\geq{\geqslant}
\def\leq{\leqslant}
 \def\R{\mathbb R}
 \def\R{\mathbb R}
\def\<{\langle} \def\>{\rangle}
\def\x{{\bf x}}
\def\y{{\bf y}}
\def\E{\mathbb{E}}
\def\P{\mathbb{P}}
\def\R{\mathbb{R}}
\def\S{\mathbb{S}}
\def\d{\mathrm{d}}
\def\1{\mathbbm{1}}
\def\e{\mathrm{e}}
\def\ito{It$\hat{\mathrm{o}}$}
\def\gron{Gr$\ddot{\mathrm {o}}$nwall}
\begin{document}

	\title{Well-posedness and ERGODICITY OF FUNCTIONAL STOCHASTIC  PARTIAL DIFFERENTIAL EQUATIONS WITH MARKOVIAN SWITCHING}

	\author{Fubao Xi$^{\text{a,1}}$, Mingkun Ye$^\text{b}$ and Zuozheng Zhang$^{\text{a,1,*}}$}
    \dedicatory{$^{\rm a}$School of Mathematics and Statistics,
		Beijing Institute of Technology, 
		Beijing, 100081,  P.R.China \\
    $^{\rm b}$School of Mathematics,
		Sun Yat-sen University, Guangzhou, 510275, P.R.China
     }
        \thanks{$^{1}$  Supported in part by the National Natural Science Foundation of China under Grant No. 12071031.}
        \thanks{$^{*}$ Corresponding author.}
        \thanks{E-mail: xifb@bit.edu.cn(F.X.), yemk@mail2.sysu.edu.cn(M.Y.),
    zuozhengzhang@mail.bnu.edu.cn(Z.Z.)}

	\begin{abstract}
 	This work focuses on a class of semi-linear functional stochastic  partial differential equations with Markovian switching, in which the switching component may have finite or countably infinite  states. The well-posedness of the underlying process is obtained by Skorokhod’s representation of the switching component. Then, the exponential mixing of such processes in a finite state space is derived by using the so-called remote start method proposed firstly by Prato and Zabczyk in \cite{Prato_Zabczyk_Ergodicity_1996}. Finally, the corresponding result in a countable infinite state space is further obtained via the finite partition method.\\
	{\it AMS Mathematics Subject Classification (2020) :} 	
     34K34, 34K50, 37A25, 60H15, 60J27
	\\
		{\it Keywords:} Markovian switching;  Functional stochastic partial differential equation; Mild solution; Invariant measure;   Exponential mixing
    \\ 
	\end{abstract} 

	
	\maketitle

 

\section{Introduction}

Let \( \left(H,\langle\cdot, \cdot\rangle_{H},\|\cdot\|_{H}\right) \) be a real separable Hilbert space, and \( (W(t))_{t \geq 0} \) be a cylindrical Wiener process on another real separable Hilbert space \( \left(U,\langle\cdot, \cdot\rangle_{U}\right) \) under a complete filtered probability space \( \left(\Omega, \mathscr{F},\left(\mathscr{F}_{t}\right)_{t \geq 0}, \mathbb{P}\right) \). 
The space of all Hilbert–Schmidt operators from $U$ to $H$ is denoted by $L_2(U; H)$. 
Let $ \mathbb{S} :=\{1,\cdots,N\}$, $1\leq N< \infty$ or $N=\infty$, be the state space.  Let \( \mathscr{C}=C((-\infty, 0] ; H) \) be the space of all continuous functions \( f:(-\infty, 0] \to H \), and
\[
 \mathscr{C}_r:=\left\{\mathbf{x} \in \mathscr{C}:
 \lim_{\theta\to-\infty }\e^{r\theta}\x(\theta) \text{ exists in }H
 \right\}.
\]
with the norm $\|\mathbf{x}\|_r:=\sup_{-\infty<\theta \leqslant 0 }\e^{r\theta}\|\mathbf{x}(\theta)\|_{H}<\infty$. Then $\left(\mathscr{C}_r,\|\cdot\|_r\right)$ is a Polish space; see \cite{hino2006functional} for more details.

In this paper, consider the semi-linear functional stochastic  partial differential equation with Markovian switching (FSPDEwM to be short)
\begin{equation}\label{EQ:FRSPDE:01}
    		\begin{cases}
    		    \d X(t)=\left[A(\Lambda(t)) X(t)+b\left(X_{t}, \Lambda(t)\right)\right] \d t+\sigma\left(X_{t}, \Lambda(t)\right) \d W(t),\\
          \mathbb{P}(\Lambda(t+\Delta)=l \mid \Lambda(t)=k)= \begin{cases}q_{kl} \Delta+o(\Delta), & l \neq k, \\ 1+q_{kk} \Delta+o(\Delta), & l=k,\end{cases}\\
          (X_{0},\Lambda(0))=(\varphi,i)\in\mathscr{C}_r\times \mathbb{S},
    		\end{cases}
\end{equation}
where   $A(k):\mathbb{S}\rightarrow \mathscr{L}(H)$, $b:\mathscr{C}_r\times \mathbb{S} \rightarrow H, $ and $\sigma: \mathscr{C}_{r} \times \mathbb{S} \rightarrow L_2(U;H)$ are measurable mappings associated with the corresponding Borel algebra generated by the concrete topology.  The segment process $(X_t)_{t\geq 0}$ of $(X(t))_{t\geq 0}$ is defined by $X_t(\theta):=X(t+\theta),~\theta\in (-\infty,0]$. $Q=(q_{kl})_{N\times N}$ is the generator of the Markovian switching component $(\Lambda(t))_{t\geq 0}$, where $q_{kl} \geq 0$ is the transition rate from $k$ to $l$, if $l \neq k$; while $q_{kk}=-\sum_{l \neq k} q_{kl}$. Assume that the Markov chain $(\Lambda(t))_{t\geq 0}$ is irreducible and independent of the cylindrical Wiener process $(W(t))_{t\geq 0}$.

The so-called regime-switching diffusion in which continuous dynamics and discrete events coexist has received extensive attention in recent years. Meanwhile, such processes have broad applications in stochastic control and optimization, ecological and biological systems, mathematical finance, risk management; see \cite{bao2016permanence,Mao_Yuan_Book_2006,song2012optimal,Yin2010,zhang2001stock,zhu2009competitive} and references therein. Recently,  the ergodicity of the regime-switching diffusion has drawn a great deal of attention. Cloez and Hairer \cite{cloez2015exponential} study the exponential ergodicity for a Markov process with a continuous component and a discrete component by using a coupling argument and a weak form of the Harris theorem. Shao \cite{SHAO2015SPA}  provides some criteria to
justify the ergodocity of regime-switching diffusion processes in Wasserstein distances based on the theory of the $M$-matrix and the Perron–Frobenius theorem. Shao has also done some other work on ergodicity as well; see \cite{shao2018invariant,shao2013strong} for example. Xi \cite{xi2009asymptotic} and Xi and Zhu \cite{xi2017feller} investigate the exponential ergodicity for regime-switching jump diffusion processes using the Lyapunov function method. 

It is worth pointing out that stochastic processes in these literature do  not depend on  past history.  However, these real-world systems often depend not only on their current state but also on certain past states. In other words, they inherently exhibit aftereffects and delays.  Based on the estimate of exponential functionals of the underlying Markov chain, Bao et~al.   \cite{BAO2023NONLINEAR} and Li and Xi  \cite{li2021convergence} show that there exists a unique invariant probability measure which converges exponentially to its equilibrium under the Wasserstein distance for regime-switching diffusions with finite and infinite delay, respectively.   Nguyen et~al. \cite{nguyen2021stability} establish the same exponential ergodicity for regime-switching diffusions with finite delay under some new conditions. Recently, Zhai and Xi \cite{zhai2024exponential} investigate the exponential ergodicity for several kinds of functional stochastic  differential equations (FSDEs) with Markovian switching and finite delay, including FSDEs, neutral FSDEs and  FSDEs driven by Lévy processes.


For the study of the asymptotic behavior of SPDEs, we provide references on two fundamental methods, i.e., the coupling method and the dissipative method.
The coupling methods are derived from \cite{KUKSIN2001CMP,Masmoudi2002CMP,Mattingly2002CMP}, where the authors study the 2D Navier-Stokes equations.
Subsequently, Hairer \cite{hairer2002PTRF} developes this method into a general framework for studying the ergodicity of parabolic SPDEs  that may lack the strong Feller property. Inspired by Hairer's works, Oleg et~al. \cite{OLEG2020AAP} put forward a generalized coupling method to give general sufficient conditions to derive the exponential ergodicity of the stochastic model, including a variety of nonlinear SPDEs with additive forcing. 

In terms of using dissipative methods to study the asymptotic behavior of solutions to SPDEs or FSPDEs, the existing literature provides a wealth of results. For instance, based on standard Krylov–Bogoliubov procedure, Liu \cite{LIU2009JEE} proves the exponential ergodicity of stochastic evolution equations under dissipativity conditions.  Bao et al.~\cite{BAO204NATMA,Bao_Yin_Yuan_Book_2016} introduce a unified approach, referred to as the remote start method (which is still within the framework of the dissipative method), to establish the existence and uniqueness of the invariant measure and the exponential ergodicity for various types of FSDEs, including FSDEs with variable delays, neutral FSDEs and FSDEs driven by jump processes. Furthermore, this method can also be extended to semi-linear FSPDEs driven by cylindrical Wiener processes or cylindrical $\alpha$-stable processes. Further references on ergodicity of infinite-dimensional systems can be found in the monograph \cite{Prato_Zabczyk_Ergodicity_1996}. 

Inspired by these works, we focus on the FSPDEwM  \eqref{EQ:FRSPDE:01} for practical purposes in the modeling of complex systems. To the best of our knowledge, the study on the ergodicity for such processes  is relatively scarce. The aim of this paper is to investigate the exponential mixing of the underlying process by using the remote start method.  In this paper,   we first give the existence and uniqueness of mild solutions (see Definition \ref{DEf:1103:1} below) for the FSPDEwM  \eqref{EQ:FRSPDE:01} in Section \ref{sec:3}.  For this aim, we study the well-posedness for FSPDEs \eqref{EQ:GENERAL:01} without Markovian switching under the Lipschitz condition and the dissipativity condition, respectively; see Appendix~\ref{Appendix} for more details. Then, based on the above results, we derive the main result in this section by virtue of Skorokhod’s representation \eqref{Eq:1026:1} of the switching process $(\Lambda(t))_{t\geq 0}$ as in \cite{NG2016}. The strong Markov property of the process $(X_t,\Lambda(t))_{t\geq 0}$ is also studied. 

In Section \ref{sec:4}, we use the remote start method to deal with the  FSPDEwM \eqref{EQ:FRSPDE:01}  in a finite state space. Just like the argument in \cite{ Bao_Yin_Yuan_Book_2016}, we need to construct the corresponding double-side processes. It is noteworthy that, due to the existence of the switching process, we also need to construct a double-sided Poisson process, which is given by virtue of Skorokhod's representation. Benefit from the driving process $(\overline{W},N_0)$, we can consider the regularized FSPDEwM \eqref{EQ:FRSPDE:02} which can start from any initial time $s\in \R.$  Then, we shall demonstrate the uniform boundedness of the solution process $(X(t))_{t\geq s}$ and establish the $L^2(\Omega;H)$-convergence of two processes $(X(t;s,(\varphi,i)))_{t\geq s}$ and $(X(t;s,(\psi,i)))_{t\geq s}$ starting from different initial data. Thanks to these careful calculations, we can extend the aforementioned results to the segment process $(X_t)_{t\geq s}$. Finally, we leverage the remote start method to derive the exponential mixing.

In Section \ref{sec:5}, we continue to investigate the exponential mixing for  the FSPDEwM  \eqref{EQ:FRSPDE:01} in an infinite state space. To overcome the difficulty caused by the infinite state space, we use the finite partition approach developed in \cite{SHAO2015SPA}.  More precisely, we divide the infinite state space $\S$ into finite subsets and construct a new Markov chain in a finite state space. Moreover, based on an estimate of the exponential moment of the stopping time $\tau$, we arrive at \eqref{Eq:1019:1}. With the aid of the new Markov chain and  \eqref{Eq:1019:1}, we also obtain the exponential mixing in the infinite state space.

This paper is organized as follows. Section \ref{sec:2} introduces some necessary notations, notions, assumptions together with some basic theories. 
Section \ref{sec:3}  establishes the existence and uniqueness of mild solutions for the FSPDEwM  \eqref{EQ:FRSPDE:01}. In addition, the strong Markov property of the process $(X_t,\Lambda(t))_{t\geq 0}$ is also studied. Section \ref{sec:4} is  devoted to establishing the uniform boundedness and convergence for the segment process, and the exponential mixing  for the FSPDEwM  \eqref{EQ:FRSPDE:01} in a finite state space. Section \ref{sec:5} proceeds to the study of  the FSPDEwM  \eqref{EQ:FRSPDE:01} in an infinite state space and derives a similar result.
\section{Preliminary}\label{sec:2}
In this section, we first prepare some notation, assumptions and definitions.  Write $\mathbb{R}^{+}=[0, \infty)$ and $\mathbb{R}^{-}=(-\infty, 0]$.  Let $A$ be a vector or a matrix, we use $A^{\top}$ to denote its transpose.  We say that \( A \geq 0 \) if all elements of \( A \) are non-negative. We denote \( A > 0 \) to indicate that \( A \geq 0 \) and that at least one element of \( A \) is positive. Furthermore, we say \( A \gg 0 \) if all elements of \( A \) are strictly positive. Conversely, \( A \ll 0 \) signifies that \( -A \gg 0 \). 
For $a, b \in \mathbb{R}$, $a \wedge b=\min \{a, b\}$, $a \vee b=\max \{a, b\},$ and $a\lesssim b$ means $a\leq cb$  for some $c>0$. For $\xi=(\xi(1),\cdots,\xi(N))^{\top}$, define $\xi_{max}=\max_{1 \leq k \leq N}\xi(k)$ and $\xi_{min}=\min_{1 \leq k \leq N}\xi(k)$ as $N<\infty$, and $\xi_{sup}=\sup_{k\geq 1}\xi(k)$ and $\xi_{min}=\inf_{k\geq 1}\xi(k)$ as $N=\infty$. Let $\textbf{0}_H$ be the zero element of the space $H$. For any $T\in L_2(U; H),$ define
\[
\|T\|_{L_{2}(U; H)}^{2}:=\sum_{k=1}^{\infty}\left\|T e_{k}\right\|_H^{2},
\]
which does not depend on the choice of the orthonormal basis \( (e_{k})_{k \geq 1} \) on $U$.  For simplicity, we also write \( \|\cdot\|_{L_{2}} \) instead of \( \|\cdot\|_{L_{2}(U; H)} \). Denote by \( \|\cdot\| \) the operator norm. Let $\mathscr{P}(\mathscr{C}_{r} \times \mathbb{S})$ be the set of all probability measures among $\mathscr{C}_{r} \times \mathbb{S}$. Let $\mathscr{P}(\mathbb{R}^{-})$ be the set of all probability measures among $\R^{-},$ and
\[
\mathscr{P}_{r}\left(\mathbb{R}^{-}\right):=\left\{\rho \in \mathscr{P}\left(\mathbb{R}^{-}\right): \rho^{(r)}:=\int_{-\infty}^{0} \e^{-r \theta} \rho(d \theta)<\infty\right\}.
\]

To make our argument in the following more precise, we give an explicit construction of the complete filtered probability space \( \left(\Omega, \mathscr{F},\left(\mathscr{F}_{t}\right)_{t \geq 0}, \mathbb{P}\right) \). Let
	\[
	\Omega_{1}=\left\{\omega\mid \omega:[0, \infty) \rightarrow H \text { is continuous with } \omega(0)=\textbf{0}_H\right\},
	\]
	which is endowed with the locally uniform convergence topology and a probability measure ${\mathbb{P}_{1}}$ so that $\{\langle W(t, \omega),e_k\rangle \}_{k\geq 1}$ is a denumerable sequence of i.i.d. \( \mathbb{R} \)-valued Wiener processes, where ${W(t, \omega):=\omega(t), t \geq 0}$. Put
	\[
	{\Omega_{2}=\{\omega \mid \omega:[0, \infty) \rightarrow \S \text{ is right continuous with left limit}\}}
	\]
	endowed with Skorokhod topology and a probability measure ${\mathbb{P}_{2}}$ so that the coordinate process
	$
	\Lambda(t, \omega):=\omega(t),  t \geq 0,
	$
	is a continuous time Markov chain with the generator ${Q=(q_{ij})_{N\times N}}$. Let
 \[
	(\Omega, \mathcal{F}, \left\{\mathscr{F}_t\right\}_{t \geq 0},\mathbb{P})=(\Omega_{1} \times \Omega_{2},  \mathscr{B}\left(\Omega_{1}\right) \otimes \mathscr{B}\left(\Omega_{2}\right), \{\mathscr{F}_t^{(1)}\otimes\mathscr{F}_t^{(2)}\}_{t \geq 0} ,\mathbb{P}_{1} \times \mathbb{P}_{2}),
	\]
where $\mathscr{B}\left(\Omega_{1}\right),\mathscr{B}\left(\Omega_{2}\right)$  are $\sigma$-algebras generated by the corresponding topology, and $\{\mathscr{F}_t^{(1)}\}_{t\geq 0}$, $\{\mathscr{F}_t^{(2)}\}_{t\geq 0}$ are complete natural filtration generated by $(W(t))_{t\geq 0}$ and $(\Lambda(t))_{t\geq 0}$ respectively.	Hence, under ${\mathbb{P}=\mathbb{P}_{1} \times \mathbb{P}_{2}}$, for ${\omega=\left(\omega_{1}, \omega_{2}\right), t \mapsto \omega_{1}(t)}$ is a cylindrical Wiener process on \( H \), and ${t \mapsto \omega_{2}(t)}$ is a continuous time Markov chain with the generator ${Q=(q_{ij})_{N\times N}}$. Moreover, $(W(t))_{t\geq 0}$ is independent of $(\Lambda(t))_{t\geq 0}$.  Throughout this paper, we will work on this complete filtrated probability space constructed above.

To guarantee the existence and uniqueness of the mild solution for the FSPDEwM \eqref{EQ:FRSPDE:01}, we first assume that for any $k \in \S$, $b(\cdot,k)$ and $\sigma(\cdot,k)$ are measurable and locally Lipschitz throughout this paper. Moreover,  we propose the following assumptions $(\mathbf{A})$:

\noindent(\hypertarget{A1}{A1}) For any \( k \in \mathbb{S} \), $ (-A(k), D(A(k))) $ is a self-adjoint operator with discrete spectrum
\[ 
0<\lambda_{1}(k) \leq \lambda_{2}(k) \leq \cdots \leq \lambda_{n}(k) \leq \cdots
 \]
where multiplicities are counted, and such that $ \lambda_{n}(k) \uparrow \infty, n \rightarrow \infty $. Furthermore, for any $ k\in \mathbb{S}, A(k) $ can generate a $ C_0 $-semigroup \( (\e^{t A{(k)}})_{t \geq 0} \) satisfying \( \|\e^{t A(k)}\| \leqslant \e^{-\lambda_{1}(k) t}, t \geqslant 0 \).

\noindent(\hypertarget{A2}{A2}) For any \(k\in  \mathbb{S} \), there exist constants \( \alpha(k) \in \mathbb{R} \), \( \beta(k)>0 \)  and a probability measure \( \rho \in \mathscr{P}_{2 r}\left(\mathbb{R}^{-}\right) \) such that for any  \( \mathbf{x} , \mathbf{y} \in \mathscr{C}_{r} \), 
	\[ 
	\begin{aligned} 
		& 2\langle\mathbf{x}(0)-\mathbf{y}(0), b(\mathbf{x}, k)-b(\mathbf{y}, k)\rangle_{H}\leqslant  \alpha(k)\|\mathbf{x}(0)-\mathbf{y}(0)\|_{H}^{2}+\beta(k) \int_{-\infty}^{0} \| \mathbf{x}(\theta)-\mathbf{y}(\theta)\|_{H}^{2} \rho(\d \theta).
		\end{aligned} 
	 \]
 \noindent(\hypertarget{A3}{A3}) There exists  a constant \( L>0 \) such that for any  \( \mathbf{x} , \mathbf{y} \in \mathscr{C}_{r} \) and \(k\in  \mathbb{S} \),
	\[
	\left\|\sigma(\mathbf{x}, k)-\sigma(\mathbf{y}, k)\right\|_{L_2}^{2}  
		 \leq L\left(\|\mathbf{x}(0)-\mathbf{y}(0)\|_{H}^{2}+\int_{-\infty}^{0}\|\mathbf{x}(\theta)-\mathbf{y}(\theta)\|_{H}^{2} \rho(\d \theta)\right),
	\] 
    where $\rho$ is determined in (\hyperlink{(A2)}{A2}).

When $\S$ is an infinite set, we need some additional assumptions $(\mathbf{B})$ as follows.

\noindent(\hypertarget{B1}{B1}) Assume that for any $k\in \S$,
\begin{equation*}
    q_k:=-q_{kk}=\sum_{l\neq k}q_{kl}\leq M,
\end{equation*}
where $M$ is a positive constant independent of $k$.

\noindent(\hypertarget{B2}{B2}) 
Assume that  ${\alpha_{sup}},{\beta_{sup}}<\infty$ and $$\sup_{k\in \S}\|b(\textbf{0}, k)\|_H,\;  \sup_{k\in \S}\|\sigma(\textbf{0}, k)\|_{L_2}<\infty.$$

In what follows, we write ${X(t)}$, $X_t$ and ${\Lambda(t)}$ as ${X(t;0,(\varphi,i))}$, $X_{t}(0,(\varphi,i))$ and ${\Lambda(t;0,i)}$ respectively, to emphasize the initial data $(X_{0},\Lambda(0))=(\varphi,i)$. Define a family of transition probability by
\[
P_t((\varphi,i),\cdot)=\mathbb{P}((X_t(0,(\varphi,i)),\Lambda(t;0,i))\in \cdot)
\]
for any $t\geq 0$ .  By virtue of Theorem \ref{Thm:1105:1}, the process \( (X_{t},\Lambda(t))_{t \geq 0} \) is a strong Markov process. Define the Markovian transition semigroup \( (P_{t})_{t \geq 0} \) associated with the aforementioned process by \[
P_{t} f(\varphi,i):=\mathbb{E} [f (X_{t},\Lambda(t))] =\int_{\mathscr{C}_r\times \mathbb{S}}f(\psi,j)P_t((\varphi,i),\d \psi\times\d j), 
\]
for any $t \geq 0, (\varphi ,i)\in \mathscr{C}_r\times \mathbb{S},$ and $ f \in \mathscr{B}_{b}(\mathscr{C}_r\times \mathbb{S})$. By the Markov property, we have \( P_{t} \circ P_{s}=P_{t+s}, t, s \geq 0 \), where \( P_{t} \circ P_{s} \) means the composition of the operators \( P_{t} \) and \( P_{s} \).

\begin{definition}
    A probability measure \( \mu \in \mathscr{P}(\mathscr{C}_r\times\mathbb{S}) \), the collection of all probability measures on \( \mathscr{C}_r\times \mathbb{S} \), is invariant w.r.t. \( \left(P_{t}\right)_{t \geq 0} \) if
    \begin{equation}\label{Eq:5}
        \mu\left(P_{t} f\right)=\mu(f):=\int_{\mathscr{C}_r\times \mathbb{S}} f(\mathbf{x},k) \mu(\mathrm{d} \mathbf{x},\d k), \quad t \geq 0, f \in \mathscr{B}_{b}(\mathscr{C}_r\times \mathbb{S}) .
    \end{equation}
In this case, the FSPDEwM  \eqref{EQ:FRSPDE:01} is said to admit an invariant probability measure.
\end{definition}
To investigate the exponential convergence of \( \{P_{t}((\varphi,i), \cdot) \}_{t\geq 0}\), let us define a distance \( d(\cdot,\cdot) \) on \( \mathscr{C}_{r} \times \mathbb{S} \),
\[
d((\mathbf{x}, k),(\mathbf{y}, l)):=\|\mathbf{x}-\mathbf{y}\|_{r}+\ell(k, l), \quad(\mathbf{x}, k),(\mathbf{y}, l) \in \mathscr{C}_{r} \times \mathbb{S},
\]
where
\[
\ell(k, l)=\left\{\begin{array}{ll}
1, & k \neq l, \\
0, & k=l,
\end{array}\right.
\]
stands for the standard discrete distance on \( \S \). It is easy to verify that \( (\mathscr{C}_{r} \times \mathbb{S}, d(\cdot, \cdot)) \) is a Polish space.

\begin{definition}
    An  invariant probability measure \( \mu \) for \( \left(P_{t}\right)_{t \geq 0} \) is said to be exponentially mixing with exponent \( \lambda>0 \) and function \( c: \mathscr{C}_r\times\mathbb{S} \mapsto(0, \infty) \) if
\[
\left|P_{t} f(\varphi,i)-\mu(f)\right| \leq c(\varphi,i) \e^{-\lambda t}\|f\|_{\text {Lip }}, \quad t \geq 0, (\varphi,i) \in \mathscr{C}_r\times\mathbb{S}, f \in Lip(\mathscr{C}_r\times\mathbb{S}),
\]
in which \( Lip(\mathscr{C}_r\times\mathbb{S}) \) denotes the set of all Lipschitz functions \( f: \mathscr{C}_r\times\mathbb{S} \mapsto \mathbb{R} \), and 
\[ 
\|f\|_{\text {Lip }}:=  \sup \left\{\frac{|f(\mathbf{x}, k)-f(\y, l)|}{d((\mathbf{x}, k),(\y, l))} ;(\mathbf{x}, k) \neq(\y, l)\right\}<\infty, 
\]
i.e., the Lipschitz constant of \( f \). In other words, the Markovian transition semigroup \( \left(P_{t}\right)_{t \geq 0} \) converges exponentially fast to the equilibrium state in a certain sense.
\end{definition}

  
To proceed, since our criteria for the ergodicity of the process \((X_t, \Lambda(t))_{t \geq 0}\) are closely linked to the theory of \(M\)-matrices, we will introduce some fundamental definitions and notation regarding \(M\)-matrices. For a more comprehensive discussion of this well-established topic, we direct the reader to the book \cite{berman1994nonnegative}.  There are several conditions that are equivalent to the assertion that \( A \) is a non-singular \( M \)-matrix. In the following, we give the definition of \( M \)-matrix and outline some of these conditions; see \cite[Section 2.6]{Mao_Yuan_Book_2006} for further details. 
\begin{definition}[$M$-Matrix]
		A square matrix $A=\left(a_{i j}\right)_{n \times n}$ is called an $M$-Matrix if $A$ can be expressed in the form $A=s I-B$ with some $B \geq 0$ and $s \geq \operatorname{Ria}(B)$, where $I$ is the $n \times n$ identity matrix, and $\operatorname{Ria}(B)$ the spectral radius of $B$. When $s>\operatorname{Ria}(B), A$ is called a non-singular $M$-matrix.
	\end{definition}
		
\begin{proposition}\label{prop:aini}
		The following statements are equivalent.
		\begin{enumerate}
			\item  $A$ is a non-singular $n \times n$ $M$-matrix.
			\item  Every real eigenvalue of $A$ is positive.
			\item  $A$ is semipositive; that is, there exists $x \gg 0$ in $\mathbb{R}^n$ such that $A x \gg 0$.
			\item  $A$ is inverse-positive; that is, $A^{-1}$ exists and $A^{-1}\geq 0.$
			\item  For any $y\gg 0$ in $\mathbb{R}^n$, the linear equation $Ax=y$ has a unique solution $x\gg 0$.
		\end{enumerate}
	\end{proposition}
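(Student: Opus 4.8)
The plan is to run a single cycle of implications $(2)\Rightarrow(1)\Rightarrow(4)\Rightarrow(5)\Rightarrow(3)\Rightarrow(2)$, using the Perron--Frobenius theorem for nonnegative matrices as the engine. Throughout I would fix the representation $A=sI-B$ with $B\geq 0$ and keep in hand two elementary facts: first, the eigenvalues of $A$ are precisely the numbers $s-\mu$ as $\mu$ ranges over the eigenvalues of $B$, with the same eigenvectors; and second, by Perron--Frobenius the spectral radius $\operatorname{Ria}(B)$ is itself a nonnegative eigenvalue of $B$ possessing both a nonnegative right eigenvector and a nonnegative left eigenvector.

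The arc $(2)\Rightarrow(1)\Rightarrow(4)\Rightarrow(5)\Rightarrow(3)$ is then routine. For $(2)\Rightarrow(1)$, since $\operatorname{Ria}(B)$ is a real eigenvalue of $B$, the value $s-\operatorname{Ria}(B)$ is a real eigenvalue of $A$, hence positive by (2), so $s>\operatorname{Ria}(B)$ and $A$ is a non-singular $M$-matrix. For $(1)\Rightarrow(4)$, the condition $s>\operatorname{Ria}(B)$ gives $\operatorname{Ria}(B/s)<1$, so the Neumann series $A^{-1}=s^{-1}\sum_{k\geq 0}(B/s)^{k}$ converges, and every summand is nonnegative because $B\geq 0$, yielding $A^{-1}\geq 0$. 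For $(4)\Rightarrow(5)$, the vector $x:=A^{-1}y$ is the unique solution of $Ax=y$; when $y\gg 0$ it is strictly positive, because an invertible nonnegative matrix has no zero row, and a nonnegative row with at least one positive entry, paired against a strictly positive $y$, produces a positive coordinate. Finally $(5)\Rightarrow(3)$ is immediate: apply (5) to $y=(1,\dots,1)^{\top}\gg 0$ to get $x\gg 0$ with $Ax=y\gg 0$, which is exactly semipositivity.

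The step that carries the real content, and which I expect to be the main obstacle, is the closing implication $(3)\Rightarrow(2)$, since this is where Perron--Frobenius must be used in an essential, strict-inequality way. Given $x\gg 0$ with $Ax\gg 0$, that is $Bx\ll sx$, I would take a nonnegative left Perron eigenvector $w\geq 0$, $w\neq 0$, with $w^{\top}B=\operatorname{Ria}(B)\,w^{\top}$, and pair it against the strict inequality: $\operatorname{Ria}(B)\,w^{\top}x=w^{\top}(Bx)<w^{\top}(sx)=s\,w^{\top}x$. Here the strictness is the delicate point --- it relies on $w\neq 0$ together with the componentwise strict inequality $Bx\ll sx$ --- and, combined with $w^{\top}x>0$ (which holds since $w\geq 0$, $w\neq 0$, $x\gg 0$), it lets me divide to conclude $\operatorname{Ria}(B)<s$. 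Then every real eigenvalue $s-\mu$ of $A$ satisfies $\mu\leq|\mu|\leq\operatorname{Ria}(B)<s$ and is therefore positive, which is (2). No new estimate is needed beyond assembling the Perron--Frobenius facts correctly; the only genuine care required is in guaranteeing the existence of the nonnegative left eigenvector and in keeping the pairing inequality strict.
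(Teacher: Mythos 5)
Your proof is correct, but there is no proof in the paper to compare it with: the authors state this proposition as a known fact, directing the reader to Section 2.6 of Mao and Yuan's book and to the Berman--Plemmons monograph, so what you have produced is a self-contained reconstruction of a quoted result rather than an alternative to an argument the paper actually gives. Your single cycle $(2)\Rightarrow(1)\Rightarrow(4)\Rightarrow(5)\Rightarrow(3)\Rightarrow(2)$ is the standard textbook route, and every link is sound: Perron--Frobenius makes $\operatorname{Ria}(B)$ a real eigenvalue of $B$, giving $(2)\Rightarrow(1)$; the Neumann series $A^{-1}=s^{-1}\sum_{k\geq 0}(B/s)^{k}$ gives $(1)\Rightarrow(4)$; the no-zero-row observation gives $(4)\Rightarrow(5)$; the choice $y=(1,\dots,1)^{\top}$ gives $(5)\Rightarrow(3)$; and pairing a nonnegative left Perron vector $w$ of $B$ against the strict componentwise inequality $Bx\ll sx$ gives $(3)\Rightarrow(2)$, with the strictness of the pairing and the positivity of $w^{\top}x$ justified exactly as you say.

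Two points of hygiene. First, as literally stated the equivalences are false without the standing assumption that $A$ can be written as $sI-B$ with $B\geq 0$ at all (equivalently, that the off-diagonal entries of $A$ are non-positive): the matrix $A=\left(\begin{smallmatrix}1 & 1\\ 0 & 1\end{smallmatrix}\right)$ satisfies (2) and (3) but not (4). You supply this hypothesis implicitly when you ``fix the representation''; it is also implicit in the paper's statement (and explicit in the cited sources, where $A\in Z^{n\times n}$ is assumed), but it deserves to be said out loud. Second, in $(1)\Rightarrow(4)$, statement (1) only guarantees that \emph{some} representation $A=s'I-B'$ satisfies $s'>\operatorname{Ria}(B')$, whereas you apply the inequality to the representation fixed at the outset. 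This is harmless, but needs a remark: either run the Neumann series in the representation furnished by (1) (the argument does not care which one is used), or observe that $s-\operatorname{Ria}(B)=s'-\operatorname{Ria}(B')$ for any two representations, since $B-B'$ is a multiple of $I$ and the spectral radius of a nonnegative matrix equals the largest real part of its spectrum.
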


\section{Well-posedness and strong Markov property }\label{sec:3}
According to the general definition of mild solution (see \text{ \cite[Definition 6.2.1]{Liu_Rockner_Book_2015}}), we give the following definition of mild solutions with respect to the FSPDEwM   \eqref{EQ:FRSPDE:01}.  For this aim,  we first give the definition of random semigroup. 
\begin{definition}
Let $E$ be a Banach space and \( Y(E) \) be the topological vector space consisting of strongly continuous mappings from \([0, \infty) \times [0, \infty)\) into the Banach space \( B(E) \), which is comprised of all  linear transformations on the Banach space \( E \). A random non-time-homogeneous semigroup is defined as a random variable taking values in \( S(E) \subset Y(E) \), where \( S(E) \) represents the set of strongly continuous two-parameter semigroups of operators on \( E \).
\end{definition}

\begin{definition}\label{DEf:1103:1}
An \( H \)-valued predictable process \( X(t), t \in[0, T] \), is called a mild solution of  the FSPDEwM \eqref{EQ:FRSPDE:01}  if
\begin{equation}\label{EQ:MILD:12}
    \begin{aligned}
X(t)&=S(0,t,\Lambda(\cdot)) \varphi(0)  +\int_{0}^{t} S(s,t,\Lambda(\cdot)) b(X_s,\Lambda(s)) \mathrm{d} s \\
 &\quad+\int_{0}^{t} S(s,t,\Lambda(\cdot))  \sigma(X_s,\Lambda(s)) \mathrm{d} W(s) \quad \P \text {-a.s.}
\end{aligned}
\end{equation}
for each \( t \in[0,\infty) \), where 
$$S(s,t,\Lambda(\cdot)):=\exp\left\{\int_{s}^{t}A(\Lambda(u))\d u\right\}$$
is the random  non-time-homogeneous semigroup with respect to operators $A(\Lambda(\cdot))$ on \( H \). In particular, the integrals appearing in \eqref{EQ:MILD:12} have to be well defined.
\end{definition}
To proceed, we introduce Skorokhod’s representation of the switching process $(\Lambda(t))_{t\geq 0}$ in terms of the Poisson random measure as in \cite{NG2016}. Precisely, let
\[
	\Delta_{1 2}=\left[0, q_{1 2}\right), \Delta_{1 l}=\left[\sum_{j=2}^{l-1}q_{1j},\sum_{j=2}^{l}q_{1j}\right),\quad l\geq 3,
	\]
and for each $k \in \mathbb{S}$ and $k>1$, let
\[
	\Delta_{k 1}=\left[0, q_{k 1}\right), \Delta_{k l}=\left[\sum_{j=1,j\neq k}^{l-1}q_{kj},\sum_{j=1,j\neq k}^{l}q_{kj}\right),\quad l>1,l\neq k.
	\]
 Let
	\[
	U_{k}=\bigcup_{l \geq 1, l \neq k} \Delta_{kl}, \quad k \geq 1.
	\]
	For notation convenience, we put \( \Delta_{kk}=\varnothing \) and \( \Delta_{kl}=\varnothing \) if \( q_{kl}=0, k\neq l \). Note that for each $k \in \mathbb{S}$, $\{\Delta_{kl}:l\in \mathbb{S}\}$ are disjoint intervals, and the length of the interval $\Delta_{kl}$ is equal to $q_{kl}$.  When $\S$ is a finite set, it is obvious that  $\mathfrak{m}\left(U_{k}\right)$ is bounded  by $\max_{1\leq l\leq N}q_{l}$  for any $k\in\S$,  where \( \mathfrak{m}(\mathrm{d} x) \) denotes the Lebesgue measure over \( \mathbb{R} \). When $\S$ is an  infinite set,  (\hyperlink{B1}{B1}) ensures that $\mathfrak{m}\left(U_{k}\right)$ is also bounded. Without loss generality, we assume that $\mathfrak{m}\left(U_{k}\right)$ is bounded above by $M$, whether $\mathbb{S}$ is a finite set or not.

	Let \( \xi_{n}^{(k)}, k, n=1,2, \ldots \), be \( U_{k} \)-valued random variables with
	$\mathbb{P}(\xi_{n}^{(k)} \in \mathrm{d} x)=\mathfrak{m}(\d x)/\mathfrak{m}(U_{k}),
	$ and \( \tau_{n}^{(k)}, k, n \geq 1 \), be non-negative random variables satisfying \( \mathbb{P}(\tau_{n}^{(k)}>t)=\exp\{-\mathfrak{m}\left(U_{k}\right)t\} \), \( t \geq 0 \). Suppose that \( \{\xi_{n}^{(k)}, \tau_{n}^{(k)}\}_{k, n \geq 1} \) are all mutually independent. Put
	\[
	\zeta_{0}^{(k)}=0, \quad k \geq 1;\quad \zeta_{n}^{(k)}=\tau_{1}^{(k)}+\cdots+\tau_{n}^{(k)}, \quad k,n  \geq 1.
	\]
	Let
	\[
	D_{p}=\bigcup_{k \geq 1} \bigcup_{n \geq 0}\left\{\zeta_{n}^{(k)}\right\}\text{ and }p(\zeta_{n}^{(k)})=\xi_{n}^{(k)}\quad k,n  \geq 1.
	\]
	Correspondingly, put	\begin{equation}\label{eq:bujini}
		N([0, t] \times A)=\#\left\{ s \in D_{p}:0< s \leq t, p(s) \in A\right\}, t>0, A \in \mathscr{B}([0, \infty)) .
	\end{equation}
	As a consequence, we derive a Poisson point process \( (p(t))_{t\geq 0} \) and a Poisson random measure \( {N}(\mathrm{d} t, \mathrm{d} u) \) with intensity \( \mathfrak{m}(\mathrm{d} u)\mathrm{d} t  \). Moreover, we know that \( \mathfrak{m}(\d u) \d t\) is the compensator of  $N(\d t, \d u)$. Define a function $h: \mathbb{S} \times\left[0, M\right]\to \mathbb{R}$ by
	\begin{equation}\label{EQ:0916:01}
	    h(k, u)=\sum_{l \in \mathbb{S}}(l-k) \1_{\triangle_{k l}}(u).
	\end{equation}
 Then, $(\Lambda(t))_{t\geq 0}$ can be reformulated by the following SDE,
 \begin{equation}\label{Eq:1026:1}
     \begin{split}
		\d \Lambda(t) & =\int_{[0, M]} h(\Lambda(t-), u) N(\d t, \d u). \\
\end{split}
 \end{equation}

\begin{theorem}\label{Eq:1026:2}
    Let (\hyperlink{(A1)}{A1})-(\hyperlink{(A3)}{A3}) hold. If   
 $\S$ is an infinite set, we need to additionally assume that  (\hyperlink{B1}{B1}) holds. Then for any initial data $(\varphi,i)\in \mathscr{C}_r\times \S$, the FSPDEwM \eqref{EQ:FRSPDE:01}  admits a unique  mild solution. 
\end{theorem}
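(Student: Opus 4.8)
The plan is to use the Skorokhod representation \eqref{Eq:1026:1} to reduce the switched equation to a sequence of FSPDEs \emph{without} switching, solved on the successive intervals on which $(\Lambda(t))_{t\ge0}$ is constant, and then glue the pieces together. The key structural input is the well-posedness of the no-switching equation \eqref{EQ:GENERAL:01} for each frozen mode $k\in\S$ under (A1)--(A3): for fixed $k$, (A1) supplies the contraction semigroup $(\e^{tA(k)})_{t\ge0}$, (A2) is the one-sided dissipativity of $b(\cdot,k)$, and (A3) is the global Lipschitz bound on $\sigma(\cdot,k)$, so the result established in the Appendix yields a unique continuous $H$-valued mild solution for any initial segment in $\mathscr{C}_r$. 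I would treat that result as a black box in the present argument.

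First I would record the jump structure of $\Lambda$. Since $N$ has intensity $\mathfrak m(\d u)\,\d t$ and $\mathfrak m(U_k)\le M$ for every $k$ (automatic when $N<\infty$, and guaranteed by (B1) when $N=\infty$), the holding times of $(\Lambda(t))_{t\ge0}$ stochastically dominate i.i.d.\ rate-$M$ exponentials, so the jump times $0=\sigma_0<\sigma_1<\sigma_2<\cdots$ satisfy $\sigma_m\to\infty$ a.s.; in particular $\Lambda$ makes only finitely many jumps on any compact interval and does not explode. On $[\sigma_0,\sigma_1)=[0,\sigma_1)$ we have $\Lambda\equiv i$, so the equation is the no-switching FSPDE with mode $i$ and initial datum $\varphi$, and the Appendix result gives a unique mild solution there. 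Because the $X$-dynamics carry no jump term, $X$ is continuous at $\sigma_1$, so the segment $X_{\sigma_1}$ is a well-defined element of $\mathscr{C}_r$, and we may iterate: on $[\sigma_m,\sigma_{m+1})$ we freeze the mode at $k_m:=\Lambda(\sigma_m)$ and solve the no-switching equation with initial segment $X_{\sigma_m}$. Crucially, the Poisson measure $N$ (hence the whole trajectory of $\Lambda$ and the times $\sigma_m$) is independent of $W$, so conditionally on $\Lambda$ each interval really is driven by a genuine Wiener process with a $\mathscr{C}_r$-valued initial history, and the black-box result applies verbatim. Concatenating over $m$ and using $\sigma_m\to\infty$ produces a process $(X(t))_{t\ge0}$ defined for all time.

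It remains to check that the concatenated process is a mild solution of \eqref{EQ:FRSPDE:01} in the sense of Definition \ref{DEf:1103:1} and that it is unique. For existence I would use the evolution (cocycle) identity $S(s,t,\Lambda(\cdot))=S(u,t,\Lambda(\cdot))\,S(s,u,\Lambda(\cdot))$ for $s\le u\le t$, which is immediate from $S(s,t,\Lambda(\cdot))=\exp\{\int_s^t A(\Lambda(u))\,\d u\}$ and reduces to the ordinary semigroup $\e^{(t-\sigma_m)A(k_m)}$ on each constant stretch; splitting the integrals in \eqref{EQ:MILD:12} along $\sigma_0<\sigma_1<\cdots<\sigma_m\le t$ and applying the interval-wise mild formula then verifies \eqref{EQ:MILD:12} by induction on $m$. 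For uniqueness, given any mild solution of \eqref{EQ:FRSPDE:01}, the same cocycle splitting shows that its restriction to each $[\sigma_m,\sigma_{m+1})$ satisfies the no-switching mild equation with initial segment $X_{\sigma_m}$, so the Appendix uniqueness forces agreement interval by interval, hence on all of $[0,\infty)$.

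The main obstacle is not the gluing but the underlying no-switching well-posedness under the merely \emph{dissipative} (one-sided Lipschitz) hypothesis (A2) together with the infinite delay encoded by $\rho\in\mathscr{P}_{2r}(\R^-)$: one cannot use a naive global contraction, and must instead combine a priori $L^2(\Omega;H)$ estimates (obtained from (A2)--(A3), the weighted norm $\|\cdot\|_r$, and the integrability $\rho^{(2r)}<\infty$ to absorb the history terms) with an approximation/monotonicity argument for existence and with the dissipativity for pathwise uniqueness. Care is also needed to confirm that the mild solution is continuous up to the jump times so that the restart segments genuinely lie in $\mathscr{C}_r$, and---in the countable case---that the non-accumulation supplied by (B1) is precisely what prevents the infinitely many modes from making $\Lambda$ explode before the $X$-construction can be carried to completion.
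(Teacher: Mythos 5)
Your proposal is correct and follows essentially the same route as the paper: decompose along the jump times of $\Lambda$ (via the Skorokhod representation), solve the frozen-mode equation on each interval using the Appendix well-posedness result (Lemma \ref{Lem:1024:1}) as a black box, glue the pieces through the composition property of $S(s,t,\Lambda(\cdot))$, and conclude with non-explosion of the jump times. The only cosmetic difference is your non-explosion argument (holding times stochastically dominating rate-$M$ exponentials) versus the paper's Poisson counting bound $\mathbb{P}\{\tau_n\leq T\}\leq\sum_{k=n}^{\infty}\e^{-MT}(MT)^k/k!$, which are equivalent consequences of $\mathfrak{m}(U_k)\leq M$.
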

\begin{proof}
    We use the successive construction method to prove the existence and uniqueness of mild solutions.  Recall that $\mathfrak{m}\left(U_{k}\right)$ is  bounded. Hence, almost every sample path of $(\Lambda(t))_{t\geq 0}$ is a right continuous step function with a finite number of simple jumps on $[0,M]$. So there is a family of non-decreasing stopping times $\{\tau_n\}_{n\geq 0}$ defined by
\begin{equation*}
        \tau_0:=0,\;
        \tau_{n+1}:=\inf\{t\geq \tau_{n}: \Lambda(t)\neq \Lambda(\tau_{n})\} \quad \text { for any } n \geq 0.
\end{equation*}
We can see that $\Lambda(t)$ is a random constant on every interval $[\tau_n,\tau_{n+1}),$  that is, for every $n\geq 0$, 
\[
\Lambda(t)=\Lambda(\tau_n),\quad \tau_n\leq t<\tau_{n+1}.
\]
First, we consider the FSPDEwM \eqref{EQ:FRSPDE:01} on $t\in [0,\tau_1)$, which becomes
\begin{equation}\label{EQ:MILD:14}
   \begin{cases}
        \d X(t)=\left[A(i) X(t)+b\left(X_{t}, i\right)\right] \d t+\sigma\left(X_{t}, i\right) \d W(t).\\
        X_0=\varphi\in\mathscr{C}_r.
   \end{cases}
\end{equation}
It follows from Lemma \ref{Lem:1024:1} that Eq.\ \eqref{EQ:MILD:14}  admits a unique mild solution, i.e., there exists a unique continuous adapted process $X^{(0)}(t)$ for $t\in[0,\tau_1)$ on $H$ such that 
\begin{equation*}
    \begin{aligned}
X^{(0)}(t)&=\e^{tA(i)}\varphi(0)  +\int_{0}^{t} \e^{(t-s)A(i)} b(X_s^{(0)},i) \mathrm{d} s +\int_{0}^{t} \e^{(t-s)A(i)} \sigma(X_s^{(0)},i) \mathrm{d} W(s) \quad \P \text {-a.s.}
\end{aligned}
\end{equation*}
Next, consider the FSPDEwM \eqref{EQ:FRSPDE:01} on \( t \in [\tau_{1}, \tau_{2})\), which becomes
\begin{equation}\label{EQ:MILD:15}
        \d X(t)=\left[A(\Lambda(\tau_1)) X(t)+b\left(X_{t}, \Lambda(\tau_1)\right)\right] \d t+\sigma\left(X_{t}, \Lambda(\tau_1)\right) \d W(t).
\end{equation}
Similarly, Eq.\ \eqref{EQ:MILD:15} admits a unique mild solution $X^{(1)}(t)$  for \( t \in\left[\tau_{1}, \tau_{2}\right) \), such that 
\begin{equation*}
    \begin{aligned}
X^{(1)}(t)&=\e^{(t-\tau_1)A(\Lambda(\tau_1))}X^{(0)}(\tau_1)  +\int_{\tau_1}^{t} \e^{(t-s)A(\Lambda(\tau_1))} b(X_s^{(1)},\Lambda(\tau_1)) \mathrm{d} s \\
 &\quad+\int_{\tau_1}^{t} \e^{(t-s)A(\Lambda(\tau_1))} \sigma(X_s^{(1)},\Lambda(\tau_1)) \mathrm{d} W(s) \quad \P \text {-a.s.}
\end{aligned}
\end{equation*}
Repeating this procedure,  consider the FSPDEwM \eqref{EQ:FRSPDE:01} on \( t \in [\tau_{n}, \tau_{n+1}\) ), which becomes
\begin{equation}\label{EQ:MILD:15555}
        \d X(t)=\left[A(\Lambda(\tau_n)) X(t)+b\left(X_{t}, \Lambda(\tau_n)\right)\right] \d t+\sigma\left(X_{t}, \Lambda(\tau_n)\right) \d W(t).
\end{equation}
Eq.\ \eqref{EQ:MILD:15555} admits a unique mild solution $X^{(n)}(t)$  for \( t \in\left[\tau_{n}, \tau_{n+1}\right) \), such that 
\begin{equation*}
    \begin{aligned}
X^{(n)}(t)&=\e^{(t-\tau_n)A(\Lambda(\tau_n))}X^{(n-1)}(\tau_n)  +\int_{\tau_n}^{t} \e^{(t-s)A(\Lambda(\tau_n))} b(X_s^{(n)},\Lambda(\tau_n)) \mathrm{d} s \\
 &\quad+\int_{\tau_n}^{t} \e^{(t-s)A(\Lambda(\tau_n))} \sigma(X_s^{(n)},\Lambda(\tau_n)) \mathrm{d} W(s) \quad \P \text {-a.s.}
\end{aligned}
\end{equation*}
Then the mild solution $X(t)$ of the FSPDEwM \eqref{EQ:FRSPDE:01} can be expressed by 
\begin{equation*}
  X(t)= X^{(n)}(t), \quad \text{ if } t\in   [\tau_{n}, \tau_{n+1}).
\end{equation*}
Thus, $X (t)$ is determined uniquely on the time interval $[0, \tau_n )$ for every $n$.  By the continuity of $X^{(n)}(t)$ and the definition of $X(t)$, it is obvious that $X(t)$ is  continuous almost surely on $[0,\tau_{\infty})$, where $\tau_{\infty}:=\lim_{n\to\infty}\tau_n$. In what follows, we shall show that $X(t)$ satisfies Eq. \eqref{EQ:MILD:12}. Actually, for any $ t\in   [\tau_{n}, \tau_{n+1})$, we have
    \begin{align*}
        X(t)&=X^{(n)}(t)\\
        &=\e^{(t-\tau_n)A(\Lambda(\tau_n))}\Big(\e^{(\tau_n-\tau_{n-1})A(\Lambda(\tau_{n-1}))}X^{(n-2)}(\tau_{n-1})  \\
 &\qquad +\int_{\tau_{n-1}}^{\tau_n} \e^{(\tau_n-s)A(\Lambda(\tau_{n-1}))} b(X_s^{(n-1)},\Lambda(\tau_{n-1})) \mathrm{d} s\\
 &\qquad+\int_{\tau_{n-1}}^{\tau_n} \e^{(\tau_n-s)A(\Lambda(\tau_{n-1}))} \sigma(X_s^{(n-1)},\Lambda(\tau_{n-1})) \mathrm{d} W(s) \Big)\\
 &\qquad+\int_{\tau_n}^{t} \e^{(t-s)A(\Lambda(\tau_n))} b(X_s^{(n)},\Lambda(\tau_n)) \mathrm{d} s +\int_{\tau_n}^{t} \e^{(t-s)A(\Lambda(\tau_n))} \sigma(X_s^{(n)},\Lambda(\tau_n)) \mathrm{d} W(s) \\
 &=\e^{\int_{\tau_{n-1}}^{t}A(\Lambda(\tau_r))\d r}X^{(n-2)}(\tau_{n-1})+\int_{\tau_{n-1}}^{t} \e^{\int_s^tA(\Lambda(r))\d r} b(X_s,\Lambda(s)) \mathrm{d} s \\
 &\qquad+\int_{\tau_{n-1}}^{t} \e^{\int_s^tA(\Lambda(r))\d r} \sigma(X_s,\Lambda(s)) \mathrm{d} W(s).
    \end{align*}
Repeating this procedure implies Eq. \eqref{EQ:MILD:12}.

Finally, we need to show $X (t)$ is a global mild solution. It is sufficient to prove that $\tau_{\infty}= \infty$,  $\P$-a.s. When $\S$ is a finite set, it is fulfilled by \cite[Proposition 2.1]{xi2009asymptotic}. When $\S$ is an infinite set, by  (\hyperlink{B1}{B1}), for any $T>0$,
	$$
	\begin{aligned}
		\mathbb{P}\left\{\tau_n \leq T\right\} &=\mathbb{P}\left\{\int_0^{T \wedge \tau_n} \int_{[0,M]} \1_{\left\{u \in\left[0, q_{\Lambda(s-)}\right)\right\}} N(\d s, \d u)=n\right\} \\
		& \leq \mathbb{P}\left\{\int_0^T \int_{[0,M]}  N(\d s, \d u) \geq n\right\} \\
		&=\sum_{k=n}^{\infty} \e^{-M T} \frac{(M T)^k}{k !} .
	\end{aligned}
	$$
It follows that $\mathbb{P}\left\{\tau_n \leq T\right\} \rightarrow 0$ as $n \rightarrow \infty$. As a result, we get our desired assertion. 
\end{proof}
Finally, we conclude this section with the following theorem.
\begin{theorem}\label{Thm:1105:1}
 \( \left(X_{t},\Lambda(t)\right)_{t \geq 0} \) is a time-homogeneous strong Markov process.
\end{theorem}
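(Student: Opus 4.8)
The plan is to establish the (simple) time-homogeneous Markov property first and then bootstrap it to the strong Markov property; in both steps the engine is the pathwise uniqueness supplied by Theorem~\ref{Eq:1026:2}, which forces a flow (cocycle) identity for the solution map. I work on the canonical space constructed in Section~\ref{sec:2} and use that the driving data are the cylindrical Wiener process $(W(t))_{t\ge0}$ and the Poisson random measure $N(\d t,\d u)$ of \eqref{Eq:1026:1}; these have stationary and independent increments, are mutually independent, and generate the filtration $\{\F_t\}_{t\ge0}$.

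First I would record a jointly measurable representation of the solution. From the successive construction in the proof of Theorem~\ref{Eq:1026:2}, for every $s\ge0$ the pair $(X_s(0,(\varphi,i)),\Lambda(s;0,i))$ is, $\P$-a.s., a measurable functional
$$
(X_s,\Lambda(s))=F\bigl(s,(\varphi,i),(W,N)|_{[0,s]}\bigr)
$$
of the starting point $(\varphi,i)\in\mathscr C_r\times\S$ and of the driving noise restricted to $[0,s]$, where $F(s,\cdot,\cdot)\colon\mathscr C_r\times\S\times\Omega\to\mathscr C_r\times\S$ is jointly measurable and, by time-homogeneity of the coefficients and of the law of $(W,N)$, does not depend on the absolute starting time. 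Measurability in the initial datum is inherited from the Lipschitz and dissipativity estimates used to build the mild solution in the appendix, while continuity of $t\mapsto X(t)$ in $H$, combined with the defining norm $\|\cdot\|_r$, yields the a.s.\ right-continuity of $t\mapsto X_t$ in $\mathscr C_r$ that the limiting argument below will need.

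Next comes the flow identity at deterministic times. Fix $t\ge0$ and let $\theta_t$ denote the noise shift, $\theta_tW(\cdot)=W(t+\cdot)-W(t)$ and $\theta_tN((0,\cdot]\times A)=N((t,t+\cdot]\times A)$. Restarting \eqref{EQ:MILD:12} at time $t$ with initial segment $X_t$ and mode $\Lambda(t)$, driven by $\theta_t(W,N)$, produces a process that satisfies on $[t,t+s]$ the same mild equation as $X(\cdot;0,(\varphi,i))$ with the same value at time $t$; uniqueness in Theorem~\ref{Eq:1026:2} forces the two to coincide, so $(X_{t+s},\Lambda(t+s))=F(s,(X_t,\Lambda(t)),\theta_t(W,N))$. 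Since $(X_t,\Lambda(t))$ is $\F_t$-measurable whereas $\theta_t(W,N)$ is independent of $\F_t$ and equal in law to $(W,N)$, the freezing lemma for conditional expectations gives, for $f\in\mathscr B_b(\mathscr C_r\times\S)$,
$$
\E\bigl[f(X_{t+s},\Lambda(t+s))\mid\F_t\bigr]=\E\bigl[f(F(s,(\psi,j),(W,N)))\bigr]\big|_{(\psi,j)=(X_t,\Lambda(t))}=(P_sf)(X_t,\Lambda(t)),
$$
which is exactly the time-homogeneous Markov property, the semigroup identity $P_s\circ P_u=P_{s+u}$ following as already noted.

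Finally I would upgrade to the strong Markov property at a finite $\{\F_t\}$-stopping time $\tau$. The crucial analytic input is the strong Markov property of the driving data: $\theta_\tau(W,N)$ is independent of $\F_\tau$ and has the same law as $(W,N)$, combining Blumenthal's theorem for the cylindrical Wiener process with the strong Markov property of the Poisson random measure. Restarting \eqref{EQ:MILD:12} at the random time $\tau$ and invoking uniqueness again yields $(X_{\tau+s},\Lambda(\tau+s))=F(s,(X_\tau,\Lambda(\tau)),\theta_\tau(W,N))$ a.s., whence the same freezing-lemma computation gives $\E[f(X_{\tau+s},\Lambda(\tau+s))\mid\F_\tau]=(P_sf)(X_\tau,\Lambda(\tau))$. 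To make the restart rigorous I would approximate $\tau$ by the dyadic times $\tau_n=2^{-n}\lceil2^n\tau\rceil\downarrow\tau$, apply the already-proven simple Markov property on each of the countably many values of $\tau_n$, and pass to the limit using $\F_\tau\subset\F_{\tau_n}$ and the a.s.\ right-continuity of $t\mapsto(X_t,\Lambda(t))$ in the Polish space $(\mathscr C_r\times\S,d)$. I expect the main obstacle to be the rigorous justification of the flow identity and of the representation $F$ in the mild-solution/infinite-delay setting — specifically, controlling the restart of the stochastic convolution built from the random non-time-homogeneous semigroup $S(s,t,\Lambda(\cdot))$ across a (random) time interval that may contain jumps of $\Lambda$ — together with establishing the a.s.\ right-continuity of the segment process $t\mapsto X_t$ in $\mathscr C_r$, which is precisely what legitimizes the limit $\tau_n\downarrow\tau$.
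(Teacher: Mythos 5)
Your proposal is correct and follows essentially the same route as the paper: both represent the solution as a measurable functional of the initial datum and the driving pair $(W,N)$, restart the mild equation at the (stopping) time, invoke uniqueness of mild solutions to identify the restarted process with the solution started from $(X_\tau,\Lambda(\tau))$, and conclude by the freezing-lemma computation using that the post-$\tau$ noise increments are independent of $\mathscr{F}_\tau$ and stationary in law. The only real difference is organizational: the paper establishes time-homogeneity first and then argues directly at the stopping time via $\mathscr{G}_\tau=\sigma\{W(\tau+s)-W(\tau),\,N(\tau+s,U)-N(\tau,U):s\geq 0\}$, whereas you interpose a dyadic approximation $\tau_n\downarrow\tau$ (and the attendant right-continuity of $t\mapsto X_t$ in $\mathscr{C}_r$) that the paper dispenses with.
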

 \begin{proof}
Let us divide this proof into two steps.

\noindent Step 1 (Time-homogeneity): Let $\mathscr{B}(\mathscr{C}_r)$ denote the Borel $\sigma$-algebra generated by all open sets on $\mathscr{C}_r$.  According to the definition of transition probability, we have that for any $t,u\geq0$, \( B \in \mathscr{B}(\mathscr{C}_r)\) and $j\in\mathbb{S}$
\[
P_{u,t+u}((\varphi,i), B\times \{j\})=\mathbb{P}\left((X_{t+u}(u, (\varphi,i)),\Lambda(t+u;u,i) )\in B\times \{j\}\right),
\]
where \( (X_{t+u}(u, (\varphi,i)),\Lambda(t+u;u,i) )\) is determined by 
\begin{equation*}
    \begin{aligned}
X(t+u)&=\exp\left\{\int_{u}^{t+u}A(\Lambda(r))\d r\right\}\varphi(0)  +\int_{u}^{t+u} \exp\left\{\int_{s}^{t+u}A(\Lambda(r))\d r\right\}b(X_s,\Lambda(s)) \mathrm{d} s \\
 &\quad+\int_{u}^{t+u} \exp\left\{\int_{s}^{t+u}A(\Lambda(r))\d r\right\}  \sigma(X_s,\Lambda(s)) \mathrm{d} W(s),
\end{aligned}
\end{equation*}
and
\[
\Lambda(t+u)=i+\int_{u}^{t+u} \int_{[0,M]} h(\Lambda(s-), z) N(\d s, \d z).
\]
This is equivalent to
\begin{equation*}
    \begin{aligned}
X&(t+u)\\
&=\exp\left\{\int_{0}^{t}A(\Lambda(r+u))\d r\right\}\varphi(0)  +\int_{0}^{t} \exp\left\{\int_{s}^{t}A(\Lambda(r+u))\d r\right\}b(X_{s+u},\Lambda(s+u)) \mathrm{d} s \\
 &\quad+\int_{0}^{t} \exp\left\{\int_{s}^{t}A(\Lambda(r+u))\d r\right\}  \sigma(X_{s+u},\Lambda(s+u)) \mathrm{d} \widehat{W}(s),
\end{aligned}
\end{equation*}
and
\[
\Lambda(t+u)=i+\int_{0}^{t} \int_{[0,M]} h(\Lambda((s+u)-), z) \widehat{N}(\d s, \d z),
\]
where $\widehat{W}(s)=W(s+u)-W(u)$ and $\widehat{N}(s,U)= N(s+u, U)-N(u, U)$.  Note that $X_{t+u}$ completely depends on $X(t + u)$ and its history. Hence, by the definition of $(X_t(0,(\varphi,i)),\Lambda(t;0,i))$, the weak uniqueness implies  \( (X_{t+u}(u, (\varphi,i)),\Lambda(t+u;u,i) )\) and \((X_t(0,(\varphi,i)),\Lambda(t;0,i)) \) are identical in distribution. Consequently,
\[
\mathbb{P}\left((X_{t+u}(u, (\varphi,i)),\Lambda(t+u;u,i) )\in B\times \{j\}\right)=\mathbb{P}\left((X_{t}(0, (\varphi,i)),\Lambda(t;0,i) )\in B\times \{j\}\right),
\]
namely,
\[
P_{u,t+u}((\varphi,i), B\times \{j\})=P_{0,t}((\varphi,i), B\times \{j\}).
\]
\noindent Step 2 (Strong Markov property)  Let \( \tau>0 \) be any stopping time that is finite a.s. Note that \(( W(t) )\) and $(N(t,U))$ are strong Markov processes with independent increment. 
Let \( \mathscr{G}_{\tau}=\sigma\{W(\tau+s)-W(\tau), N(\tau+s, U)-N(\tau, U): s \geq 0, U \in \mathscr{B}([0,M])\} \). Clearly, \( \mathscr{G}_{\tau} \) is independent of \( \mathscr{F}_{\tau} \). 

Write $(X_t(0,(\varphi,i)),\Lambda(t;0,i))=(X_t,\Lambda(t))$ simply. According to the definition of \( X_{t} \), we have \( X_{t}(\theta)=X\big(t+\theta \big)\) 
for any  $\theta\leq 0$.  Moreover, for any \( t>0 \),
\begin{equation*}
    \begin{aligned}
X(t+\tau)&=S(\tau,t+\tau,\Lambda(\cdot)) X(\tau)  +\int_{\tau}^{t+\tau} S(s,t+\tau,\Lambda(\cdot)) b(X_s,\Lambda(s)) \mathrm{d} s \\
 &\quad+\int_{\tau}^{t+\tau} S(s,t+\tau,\Lambda(\cdot))  \sigma(X_s,\Lambda(s)) \mathrm{d} W(s),
\end{aligned}
\end{equation*}
and
\[
\Lambda(t+\tau)=\Lambda(\tau)+\int_{\tau}^{t+\tau} \int_{[0,M]} h(\Lambda(s-), u) N(\d s, \d u).
\]
Hence,  we obtain that \( \big(X_{t+\tau},\Lambda(t+\tau)\big)=\big(X_{t+\tau}(\tau,(X_{\tau},\Lambda(\tau))),\Lambda(t+\tau;\tau,\Lambda(\tau))\big) \).  Note that \(( X_{t+\tau}(\tau, (\x,k) ),\Lambda(t+\tau;\tau,k))\) depends completely on the increments \( W(\tau+s)-W(\tau), N(\tau+s, U)-N(\tau, U) \) and so is \( \mathscr{G}_{\tau} \)-measurable when \( (X_{\tau},\Lambda(\tau))=(\x,k) \) is given.  Now, for any \( B\in \mathscr{B}(\mathscr{C}_r)\) and $j\in\mathbb{S}$, we compute
\begin{align*}
    \mathbb{P}&\Big(\big(X_{t+\tau},\Lambda(t+\tau)\big) \in B\times\{j\} \mid \mathscr{F}_{\tau}\Big)\\
    &=\mathbb{E}\Big(\1_{B\times\{j\}}\big(X_{t+\tau},\Lambda(t+\tau)\big)\mid \mathscr{F}_{\tau}\Big) \\
&=\mathbb{E}\Big(\1_{B\times\{j\}}\big(X_{t+\tau}(\tau,(X_{\tau},\Lambda(\tau))),\Lambda(t+\tau;\tau,\Lambda(\tau))\big)  \mid \mathscr{F}_{\tau}\Big) \\
&=\mathbb{E}\Big(\1_{B\times\{j\}}\big(X_{t+\tau}(\tau,(\x,k))),\Lambda(t+\tau;\tau,k)\big)  \Big)|_{(\x,k)=(X_{\tau}, \Lambda(\tau))} \\
&=\mathbb{P}\Big(\big(X_{t+\tau}(\tau,(\x,k))),\Lambda(t+\tau;\tau,k)\big)\in B\times \{j\}\Big)|_{(\x,k)=(X_{\tau}, \Lambda(\tau))}\\
&=P_t((X_{\tau}, \Lambda(\tau)), B\times\{j\}),
\end{align*}
where we have used the time-homogeneity in the last equality. By the standard monotone class theorem and taking the conditional expectation with respect to the $\sigma$-algebra generated by $(X_{\tau}, \Lambda(\tau))$, the strong Markov property follows. The proof is complete.
\end{proof}
\begin{remark}
    It is worth pointing out that since $S(s,t,\Lambda(\cdot))$ is a random non-time-homogeneous semigroup, we cannot obtain the result that $(X_t)_{t\geq 0}$ is a time-homogeneous process. However, here we consider the system $(X_t,\Lambda(t))_{t\geq 0}$, whose time-homogeneity is derived from the weak uniqueness of mild solutions.
\end{remark}

\section{Case I: finite state space}\label{sec:4}
In this section, we consider the FSPDEwM \eqref{EQ:FRSPDE:01}  in a finite state space.  Just as stated in the abstract, we shall adopt the remote start method to prove Theorem \ref{THM:MAIN:01}. For this aim, it is necessary to appropriately construct a double-sided cylindrical Wiener process and a double-sided Poisson process.  To do so, we first give a  representation for the cylindrical Wiener process $(W(t))_{t\geq 0}$.  Let \( (e_{k})_{k \geq 1} \) be an orthonormal basis of  \( U\), and \( \left.\alpha_{k} \in( 0, \infty), k \geq 1\right. \), such that \( \sum_{k=1}^{\infty} \alpha_{k}^{2}<\infty \).  Define  the operator \( J: U\rightarrow U \) by
\[
J(u):=\sum_{k=1}^{\infty} \alpha_{k}\left\langle u, e_{k}\right\rangle_{U} e_{k}, \quad u \in U.
\]
According to \cite[Remark 2.5.1]{Liu_Rockner_Book_2015}, the operator \( J \) is  one-to-one and Hilbert-Schmidt. Moreover, $(W(t))_{t\geq 0}$ has a representation
\[
W(t) = \sum_{k=1}^{\infty} B_{k}(t) J(e_{k}), \quad t \geq 0,
\]
where \(\{(B_{k}(t))_{t\geq 0}\}_{k \geq 1}\) is a denumerable sequence of independent and identically distributed \(\mathbb{R}\)-valued standard Wiener processes defined on the probability space \((\Omega, \mathscr{F}, \left(\mathscr{F}_{t}\right)_{t \geq 0}, \mathbb{P})\).

Then, we construct the double-sided cylindrical Wiener process. Let $ \{(\widetilde{B}_k(t))_{t\geq 0}\}_{k\geq 1} $ be an independent copy of $ \{(B_k(t))_{t\geq 0}\}_{k\geq 1} $, and $\widetilde{W}(t)=\sum_{k=1}^{\infty} \widetilde{B}_{k}(t) J(e_{k}).$ Define  $(\overline{W}(t))_{t\in\mathbb{R}}$ as the following double-sided cylindrical Wiener process, 
\[
\overline{W}(t):=\left\{\begin{array}{ll}
	W(t), & t \geq 0, \\
	\widetilde{W}(-t), & t<0,
\end{array}\right.
\]
with its filtration 
\[
\overline{\mathscr{F}}_{t}:=\bigcap_{s>t} \overline{\mathscr{F}}_{s}^{0},
\]
where \( \overline{\mathscr{F}}_{s}^{0}:=\sigma\left(\left\{\overline{W}\left(r_{2}\right)-\overline{W}\left(r_{1}\right):-\infty<r_{1} \leq r_{2} \leq s\right\}, \mathscr{N}_1\right) \) and \( \mathscr{N}_1:=\{A \in \) \( \mathscr{F}_1 \mid \mathbb{P}_1(A)=0\} \).

Next, we use Skorokhod's representation  \eqref{Eq:1026:1} of $(\Lambda(t))_{t\geq 0}$  to construct the double-sided Poisson process.  Let \( \overline{N}(\cdot, \cdot) \) be an independent copy of \( N(\cdot, \cdot) \), and let \( N_{0}(\cdot, \cdot) \) be a double-sided Poisson process defined as
\[
N_{0}(t, \Gamma):=\left\{\begin{array}{ll}
	N(t, \Gamma), & t \geq 0, \\
	\overline{N}(-t, \Gamma), & t<0
\end{array}\right.
\]
for any \( \Gamma \in \mathscr{B}({[0,M]}) \) with its filtration 
\[
\overline{\mathscr{G}}_{t}:=\bigcap_{s>t} \overline{\mathscr{G}}_{s}^{0},
\]
where \( \overline{\mathscr{G}}_{s}^{0}:=\sigma\left(\left\{N_{0}\left(r_{2}, \Gamma\right)-N_{0}\left(r_{1}, \Gamma\right):-\infty<r_{1} \leq r_{2} \leq s, \Gamma\right\}, \mathscr{N}_2\right) \) and \( \mathscr{N}_2:=\{B \in \mathscr{F}_2 \mid \mathbb{P}_2(B)=0\} \).

Let $\mathscr{H}_t = \overline{\mathscr{F}}_{t}\bigotimes\overline{\mathscr{G}}_{t}, t\in \R.$ We shall work on this complete filtrated probability space $(\Omega,\mathscr{H},(\mathscr{H}_t)_{t\in \R},\P)$ in the rest of this paper.  Furthermore, for any fixed \( s \in \mathbb{R} \) and \( (\varphi,i) \in \mathscr{C}_r\times\mathbb{S} \), consider the following semi-linear FSPDEwM:
\begin{equation}\label{EQ:FRSPDE:02}
	\left\{\begin{array}{ll}
		{\d X(t)=\left\{A({\Lambda}(t)) X(t)+b\left(X_{t},{ \Lambda}(t)\right)\right\} \d t+\sigma\left(X_{t}, {\Lambda}(t)\right) \d \overline{W}(t), \quad X_{s}=\varphi ,} \\ 
		{\d  {\Lambda}(t)  =\mathlarger{\int}_{[0, M]} h( {\Lambda}(t-), u) N_0(\d t, \d u) } ,\quad  {\Lambda}(s) =i, \quad t\geq s.
	\end{array}\right.
\end{equation}
By a similar argument as in Theorem \ref{Eq:1026:2}, we can show that under (\hyperlink{(A1)}{A1})-(\hyperlink{(A3)}{A3}), the FSPDEwM \eqref{EQ:FRSPDE:02} admits a unique mild solution $(X (t ; s , (\varphi ,i)),\Lambda(t;s,i))_{t\geq s}$  with the initial data $(X_s,\Lambda(s) )= (\varphi,i)$. 

To prove the main theorem of this section, we proceed as follows. First, we establish the uniform boundedness of solution processes $(X(t))_{t\geq s}$ and the convergence for two processes $(X(t;s,(\varphi,i)))_{t\geq s}$ and $(X(t;s,(\psi,i)))_{t\geq s}$ from different initial data in the sense of $L^2(\Omega;H)$ in Subsection \ref{subsect:4.1}. Then, we further give the same results for the segment process $(X_t)_{t\geq s}$ in Subsection \ref{subsect:4.2}. Finally, we obtain exponential mixing by adopting the remote start method in Subsection \ref{subsect:4.3}. 
\subsection{Boundedness and Convergence of $X(t)$}\label{subsect:4.1}
\begin{lemma}\label{lem:yizhiyoujie}
Let (\hyperlink{(A1)}{A1})-(\hyperlink{(A3)}{A3}) hold.  Assume further that $$\mathcal{A}:=-\big(Q+\operatorname{diag}\left(\alpha(1)-2\lambda_1(1)+L, \ldots, \alpha(N)-2\lambda_1(N)+L\right)\big)$$ is a non-singular $M$-matrix, so $\xi=(\xi(1),\cdots,\xi(N))^{\top}:=\mathcal{A} ^{-1}\vec{1}\gg 0$, where $\vec{1}=(1,\cdots,1)^{\top}$.  If $$(\beta(k)+L)\xi(k) <1,\quad \text{for all } 1\leq k\leq N,$$ then, there exists a positive constant $\lambda$ such that  for any initial data $(\varphi,i) \in \mathscr{C}_{r}\times \S$ and $-\infty<s\leq t <\infty$,  
\begin{equation}\label{Eq:1015:1}
    \mathbb{E}[\left\|X(t;s,(\varphi,i))\right\|_{H}^{2}] \lesssim 1+\e^{-\lambda(t-s)}\|\varphi\|_{r}^{2}.
\end{equation}
\end{lemma}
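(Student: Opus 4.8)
The plan is to introduce the switching-weighted Lyapunov functional $V(\mathbf x,k):=\xi(k)\|\mathbf x(0)\|_H^2$ and turn the statement into an exponentially weighted Gronwall estimate for $\E\|X(t)\|_H^2$, with the $M$-matrix identity $\mathcal A\xi=\vec 1$ doing the essential bookkeeping. First I would apply It\^o's formula to $t\mapsto\xi(\Lambda(t))\|X(t)\|_H^2$ for the mild solution of \eqref{EQ:FRSPDE:02}. Because $A(\Lambda(t))X(t)$ need not be defined pointwise, this step is justified by the usual Yosida/Galerkin approximation on each interval $[\tau_n,\tau_{n+1})$, on which $A(\Lambda(\tau_n))$ is a fixed self-adjoint operator with discrete spectrum by (A1); one applies It\^o to the approximants and passes to the limit, while the jumps of $\Lambda$ contribute, through its generator, the term $\big(\sum_l q_{\Lambda(t)l}\xi(l)\big)\|X(t)\|_H^2$.

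Next I would estimate the drift. By (A1), $2\langle X(t),A(\Lambda(t))X(t)\rangle_H\le-2\lambda_1(\Lambda(t))\|X(t)\|_H^2$; applying (A2) with $\mathbf y=\mathbf 0$ bounds $2\langle X(t),b(X_t,\Lambda(t))\rangle_H$ by $\alpha(\Lambda(t))\|X(t)\|_H^2+\beta(\Lambda(t))\int_{-\infty}^0\|X(t+\theta)\|_H^2\rho(\d\theta)$ up to the inhomogeneous term $2\langle X(t),b(\mathbf 0,\Lambda(t))\rangle_H$, and (A3) with $\mathbf y=\mathbf 0$ bounds $\|\sigma(X_t,\Lambda(t))\|_{L_2}^2$ by $L\|X(t)\|_H^2+L\int_{-\infty}^0\|X(t+\theta)\|_H^2\rho(\d\theta)$ up to $\|\sigma(\mathbf 0,\Lambda(t))\|_{L_2}^2$. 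In the finite state space these inhomogeneous terms are uniformly bounded, so Young's inequality absorbs them into an additive constant $C_0$ at the price of an arbitrarily small increase of the $\|X(t)\|_H^2$-coefficient. The homogeneous coefficient is then $\xi(k)\big(\alpha(k)-2\lambda_1(k)+L\big)+\sum_l q_{kl}\xi(l)$, which by $\mathcal A\xi=\vec 1$ (Proposition \ref{prop:aini}) equals $-1$; since the degradation can be kept below $1-\kappa$, with $\kappa:=\max_{1\le k\le N}\xi(k)(\beta(k)+L)<1$ (this is exactly the hypothesis $(\beta(k)+L)\xi(k)<1$), taking expectations yields, for some $c\in(\kappa,1)$,
\[
\frac{\d}{\d t}\E\big[\xi(\Lambda(t))\|X(t)\|_H^2\big]\le -c\,\E\|X(t)\|_H^2+\kappa\,\E\!\int_{-\infty}^0\|X(t+\theta)\|_H^2\rho(\d\theta)+C_0 .
\]

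To extract exponential decay I would write $h(u)=\E\|X(u)\|_H^2$ (so $h(u)=\|\varphi(u-s)\|_H^2$ for $u\le s$) and $V(t)=\E[\xi(\Lambda(t))\|X(t)\|_H^2]\in[\xi_{min}h(t),\xi_{max}h(t)]$, multiply by $\e^{\lambda t}$ and integrate over $[s,T]$. Using $\lambda V(t)\le\lambda\xi_{max}h(t)$, the diffusion part contributes $-(c-\lambda\xi_{max})\int_s^T\e^{\lambda t}h(t)\,\d t$. In the delay term I swap the order of integration and substitute $v=t+\theta$, splitting the inner integral at $v=s$: the portion over $[s,T]$ gives $\kappa\rho^{(\lambda)}\int_s^T\e^{\lambda v}h(v)\,\d v$ with $\rho^{(\lambda)}\le\rho^{(2r)}<\infty$ (here $\lambda\le 2r$ and $\rho\in\mathscr{P}_{2r}(\R^-)$), while the history portion is controlled via $\|\varphi(v-s)\|_H^2\le\e^{-2r(v-s)}\|\varphi\|_r^2$ and contributes a term proportional to $\rho^{(2r)}\|\varphi\|_r^2\e^{\lambda s}$. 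The coefficient of $\int_s^T\e^{\lambda v}h(v)\,\d v$ is thus $-(c-\lambda\xi_{max})+\kappa\rho^{(\lambda)}$, which at $\lambda=0$ equals $-(c-\kappa)<0$; by continuity there is $\lambda\in(0,2r)$ with $\lambda\xi_{max}<c$ making it nonpositive. Discarding that term, bounding $V(s)=\xi(i)\|\varphi(0)\|_H^2\le\xi_{max}\|\varphi\|_r^2$, dividing by $\e^{\lambda T}$ and using $h(T)\le V(T)/\xi_{min}$ then gives \eqref{Eq:1015:1}.

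I expect two places to require the most care. The first is the rigorous justification of It\^o's formula for the mild (not strong) solution across the random switching times; the approximation argument must be arranged so that the error terms vanish uniformly, which is where the self-adjointness and discrete spectrum in (A1) enter. The second, and conceptually decisive, is closing the infinite-delay inequality uniformly in $t$: it is precisely the exponential-moment condition $\rho\in\mathscr{P}_{2r}(\R^-)$ that renders both the tail of the history integral and $\rho^{(\lambda)}$ finite, and the strict contraction $\kappa<1$ that leaves a genuinely negative feedback coefficient after the $\lambda$-perturbation, producing the uniform-in-time estimate rather than one that degrades as $t-s\to\infty$.
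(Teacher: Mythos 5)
Your proposal is correct and follows essentially the same route as the paper's proof: the weighted Lyapunov functional $\xi(\Lambda(t))\|X(t)\|_H^2$ treated by the generalized It\^o formula, the drift/diffusion bounds from (A1)--(A3) at $\mathbf{y}=\mathbf{0}$ with Young absorption of the inhomogeneous terms, the $M$-matrix identity $\mathcal{A}\xi=\vec{1}$ producing the coefficient $-1$, the splitting of the infinite-delay integral into history and post-start parts controlled by $\rho^{(2r)}$ and $\rho^{(\lambda)}$, and the choice of small $\varepsilon$ and $\lambda\in(0,2r)$ exploiting $\max_k(\beta(k)+L)\xi(k)<1$. The only difference is presentational (differential inequality then integration versus It\^o applied directly to $\e^{\lambda t}\xi(\Lambda(t))\|X(t)\|_H^2$), and your remark on justifying It\^o's formula for mild solutions by Galerkin/Yosida approximation is in fact more careful than the paper, which simply cites the generalized It\^o formula.
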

\begin{proof}
In the sequel,  write $X(t;s,(\varphi,i))=X(t)$   simply.  For any $\theta \in(-\infty, 0]$, let $\textbf{0}(\theta) \equiv \mathbf{0}_H$.   Hence, it follows from \eqref{Eq:1},  \eqref{eq:shoushi},  (\hyperlink{(A2)}{A2})  and  (\hyperlink{(A3)}{A3})  that for any $\varepsilon_1$,  $\varepsilon_2>0$, 
\begin{align*}
		 2\langle&\x(0), A(k)\x(0)+b(\x, k)\rangle_{H}+\| \sigma(\x, k)\|^2_{L_2}\\
   &\leq -2\lambda_1(k)\|\x(0)\|_{H}^{2}+2\langle\x(0)-\mathbf{0}_H, b(\x, k)-b(\mathbf{0},k)+b(\mathbf{0},k)\rangle_{H}\\
   &\quad+\| \sigma(\x, k)-\sigma(\mathbf{0}, k)+\sigma(\mathbf{0}, k)\|^2_{L_2}\\
   &\leq (\alpha(k)-2\lambda_1(k))\|\x(0)\|_{H}^{2}+\beta(k)\int_{-\infty}^{0} \| \x(\theta)\|_{H}^{2} \rho(\d \theta)+\varepsilon_1\|\x(0)\|_H^2+\frac{1}{\varepsilon_1}\|b(\mathbf{0},k)\|_{H}^{2}\\
   &\quad+(1+\varepsilon_2)\| \sigma(\x, k)-\sigma(\mathbf{0}, k)\|^2_{L_2}+\left(1+\frac{1}{\varepsilon_2}\right)\|\sigma(\mathbf{0}, k)\|_{L_2}^2\\
   &\leq \big(\alpha(k)-2\lambda_1(k)+\varepsilon_1+(1+\varepsilon_2)L\big)\|\x(0)\|_{H}^{2}+\big(\beta(k)+(1+\varepsilon_2)L\big)\int_{-\infty}^{0} \| \x(\theta)\|_{H}^{2} \rho(\d \theta)\\
   &\quad+\frac{1}{\varepsilon_1}\|b(\mathbf{0},k)\|_{H}^{2}+\left(1+\frac{1}{\varepsilon_2}\right)\|\sigma(\mathbf{0}, k)\|_{L_2}^2.
   \end{align*}
For any  $\varepsilon>0$, letting $\varepsilon_1=\varepsilon$ and $\varepsilon_2=\varepsilon/L$ implies that there exists a constant $C({\varepsilon})>0$ such that
\begin{equation}\label{Eq:1012:1}
    \begin{aligned} 
		 2\langle&\x(0), A(k)\x(0)+b(\x, k)\rangle_{H}+\| \sigma(\x, k)\|^2_{L_2}\\
   &\leq C({\varepsilon})+(\alpha(k)-2\lambda_1(k)+L+2\varepsilon)\|\x(0)\|_{H}^{2}+(\beta(k) +L+\varepsilon)\int_{-\infty}^{0} \| \x(\theta)\|_{H}^{2} \rho(\d \theta),
\end{aligned}
\end{equation}
where we have used the fact that $N<\infty$. 

Noting that $\mathcal{A}$ is a non-singular $M$-matrix, we have  for any $k\in \S$
\[
(Q\xi)(k)+(\alpha(k)-2\lambda_1(k)+L)\xi(k)=-1
\]
by the definition of $\xi$. Let $\lambda\in (0,2r)$ be a constant  to be determined later. By \eqref{Eq:1012:1} and the generalized \ito~ formula (see, e.g., \cite[Theorem 1.45]{Mao_Yuan_Book_2006}),  we can derive that 
\begin{align*}
		\e ^{\lambda t} &\E\left [\|X(t)\|_H^2\xi(\Lambda(t))\right] \\
 & =\e ^{\lambda s}\E[\|X(s)\|_H^2\xi(\Lambda(s))] +\E \int_{s}^{t}\e ^{\lambda u} \Big[ \lambda\|X(u)\|_H^2\xi(\Lambda(u)) + \|X(u)\|_H^2(Q\xi)(\Lambda(u))\\
 &\quad+\big(2\langle X(u),A(\Lambda(u))X(u)+b(X_u,\Lambda(u))\rangle +\|\sigma(X_u ,\Lambda (u))\|^2_{L_2}\big)\xi(\Lambda(u))\Big]\d u \\
 &\leq \e ^{\lambda s}\E[\|X(s)\|_H^2\xi(\Lambda(s))] +\frac{C(\varepsilon)\xi_{max}}{\lambda}\e^{\lambda t}+\lambda\xi_{max} \E \int_{s}^{t}\e ^{\lambda u}  \|X(u)\|_H^2\d u\\
 &\quad+\E \int_{s}^{t}\e ^{\lambda u}  \big( (Q\xi)(\Lambda(u))+(\alpha(\Lambda(u))-2\lambda_1(\Lambda(u))+L+2\varepsilon)\xi(\Lambda(u))\big)\|X(u)\|_H^2\d u \\
 &\quad+\E \int_{s}^{t}\int_{-\infty}^0\e ^{\lambda u}(\beta(\Lambda(u))+L+\varepsilon)\xi(\Lambda(u))\|X(u+\theta)\|_H^2\rho(\d \theta)\d u\\
 &\leq \xi_{max}\e ^{\lambda s}\|\varphi\|_r^2+\frac{C(\varepsilon)\xi_{max}}{\lambda}\e^{\lambda t}+((\lambda+2\varepsilon)\xi_{max}-1)\E \int_{s}^{t}\e ^{\lambda u}\|X(u)\|_H^2\d u\\
 &\quad+(K+\varepsilon\xi_{max})\E \int_{s}^{t}\int_{-\infty}^0\e ^{\lambda u}\|X(u+\theta)\|_H^2\rho(\d \theta)\d u
\end{align*}
where $K=\max_{1\leq k\leq N}(\beta(k)+L)\xi(k)$. Moreover,  $X_s=\varphi$ implies that $X(t)=\varphi(t-s)$ for any $t\leq s$.  Thus,
    \begin{align}\label{Eq:6}
   \nonumber      \int_{s}^{t}&\int_{-\infty}^0\e ^{\lambda u}\|X(u+\theta)\|_H^2\rho(\d \theta)\d u\\
      \nonumber  &=  \int_{s}^{t}\int_{-\infty}^{s-u}\e ^{\lambda u}\|X(u+\theta)\|_H^2\rho(\d \theta)\d u+\int_{s}^{t}\int_{s-u}^{0}\e ^{\lambda u}\|X(u+\theta)\|_H^2\rho(\d \theta)\d u\\
     \nonumber   &\leq  \int_{s}^{t}\int_{-\infty}^{s-u}\e ^{\lambda u-2r(u-s)}\e^{-2r\theta}\e^{2r(u+\theta-s)}\|\varphi(u+\theta-s)\|_H^2\rho(\d\theta)\d u\\
        &\quad+\int_{s-t}^{0}\int_{s-\theta}^{t}\e ^{\lambda u}\|X(u+\theta)\|_H^2\d u\rho(\d \theta)\\
      \nonumber  &\leq \rho^{(2r)}\|\varphi\|_r^2\e^{2rs}\int_{s}^{t}\e^{(\lambda-2r)u}\d u+\int_{s-t}^{0}\int_{s}^{t+\theta}\e ^{\lambda (v-\theta)}\|X(v)\|_H^2\d v\rho(\d \theta)\\
      \nonumber  &\leq \frac{\rho^{(2r)}}{2r-\lambda}\e^{\lambda s}\|\varphi\|_r^2+\int_{-\infty}^{0}\e ^{-\lambda \theta}\rho(\d \theta)\int_{s}^{t}\e ^{\lambda u}\|X(u)\|_H^2\d u.
    \end{align}
Furhermore,
\begin{equation}\label{Eq:1014:1}
    \begin{aligned}
        \e ^{\lambda t} &\E\left [\|X(t)\|_H^2\xi(\Lambda(t))\right] \leq \widetilde{C}(\varepsilon) \e ^{\lambda s}\|\varphi\|_r^2+\frac{C(\varepsilon)\xi_{max}}{\lambda}\e^{\lambda t}+K_1(\lambda)\E \int_{s}^{t}\e ^{\lambda u}\|X(u)\|_H^2\d u
    \end{aligned}
\end{equation}
where $\widetilde{C}(\varepsilon)=\left(1+\frac{\rho^{(2r)}(\beta_{max}+L+\varepsilon)}{2r-\lambda}\right)\xi_{max}$ and 
\[
K_1(\lambda)=(\lambda+2\varepsilon)\xi_{max}-1+(K+\varepsilon\xi_{max})\int_{-\infty}^{0}\e ^{-\lambda \theta}\rho(\d \theta).
\]
Since $(\beta(k)+L)\xi(k)<1$ for any $1\leq k\leq N$, then $K<1$. Moreover, $K_1(0)=3\varepsilon\xi_{max}-1+K$, so we can choose $\varepsilon>0$ and $\lambda\in (0,2r)$ to be sufficiently small such that $K_1(\lambda)\leq 0$, which, together with \eqref{Eq:1014:1},  implies 
 \begin{equation*}
     \E[\|X(t)\|_H^2]\leq \frac{\E[\|X(t)\|_H^2\xi(\Lambda(t))]}{\xi_{min}}\lesssim 1+\e^{-\lambda(t-s)}\|\varphi\|_r^2,
 \end{equation*}
for any $t\geq s$. The proof is complete.
\end{proof}
\begin{lemma}\label{Lem:1013:1}
   Let assumptions of Lemma \ref{lem:yizhiyoujie} be satisfied. There exists a positive constant $\hat{\lambda}$ such that  for  any initial data  $(\varphi,i) ,(\psi,i)\in \mathscr{C}_{r}\times \S$  and   $-\infty<s\leq t <\infty,$ 
    \begin{equation}\label{EQ:GAMMA:01}
    \mathbb{E}[\left\|X(t;s,(\varphi,i))-X(t; s,(\psi,i))\right\|_{H}^{2}] \lesssim\|\varphi-\psi\|_{r}^{2} \e^{-\hat{\lambda}\left(t-s\right)}.
\end{equation}

\end{lemma}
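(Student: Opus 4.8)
The plan is to reproduce, almost verbatim, the weighted energy argument of Lemma~\ref{lem:yizhiyoujie}, the decisive new ingredient being the observation that the two solutions are driven by the \emph{same} double-sided Poisson process $N_0$ and issue from the \emph{same} discrete state $i$. Since the switching equation $\d\Lambda(t)=\int_{[0,M]}h(\Lambda(t-),u)\,N_0(\d t,\d u)$ with $\Lambda(s)=i$ has a pathwise-unique solution, both processes share one common switching trajectory $\Lambda(\cdot)=\Lambda(\cdot;s,i)$. Writing $X_u^{\varphi}:=X_u(s,(\varphi,i))$, $X_u^{\psi}:=X_u(s,(\psi,i))$, the difference
\[
Z(t):=X(t;s,(\varphi,i))-X(t;s,(\psi,i))
\]
therefore solves, for this common $\Lambda(\cdot)$, the \emph{homogeneous} mild equation with initial datum $\varphi(0)-\psi(0)$, drift $b(X_u^{\varphi},\Lambda(u))-b(X_u^{\psi},\Lambda(u))$ and diffusion $\sigma(X_u^{\varphi},\Lambda(u))-\sigma(X_u^{\psi},\Lambda(u))$, propagated by the same random semigroup $S(\cdot,\cdot,\Lambda(\cdot))$. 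The absence of a genuine inhomogeneity is precisely what upgrades the additive bound \eqref{Eq:1015:1} to the multiplicative, exponentially decaying bound \eqref{EQ:GAMMA:01}.

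Concretely, I would apply the generalized \ito~formula to $\e^{\lambda t}\|Z(t)\|_H^2\,\xi(\Lambda(t))$ with the same Perron--Frobenius-type weight $\xi=\mathcal{A}^{-1}\vec{1}\gg 0$. For the quadratic form I combine (\hyperlink{A1}{A1}), (\hyperlink{A2}{A2}) and (\hyperlink{A3}{A3}) applied directly to the pair $(X_u^{\varphi},X_u^{\psi})$; because the differences $b(\mathbf{x},k)-b(\mathbf{y},k)$ and $\sigma(\mathbf{x},k)-\sigma(\mathbf{y},k)$ are now estimated without splitting off $b(\mathbf{0},k)$ and $\sigma(\mathbf{0},k)$, neither the constant $C(\varepsilon)$ nor the auxiliary $\varepsilon_1,\varepsilon_2$-decompositions of \eqref{Eq:1012:1} are needed, leaving the clean pointwise bound
\begin{align*}
&2\langle Z(u),A(\Lambda(u))Z(u)+b(X_u^{\varphi},\Lambda(u))-b(X_u^{\psi},\Lambda(u))\rangle_H+\|\sigma(X_u^{\varphi},\Lambda(u))-\sigma(X_u^{\psi},\Lambda(u))\|_{L_2}^2\\
&\qquad\le(\alpha(\Lambda(u))-2\lambda_1(\Lambda(u))+L)\|Z(u)\|_H^2+(\beta(\Lambda(u))+L)\int_{-\infty}^0\|Z(u+\theta)\|_H^2\rho(\d\theta).
\end{align*}
Inserting this into the \ito~expansion and invoking the $M$-matrix identity $(Q\xi)(k)+(\alpha(k)-2\lambda_1(k)+L)\xi(k)=-1$ collapses the leading drift exactly as before, leaving a running term with coefficient $(\lambda\xi_{max}-1)$ on $\int_s^t\e^{\lambda u}\|Z(u)\|_H^2\d u$ together with the $\rho$-weighted history term carrying the constant $K=\max_{1\le k\le N}(\beta(k)+L)\xi(k)$.

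Finally, the delay integral $\int_s^t\int_{-\infty}^0\e^{\lambda u}\|Z(u+\theta)\|_H^2\rho(\d\theta)\d u$ is handled by the same splitting and interchange of the order of integration as in \eqref{Eq:6}: for $u\le s$ one has $Z(u)=\varphi(u-s)-\psi(u-s)$, so the part with $u+\theta\le s$ together with $\rho\in\mathscr{P}_{2r}(\R^-)$ produces a boundary contribution $\lesssim\e^{\lambda s}\|\varphi-\psi\|_r^2$, while the part with $u+\theta\ge s$ reproduces $\int_{-\infty}^0\e^{-\lambda\theta}\rho(\d\theta)\int_s^t\e^{\lambda u}\|Z(u)\|_H^2\d u$. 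Collecting terms, the running coefficient multiplying $\int_s^t\e^{\lambda u}\|Z(u)\|_H^2\d u$ equals $K-1<0$ at $\lambda=0$ and is continuous in $\lambda$, so choosing $\lambda\in(0,2r)$ small makes it nonpositive; discarding that term yields $\e^{\lambda t}\mathbb{E}\big[\|Z(t)\|_H^2\xi(\Lambda(t))\big]\le\widetilde C\,\e^{\lambda s}\|\varphi-\psi\|_r^2$, and dividing by $\xi_{min}>0$ gives \eqref{EQ:GAMMA:01} with $\hat\lambda=\lambda$. The only genuinely delicate point is the history-term bookkeeping of \eqref{Eq:6} transplanted to $Z$ — identifying $Z$ with $\varphi-\psi$ on $(-\infty,s]$ and extracting the $\e^{\lambda s}\|\varphi-\psi\|_r^2$ boundary term with the correct $2r$-weight, and verifying that the strict inequality $K<1$ survives the $\lambda$-perturbation; everything else is a faithful repetition of Lemma~\ref{lem:yizhiyoujie}, with the conceptual crux being the shared-switching-path observation that removes the inhomogeneity.
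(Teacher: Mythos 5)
Your proposal is correct and follows essentially the same route as the paper: the same difference process $\Gamma(t)$ driven by the common switching path $\Lambda(\cdot;s,i)$, the same weighted functional $\e^{\hat\lambda t}\|\Gamma(t)\|_H^2\xi(\Lambda(t))$ with the $M$-matrix weight, the clean application of (\hyperlink{A2}{A2})--(\hyperlink{A3}{A3}) without splitting off $b(\mathbf{0},k)$, $\sigma(\mathbf{0},k)$, the delay-integral bookkeeping of \eqref{Eq:6}, and the smallness choice of $\hat\lambda$ from $K<1$. The only point the paper makes explicit that you leave implicit is the a priori finiteness $\mathbb{E}[\|\Gamma(t)\|_H^2]<\infty$ (supplied by Lemma \ref{lem:yizhiyoujie}) needed to justify the It\^{o}/martingale step, which is a one-line remark.
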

\begin{proof}
Set
\begin{equation}\label{EQ:1013:01}
    \Gamma(t):=X(t;s,(\varphi,i))-X(t;s,(\psi,i)).
\end{equation}
Then, it follows from  \eqref{EQ:FRSPDE:02} that 
\[
\begin{aligned}
\d \Gamma(t)= & A(\Lambda(t ; s, i)) \Gamma(t) \d t+\big(b\left(X_{t}(s,(\varphi, i)), \Lambda(t ; s, i)\right)-b\left(X_{t}(s,(\psi, i)),\Lambda(t ; s, i)) \right)\big)\d t\\
& +\big(\sigma\left(X_{t}(s,(\varphi, i)), \Lambda(t ; s, i)\right)-\sigma\left(X_{t}(s,(\psi, i)), \Lambda(t ; s, i)\right)\big) \d \overline{W}(t).
\end{aligned}
\]

For the sake of notational simplicity,  write $\Lambda(t ; s, i)=\Lambda(t)$. 
Let $\hat{\lambda}\in (0,2r)$ be a constant  to be determined later. Similar to \eqref{Eq:6}, we have
\begin{equation}
    \begin{aligned}
         \int_{s}^{t}&\int_{-\infty}^0\e ^{\hat{\lambda} u}\|\Gamma(u+\theta)\|_H^2\rho(\d \theta)\d u\\
        &=  \int_{s}^{t}\int_{-\infty}^{s-u}\e ^{\hat{\lambda} u}\|\Gamma(u+\theta)\|_H^2\rho(\d \theta)\d u+\int_{s}^{t}\int_{s-u}^{0}\e ^{\hat{\lambda} u}\|\Gamma(u+\theta)\|_H^2\rho(\d \theta)\d u\\
        &\leq \rho^{(2r)}\|\varphi-\psi\|_r^2\e^{2rs}\int_{s}^{t}\e^{(\hat{\lambda}-2r)u}\d u+\int_{s-t}^{0}\int_{s}^{t+\theta}\e ^{\hat{\lambda} (v-\theta)}\|\Gamma(v)\|_H^2\d v\rho(\d \theta)\\
        &\leq \frac{\rho^{(2r)}}{2r-\hat{\lambda}}\e^{\hat{\lambda} s}\|\varphi-\psi\|_r^2+\int_{-\infty}^{0}\e ^{-\hat{\lambda} \theta}\rho(\d \theta)\int_{s}^{t}\e ^{\hat{\lambda} u}\|\Gamma(u)\|_H^2\d u.
    \end{aligned}
\end{equation}
Lemma \ref{lem:yizhiyoujie} implies that $\E[\|\Gamma(t)\|_H^2]<\infty$ for any $t\geq 0$. Furthermore, by the generalized \ito~  formula, we have 
    \begin{align*}
        \e^{\hat{\lambda} t}&\E [\|\Gamma(t)\|_H^2\xi(\Lambda(t))] \\
        & = \e^{\hat{\lambda} s}\E[\|\Gamma(s)\|_H^2\xi(\Lambda(s))] +\E\int_{s}^t\e^{\hat{\lambda}u}\bigg[ \hat{\lambda}\|\Gamma(u)\|_H^2\xi(\Lambda(u))+\|\Gamma(u)\|_H^2(Q\xi)(\Lambda(u))\\
        & \quad +\Big( 2\langle \Gamma(u),A(\Lambda(u))\Gamma(u)+b(X_u(s,(\varphi,i)),\Lambda(u)) -b(X_u(s,(\psi,i)),\Lambda(u))\rangle \\
        &\quad +\|\sigma(X_u(s,(\varphi,i)),\Lambda(u))-\sigma(X_u(s,(\psi,i)),\Lambda(u))\|_{L_2}^2\Big) \xi(\Lambda(u)) \bigg] \d u\\
        & \leq \xi_{max}\e^{\hat{\lambda} s} \|\varphi-\psi\|_r^2 +\hat{\lambda}\xi_{max} \E \int_{s}^{t}\e ^{\hat{\lambda} u}  \|\Gamma(u)\|_H^2\d u\\
        &\quad+\E\int_{s}^t\e^{\hat{\lambda} u}\Big((Q\xi)(\Lambda(u))+\big(\alpha(\Lambda(u))-2\lambda_1(\Lambda(u))+L\big)\xi(\Lambda(u))\Big)\|\Gamma(u)\|_H^2\d u\\
        &\quad +K\E\int_{s}^t\int_{-\infty}^0\e^{\hat{\lambda} u}\|\Gamma(u+\theta)\|_H^2\rho(\d \theta)\d u\\
        & \leq \left(1+\frac{\rho^{(2r)}(\beta_{max}+L)}{2r-\hat{\lambda}}\right)\xi_{max}\e^{\hat{\lambda} s}\|\varphi-\psi\|_r^2+K_2(\hat{\lambda})\E\int_{s}^{t}\e ^{\hat{\lambda} u}\|\Gamma(u)\|_H^2\d u,
    \end{align*}
where$$K_2(\hat{\lambda}):=\hat{\lambda} \xi_{max}-1+K\int_{-\infty}^{0}\e^{\hat{\lambda}\theta}\rho(\d\theta).$$   Since $K<1$, then $K_2(0) =-1 +K<0$. So we can choose $\hat{\lambda}\in(0,2r) $ sufficiently small such that $K_2(\hat{\lambda})\leq 0.$ Thus,
\begin{equation*}
           \e^{\hat{\lambda} t}\E [\|\Gamma(t)\|_H^2\xi(\Lambda(t))] \lesssim \e^{\hat{\lambda} s}\|\varphi-\psi\|_r^2,       
\end{equation*}
which implies that
\[
\E[\|\Gamma(t)\|_H^2]\leq \frac{\E[\|\Gamma(t)\|_H^2\xi(\Lambda(t))]}{\xi_{min}}\lesssim \|\varphi-\psi\|_r^2\e^{-\hat{\lambda}(t-s)}.
\]
The proof is complete.
\end{proof}
\subsection{Boundedness and Convergence of $X_t$}\label{subsect:4.2}
\begin{theorem}
\label{Thm:4.3}
     Let assumptions of Lemma \ref{lem:yizhiyoujie} be satisfied.  For any initial data  $(\varphi,i)\in \mathscr{C}_{r}\times \S$  and $-\infty<s\leq t <\infty,$ we have
    \begin{equation}\label{EQ:1013:02}
    \mathbb{E}[\left\|X_{t}(s,(\varphi,i))\right\|_{r}^{2}] \lesssim 1+\e^{-\lambda(t-s)}\|\varphi\|_{r}^{2},
\end{equation}
where $\lambda$ is given in \eqref{Eq:1015:1}.
\end{theorem}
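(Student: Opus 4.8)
The plan is to transfer the pointwise-in-time estimate \eqref{Eq:1015:1} of Lemma \ref{lem:yizhiyoujie} to the supremum that defines the segment norm. Writing $v=t+\theta$ in $\|X_t\|_r^2=\sup_{\theta\leq 0}\e^{2r\theta}\|X(t+\theta)\|_H^2$, I would split according to whether $v$ lies in the solved regime or the prescribed history, using that $X_s=\varphi$ forces $X(v)=\varphi(v-s)$ for $v\leq s$:
\[
\|X_t(s,(\varphi,i))\|_r^2 = \max\Big\{\sup_{s \leq v \leq t}\e^{-2r(t-v)}\|X(v)\|_H^2,\ \sup_{v<s}\e^{-2r(t-v)}\|\varphi(v-s)\|_H^2\Big\}.
\]
The history part is deterministic: the substitution $w=v-s$ and the definition of $\|\cdot\|_r$ give the bound $\e^{-2r(t-s)}\|\varphi\|_r^2$, which is $\leq\e^{-\lambda(t-s)}\|\varphi\|_r^2$ because $\lambda\in(0,2r)$. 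Thus the whole difficulty concentrates in the current part.

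For the current part I would exploit the exponential weight to reduce to local-in-time supremum estimates. Partitioning $[s,t]$ into the unit intervals $I_n:=[(t-n-1)\vee s,\,t-n]$, on which $\e^{-2r(t-v)}\leq\e^{-2rn}$, I obtain
\[
\sup_{s\leq v\leq t}\e^{-2r(t-v)}\|X(v)\|_H^2 \leq \sum_{n\geq 0}\e^{-2rn}\sup_{v\in I_n}\|X(v)\|_H^2,
\]
so it suffices to control $\E[\sup_{v\in I_n}\|X(v)\|_H^2]$ and then sum the geometric series against $\e^{-2rn}$. Since $\lambda<2r$, this summation converges and reproduces a finite constant together with the factor $\e^{-\lambda(t-s)}\|\varphi\|_r^2$, by an estimate analogous to the one following \eqref{Eq:6}.

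The key and most delicate step is the unit-interval bound $\E[\sup_{v\in I_n}\|X(v)\|_H^2]\lesssim 1+\e^{-\lambda(a_n-s)}\|\varphi\|_r^2$, where $a_n$ is the left endpoint of $I_n$. The main obstacle is that assumption \hyperlink{A2}{(A2)} controls $b$ only through the monotonicity pairing and yields no norm bound on $b(\cdot,k)$, so the mild-solution representation \eqref{EQ:MILD:12} cannot be estimated term by term. Instead I would argue energetically: apply the generalized It\^o formula to $\|X(v)\|_H^2$ on $I_n$ (as in Lemma \ref{lem:yizhiyoujie}, cf.\ \cite[Theorem 1.45]{Mao_Yuan_Book_2006}), bound the combined drift-and-diffusion term by the dissipativity inequality \eqref{Eq:1012:1} with its coefficients dominated uniformly since $N<\infty$, and handle the stochastic integral by the Burkholder--Davis--Gundy inequality. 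Because \hyperlink{A3}{(A3)} does give linear growth of $\|\sigma(\cdot,k)\|_{L_2}$, the BDG term is $\E\big(\int_{I_n}\|X(u)\|_H^2\|\sigma\|_{L_2}^2\,\d u\big)^{1/2}$, which after Young's inequality splits into a small multiple of $\E\sup_{v\in I_n}\|X(v)\|_H^2$, absorbed into the left-hand side, plus pointwise $L^2$ integrals.

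What remains are $\E\|X(a_n)\|_H^2$ and the time integrals $\int_{I_n}\E\|X(u)\|_H^2\,\d u$ and $\int_{I_n}\int_{-\infty}^0\E\|X(u+\theta)\|_H^2\,\rho(\d\theta)\,\d u$. Each is estimated by Lemma \ref{lem:yizhiyoujie} for arguments $\geq s$, and by the history bound $\|\varphi(w)\|_H^2\leq\e^{-2rw}\|\varphi\|_r^2$ for arguments $<s$; the delay integral converges because $\rho\in\mathscr{P}_{2r}(\R^-)$ and $\lambda\leq 2r$ ensure $\int_{-\infty}^0\e^{-\lambda\theta}\rho(\d\theta)\leq\rho^{(2r)}<\infty$. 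Combining these unit-interval estimates with the geometric summation and the deterministic history part yields \eqref{EQ:1013:02}.
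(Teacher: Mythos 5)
Your proposal is correct in substance, but it organizes the key estimate differently from the paper. Both proofs start with the identical history/current splitting ($\E\|X_t\|_r^2\leq \e^{-2r(t-s)}\|\varphi\|_r^2+\e^{-2rt}\E\sup_{s<u\leq t}\e^{2ru}\|X(u)\|_H^2$), and both control the current part energetically — It\^o's formula for $\|X\|_H^2$, the dissipativity bound \eqref{Eq:1012:1}, BDG plus Young for the martingale, and the $\rho^{(2r)}$-device for the delay integral — since, as you correctly observe, \hyperlink{A2}{(A2)} gives no norm bound on $b$ and the mild formulation \eqref{EQ:MILD:12} cannot be estimated term by term. The difference is in how the supremum is handled: the paper performs a \emph{single} weighted estimate on all of $[s,t]$, obtaining
\begin{equation*}
\E\Big(\sup_{s<u\leq t}\e^{2ru}\|X(u)\|_H^2\Big)\lesssim \e^{2rt}+(1+(t-s))\e^{2rs}\|\varphi\|_r^2+\E\int_s^t\e^{2ru}\|X(u)\|_H^2\,\d u,
\end{equation*}
then invokes Lemma \ref{lem:yizhiyoujie} only for the last integral and removes the linear factor $(t-s)$ at the very end via $t\e^{-ct}\leq 1/(c\e)$; you instead partition $[s,t]$ into unit intervals, prove a uniform local bound $\E\sup_{v\in I_n}\|X(v)\|_H^2\lesssim 1+\e^{-\lambda(a_n-s)}\|\varphi\|_r^2$ (using Lemma \ref{lem:yizhiyoujie} pointwise at $a_n$ and inside the time and delay integrals), and sum the geometric series $\sum_n\e^{-2rn}(\cdots)$, which converges to the right bound precisely because $\lambda<2r$. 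Your route buys cleaner constants — no $(t-s)$ factor ever appears, so no final absorption trick is needed — at the price of the summation bookkeeping; the paper's route is one computation that directly reuses \eqref{Eq:6}.

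One technical point you should make explicit: the absorption of $\tfrac12\E\sup_{v\in I_n}\|X(v)\|_H^2$ into the left-hand side after Young's inequality is only legitimate once you know this quantity is finite, which is not available a priori. The paper handles this by localizing with the stopping times $\tau_n'=\inf\{t\geq 0:\|X(t)\|_H\geq n\}$, showing $\tau_n'\to\infty$ a.s.\ via Markov's inequality and Lemma \ref{lem:yizhiyoujie}, performing all estimates up to $t\wedge\tau_n'$, and passing to the limit by monotone convergence. The same localization must be inserted into your unit-interval estimates; with it, your argument is complete.
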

\begin{proof}
For the sake of convenience in notation, write $X_{t}(s,(\varphi,i))=X_{t}$ in this proof. By the definition of $\|\cdot\|_r$, we have
    \begin{equation}\label{Eq:1013:1}
        \begin{aligned}
\mathbb{E}[\left\|X_{t}\right\|_{r}^{2}]&=\mathbb{E}\left(\sup _{-\infty<\theta \leq 0} \e^{2 r \theta}\|X(t+\theta)\|_{H}^{2}\right)\\
&\leq \mathbb{E}\left(\sup _{-\infty<\theta \leq s-t} \e^{2 r \theta}\|X(t+\theta)\|_{H}^{2}\right)+\mathbb{E}\left(\sup _{s-t<\theta \leq 0} \e^{2 r \theta}\|X(t+\theta)\|_{H}^{2}\right)\\
&=\mathbb{E}\left(\sup _{-\infty<\theta \leq s-t} \e^{2 r \theta}\|\varphi(t+\theta-s)\|_{H}^{2}\right)
+\mathbb{E}\left(\sup_{s<u \leqslant t} \e^{2 r(u-t)}\|X(u)\|_{H}^{2}\right)\\
&=\mathbb{E}\left(\sup_{-\infty< \theta^{\prime} \leqslant 0} \e^{2r\left(\theta^{\prime}-t+s\right)}\left\|\varphi\left(\theta^{\prime}\right)\right\|_{H}^{2}\right)
+\e^{-2rt} \mathbb{E}\left(\sup _{s<u\leq t } \e^{2 r u}\|X(u)\|_{H}^{2}\right)\\
&=\e^{-2r(t-s)}\|\varphi\|_{r}^{2}+\e^{-2rt}\mathbb{E}\left(\sup _{s<u \leq t} \e^{2 r u}\|X(u)\|_{H}^{2}\right).
\end{aligned}
    \end{equation}
Then,    we shall evaluate the term $\mathbb{E}\left(\sup _{s<u \leq t} \e^{2 r u}\|X(u)\|_{H}^{2}\right)$. Applying \ito's formula to $\e^{2rt}\|X(t)\|_H^2,$ it follows that
\begin{align*}
        \e^{2rt}\|X(t)\|_H^2&=\e^{2 r s}\|X(s)\|_{H}^{2}+\int_{s}^{t} \e^{2 r u}\left(2 r\|X(u)\|_{H}^{2}+2\langle  X(u), A(\Lambda(u)) X(u)+b(X_u, \Lambda(u))\rangle\right. \\
& \quad+\left\|\sigma\left(X_{u}, \Lambda(u)\right)\right\|_{L_2}^{2}) \d u +2\int_{s}^{t} \e^{2 r u}\langle  X(u), \sigma(X_u, \Lambda(u)) \d \overline{W}(u)\rangle\\
        &\leq \e^{2rs}\|\varphi\|_r^2+\int_{s}^{t}\e^{2ru}\big[C(\varepsilon)+(2r+\alpha(\Lambda(u))-2\lambda_1(\Lambda(u))+L+2\varepsilon)\|X(u)\|_H^2\big]\d u\\
        &\quad +\int_{s}^{t}\e^{2ru}(\beta(\Lambda(u))+L+\varepsilon)\int_{-\infty}^{0}\|X(u+\theta)\|_H^2\rho(\d \theta)\d u\\
        &\quad+2\int_{s}^{t}\e^{2ru}\langle X(u),\sigma(X_u,\Lambda(u))\d \overline{W}(u)\rangle\\
        &\leq \e^{2rs}\|\varphi\|_r^2+\frac{C(\varepsilon)}{2r}\e^{2rt}+(2r+\alpha_{max}\vee 0+L+2\varepsilon)\int_{s}^{t}\e^{2ru}\|X(u)\|_H^2\d u\\
        &\quad +(\beta_{max}+L+\varepsilon)\int_{s}^{t}\int_{-\infty}^{0}\e^{2ru}\|X(u+\theta)\|_H^2\rho(\d \theta)\d u\\
        &\quad+2\int_{s}^{t}\e^{2ru}\langle X(u),\sigma(X_u,\Lambda(u))\d \overline{W}(u)\rangle,
    \end{align*}

where we have used \eqref{Eq:1012:1} in the first inequality.

Define a family of non-decreasing stopping times $\{\tau_n^{\prime}\}_{n\geq 1}$  by
\begin{equation*}
        \tau_{n}^{\prime}:=\inf\{t\geq 0:\|X(t)\|_H\geq n\}\quad \text { for any } n \geq 1.
\end{equation*}
We claim that $\tau_n^{\prime}$ tends to $\infty$ almost surely, as $n\to\infty$. In fact, for any given $T>0$, we have $\|X(\tau_n^{\prime}\wedge T)\|_H^2 \geq n^2$ on the set $\left\{\tau_n^{\prime} \leq T\right\}$. Therefore, it follows from Markov's inequality and Lemma \ref{lem:yizhiyoujie} that
\begin{equation}\label{EQ:1030:11}
    		\mathbb{P}\left\{\tau_n^{\prime} \leq T\right\}\leq \mathbb{P}\{\|X(\tau_n^{\prime}\wedge T)\|_H^2 \geq n^2\}\leq \frac{\mathbb{E}\|X(\tau_n^{\prime}\wedge T)\|_H^2 }{n^2}\lesssim \frac{1+\|\varphi\|_r^2}{n^2}\rightarrow 0, 
\end{equation}
as  $n \rightarrow \infty$. Similar to \eqref{Eq:1012:1}, we can deduce that
\begin{equation}\label{EQ:1020:09}
    \|\sigma({\bf x}, k)\|_{L_2}^{2} 
 \leq C(\varepsilon)+(L+\varepsilon)\|\x(0)\|_H^2+(L+\varepsilon) \int_{-\infty}^{0}\|{\bf x}(\theta)\|_{H}^{2} \rho(\d \theta).
\end{equation}
Thus, \eqref{EQ:1020:09}, BDG's inequality and Young's inequality, imply
\begin{equation}\label{Eq:1230}
    \begin{aligned}
           2\E&\left(\sup_{s<u\leq t\wedge\tau_{n}^{\prime}}\int_{s}^{u}\e^{2rv}\langle X(v),\sigma(X_v,\Lambda(v))\d \overline{W}(v)\rangle_H\right)\\
        &\leq 8\sqrt{2}\E\left(\int_{s}^{t\wedge\tau_{n}^{\prime}}\e^{4ru}\|X(u)\|_H^2\|\sigma(X_u,\Lambda(u))\|_{L_2}^2\d u\right)^{1/2}\\
        &\leq \frac{1}{2}\E\left(\sup_{s<u\leq t\wedge\tau_{n}^{\prime}}\e^{2ru}\|X(u)\|_H^2\right)+64\E\int_{s}^{t\wedge\tau_{n}^{\prime}}\e^{2ru}\|\sigma(X_u,\Lambda(u))\|_{L_2}^2\d u\\
      &  \leq \frac{1}{2}\E\left(\sup_{s<u\leq t\wedge\tau_{n}^{\prime}}\e^{2ru}\|X(u)\|_H^2\right)+\frac{32C(\varepsilon)}{r}\e^{2rt}+64(L+\varepsilon)\E\int_{s}^{t\wedge\tau_{n}^{\prime}}\e^{2ru}\|X(u)\|_H^2\d u\\
      &\quad+64(L+\varepsilon) \E\int_{s}^{t\wedge\tau_{n}^{\prime}}\int_{-\infty}^{0}\e^{2ru}\|X(u+\theta)\|_H^2\rho(\d \theta)\d u.
    \end{aligned}
\end{equation}
 Applying a similar argument as in \eqref{Eq:6}, we obtain
\begin{equation*}\label{EQ:1016:01}
\int_{s}^{t\wedge\tau_{n}^{\prime}}\int_{-\infty}^{0}\e^{2ru}\|X(u+\theta)\|_H^2\rho(\d \theta)\d u \leq  \rho^{(2r)}\e^{2rs}(t-s)\|\varphi\|_r^2+\rho^{(2r)}\int_{s}^{t\wedge\tau_{n}^{\prime}}\e^{2r u} \|X(u)\|_H^2\d u.
\end{equation*}
In summary, the calculations presented above yield
    \begin{align*}
        &\E\left(\sup_{s<u\leq t\wedge\tau_{n}^{\prime}}\e^{2ru}\|X(u)\|_H^2\right)\\
        &\quad \leq 2\e^{2rs}\|\varphi\|_r^2+\frac{65C(\varepsilon)}{r}\e^{2rt}+2(2r+\alpha_{max}\vee 0+65L+66\varepsilon)\E\int_{s}^{t\wedge\tau_{n}^{\prime}}\e^{2ru}\|X(u)\|_H^2\d u\\
        &\quad \quad+2(\beta_{max}+65L+65\varepsilon)\E\int_{s}^{t\wedge\tau_{n}^{\prime}}\int_{-\infty}^{0}\e^{2ru}\|X(u+\theta)\|_H^2\rho(\d \theta)\d u\\
        &\quad \lesssim  \e^{2rt}+(1+(t-s)) \e^{2rs}\|\varphi\|_r^2+\E\int_{s}^{t\wedge\tau_{n}^{\prime}}\e^{2ru}\|X(u)\|_H^2\d u.
    \end{align*}
Letting $n\to \infty$, the monotone convergence theorem implies 
\begin{equation}\label{Eq:1017:1}
        \begin{aligned}
        &\E\left(\sup_{s<u\leq t}\e^{2ru}\|X(u)\|_H^2\right) \lesssim  \e^{2rt}+(1+(t-s)) \e^{2rs}\|\varphi\|_r^2+\E\int_{s}^{t}\e^{2ru}\|X(u)\|_H^2\d u.
    \end{aligned}
    \end{equation}
    By inserting \eqref{Eq:1017:1} into \eqref{Eq:1013:1} and applying Lemma \ref{lem:yizhiyoujie}, we have
\[
\begin{aligned}
    \mathbb{E}[\left\|X_{t}\right\|_{r}^{2}]
    &\lesssim 1+\e^{-2r(t-s)}(2+(t-s)) \|\varphi\|_r^2+\e^{-2rt}\int_{s}^{t}\e^{2ru}\E\|X(u)\|_H^2\d u\\
    &\lesssim 1+\Big(\e^{-(2r-\lambda)(t-s)}(2+(t-s))\Big)\e^{-\lambda(t-s)}\|\varphi\|_r^2+\e^{-\lambda(t-s)}\|\varphi\|_r^2\\
&\lesssim 1+\e^{-\lambda(t-s)}\|\varphi\|_r^2,
\end{aligned}
\]
where we have used the fact that for any $c>0$ and $t>0$, $t\e^{-ct}\leq \frac{1}{c\e}$.  The proof is complete.
\end{proof}
\begin{theorem}
\label{Thm:4.4}
     Let assumptions of Lemma \ref{lem:yizhiyoujie} be satisfied.  For  any initial data  $(\varphi,i) ,(\psi,i)\in \mathscr{C}_{r}\times \S$  and   $-\infty<s\leq t <\infty,$ we have
    \begin{equation}\label{Eq:1018:1}
    \mathbb{E}[\left\|X_{t}(s,(\varphi,i))-X_{t}(s,(\psi,i))\right\|_{r}^{2}] \lesssim\|\varphi-\psi\|_{r}^{2} \e^{-\hat{\lambda}\left(t-s\right)},
\end{equation}
where $\hat{\lambda}$ is given in \eqref{EQ:GAMMA:01}.
\end{theorem}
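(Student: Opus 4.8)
The plan is to mirror the proof of Theorem~\ref{Thm:4.3} almost verbatim, replacing the solution $X$ by the difference process $\Gamma$ of \eqref{EQ:1013:01} and replacing the $H$-level \emph{boundedness} of Lemma~\ref{lem:yizhiyoujie} by the $H$-level \emph{convergence} of Lemma~\ref{Lem:1013:1}. First I would decompose the segment norm exactly as in \eqref{Eq:1013:1}: writing $\Gamma_t:=X_t(s,(\varphi,i))-X_t(s,(\psi,i))$ and splitting the supremum defining $\|\cdot\|_r$ at $\theta=s-t$, the history part ($\theta\le s-t$) equals $\varphi-\psi$ and, after the same change of variable as in \eqref{Eq:1013:1}, contributes exactly $\e^{-2r(t-s)}\|\varphi-\psi\|_r^2$, while the remaining part reduces to $\e^{-2rt}\,\E\big(\sup_{s<u\le t}\e^{2ru}\|\Gamma(u)\|_H^2\big)$. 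Thus the whole task is to bound this last supremum.

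To that end I would apply \ito's formula to $\e^{2rt}\|\Gamma(t)\|_H^2$, using the SDE for $\Gamma$ recorded in the proof of Lemma~\ref{Lem:1013:1}. Abbreviating the drift and diffusion differences by $b_\varphi-b_\psi$ and $\sigma_\varphi-\sigma_\psi$, the crucial pathwise bound is the difference analog of \eqref{Eq:1012:1}: since these differences carry \emph{no} constant offset, assumptions (\hyperlink{A2}{A2}) and (\hyperlink{A3}{A3}) yield directly
\[
\begin{aligned}
&2\langle\Gamma(u),A(\Lambda(u))\Gamma(u)+b_\varphi-b_\psi\rangle_H+\|\sigma_\varphi-\sigma_\psi\|_{L_2}^2\\
&\quad\le (2r+\alpha_{max}\vee0+L)\|\Gamma(u)\|_H^2+(\beta_{max}+L)\int_{-\infty}^0\|\Gamma(u+\theta)\|_H^2\rho(\d\theta),
\end{aligned}
\]
with no additive constant. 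Consequently the $\e^{2rt}$ term produced by $C(\varepsilon)$ in Theorem~\ref{Thm:4.3} is simply absent here, which is exactly what forces the final bound to decay rather than merely stay bounded.

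Next I would localize with the stopping times $\tau_n'$ of Theorem~\ref{Thm:4.3}, control the supremum of the stochastic integral through the BDG and Young inequalities precisely as in \eqref{Eq:1230} (using (\hyperlink{A3}{A3}) to bound $\|\sigma_\varphi-\sigma_\psi\|_{L_2}^2$ by $L\big(\|\Gamma(u)\|_H^2+\int_{-\infty}^0\|\Gamma(u+\theta)\|_H^2\rho(\d\theta)\big)$), rearrange the delay integral exactly as in \eqref{Eq:6}, absorb the $\tfrac12\E(\sup\cdots)$ term on the left, and let $n\to\infty$ by monotone convergence. This gives
\[
\E\Big(\sup_{s<u\le t}\e^{2ru}\|\Gamma(u)\|_H^2\Big)\lesssim (1+(t-s))\,\e^{2rs}\|\varphi-\psi\|_r^2+\E\int_s^t\e^{2ru}\|\Gamma(u)\|_H^2\,\d u .
\]

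Finally I would invoke Lemma~\ref{Lem:1013:1} to dispose of the remaining time integral: \eqref{EQ:GAMMA:01} gives $\E\|\Gamma(u)\|_H^2\lesssim\|\varphi-\psi\|_r^2\e^{-\hat\lambda(u-s)}$, so since $\hat\lambda<2r$ one gets $\E\int_s^t\e^{2ru}\|\Gamma(u)\|_H^2\,\d u\lesssim\|\varphi-\psi\|_r^2\,\e^{2rt}\e^{-\hat\lambda(t-s)}$. Inserting these into the decomposition and using $(t-s)\e^{-(2r-\hat\lambda)(t-s)}\le (2r-\hat\lambda)^{-1}\e^{-1}$ to absorb the polynomial prefactor into the faster exponential $\e^{-(2r-\hat\lambda)(t-s)}$, every term is dominated by $\|\varphi-\psi\|_r^2\,\e^{-\hat\lambda(t-s)}$, which is \eqref{Eq:1018:1}. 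The main obstacle is not any single estimate—each one parallels Theorem~\ref{Thm:4.3}—but the bookkeeping that guarantees the time-integral term inherits the sharper $H$-convergence rate $\hat\lambda$ of Lemma~\ref{Lem:1013:1} (rather than mere boundedness) and that the gap $2r-\hat\lambda>0$ is enough to swallow the factor $1+(t-s)$ without degrading the exponential decay.
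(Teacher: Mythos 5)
Your proposal is correct and follows essentially the same route as the paper's proof: the same segment-norm decomposition, the same It\^o/BDG/Young estimates exploiting the absence of an additive constant in the difference bounds, and the same use of Lemma~\ref{Lem:1013:1} to control the remaining time integral before absorbing the $1+(t-s)$ factor via $\hat\lambda<2r$. The only point to adjust is the localization: the paper uses stopping times $\hat\tau_n$ controlling \emph{both} solution processes (not the single-process times $\tau_n'$ of Theorem~\ref{Thm:4.3}), which is what one needs so that the localized stochastic integral involving $\sigma(X_u(s,(\varphi,i)),\Lambda(u))-\sigma(X_u(s,(\psi,i)),\Lambda(u))$ is a genuine martingale.
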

\begin{proof}
    Similar to \eqref{Eq:1013:1}, we have
    \begin{equation}\label{Eq:1013:4}   \mathbb{E}[\left\|\Gamma_{t}\right\|_{r}^{2}]\leq \e^{-2 r (t-s)}\|\varphi-\psi\|_{r}^{2}+\e^{-2rt}\mathbb{E}\left(\sup _{s<u \leq t} \e^{2 r u}\|\Gamma(u)\|_{H}^{2}\right),
    \end{equation}
    where $\Gamma_t$ is the segment process of $\Gamma(t)$ defined by \eqref{EQ:1013:01}.    By carrying out an argument similar to  \eqref{Eq:6},  it follows that
\begin{equation}\label{Eq:1013:2}
         \int_{s}^{t} \int_{-\infty}^{0} \e^{2 r u} \| \Gamma( u+\theta) \|_{H}^{2} \rho(\d \theta) \d u\leq \rho^{(2r)}(t-s)\e^{2r s}\|\varphi-\psi\|_r^2+\rho^{(2r)}\int_{s}^{t} \e^{2 r u}\|\Gamma(u)\|_{H}^{2} \d u.
\end{equation} 
    Applying \ito's formula to $\e^{2rt}\|\Gamma(t)\|_H^2$, we have
        \begin{align*}
\e^{2rt}\|\Gamma(t)\|_H^2&\leqslant \e^{2 r s}\|\varphi-\psi\|_{r}^{2}+2 r \int_{s}^{t} \e^{2 r u}\|\Gamma(u)\|_{r}^{2} \d u -2 \int_{s}^{t} \e^{2 r u} \lambda_{1}(\Lambda(u))\|\Gamma(u)\|_{H}^{2} \d u\\
&\quad+\int_{s}^{t} \e^{2 r u}\left((\alpha(\Lambda(u))+L)\|\Gamma(u)\|_{H}^{2}+(\beta(\Lambda(u))+L) \int_{-\infty}^{0}\left\|\Gamma(u+\theta)\right\|_{H }^{2}\rho(\d \theta)\right)\d u \\
&\quad+2 \int_{s}^{t} \e^{2 r u}\left\langle\Gamma(u),\big(\sigma(X_{u}(s, (\varphi, i)), \Lambda(u))-\sigma\left(X_{u}(s, (\psi, i)),\Lambda(u)\right)\big) \d \overline{W}(u)  \right\rangle_H\\
&\leqslant \e^{2 r s}\|\varphi-\psi\|_{r}^{2}+(2 r+\alpha_{max}\vee 0+L) \int_{s}^{t} \e^{2 r u}\|\Gamma(u)\|_{H}^{2} \d u \\
&\quad+\left(\beta_{max }+L\right) \int_{s}^{t} \int_{-\infty}^{0} \e^{2 r u} \| \Gamma( u+\theta) \|_{H}^{2} \rho(\d \theta) \d u \\
&\quad+2 \int_{s}^{t} \e^{2 r u}\left\langle\Gamma(u),\big(\sigma(X_{u}(s, (\varphi, i)), \Lambda(u))-\sigma\left(X_{u}(s, (\psi, i)),\Lambda(u)\right)\big) \d \overline{W}(u)  \right\rangle_H\\
&\leq C_1(1+(t-s))\e^{2 r s}\|\varphi-\psi\|_{r}^{2}+C_2\int_{s}^{t} \e^{2 r u}\|\Gamma(u)\|_{H}^{2} \d u \\
&\quad+2 \int_{s}^{t} \e^{2 r u}\left\langle\Gamma(u),\big(\sigma(X_{u}(s, (\varphi, i)), \Lambda(u))-\sigma\left(X_{u}(s, (\psi, i)),\Lambda(u)\right)\big) \d \overline{W}(u)  \right\rangle_H,
        \end{align*}
    where $C_1=1\vee \left(\beta_{max }+L\right)\rho^{(2r)}$, $C_2=2 r+\alpha_{max}\vee 0+L+ \left(\beta_{max }+L\right)\rho^{(2r)}$ and we have used \eqref{Eq:1013:2} in the last inequality. 

    Define a family of non-decreasing stopping times $\{\hat{\tau}_n\}_{n\geq 1}$  by
\begin{equation*}
        \hat{\tau}_n:=\inf\{t\geq 0:\|X_{t}(s,(\varphi,i))\|_H\vee \|X_{t}(s,(\psi,i))\|_H\geq n\}\quad \text { for any } n \geq 1.
\end{equation*}
Similar to \eqref{EQ:1030:11}, $\hat{\tau}_n$ tends to $\infty$ almost surely, as $n\to\infty$.  Moreover,  as in \eqref{Eq:1230}, 
    \begin{align*}
2\mathbb{E}&\left(\sup_ { s < u \leq t\wedge\hat{\tau}_n } \int _ { s } ^ { u } \e ^ { 2 r v } \left\langle\Gamma(v),\big(\sigma(X_{v}(s,(\varphi, i )), \Lambda(v))-\sigma\left(X_{v}(s,(\psi, i)), \Lambda(v)\right) \big)\d\overline{W}(v)\right\rangle_{H}\right)\\
&\leq \frac{1}{2} \mathbb{E}\left(\sup _{s<u \leq t\wedge\hat{\tau}_n } \e^{2 r u} \| \Gamma(u) \|_{H}^{2}\right)+64 L \mathbb{E}\int_{s}^{t\wedge\hat{\tau}_n } \e^{2 r u}\|\Gamma(u)\|_{H}^{2} \d u\\
&\quad+64 L \mathbb{E}\int_{s}^{t\wedge\hat{\tau}_n } \int_{-\infty}^{0} \e^{2 r u}\|\Gamma(u+\theta)\|_{H}^{2} \rho( \d \theta)\d u\\
&\leq \frac{1}{2} \mathbb{E}\left(\sup _{s<u \leq t\wedge\hat{\tau}_n } \e^{2 r u} \| \Gamma(u) \|_{H}^{2}\right)+64 L \rho^{(2 r)}(t-s)\e^{2 r s} \|\varphi-\psi\|_{r}^{2} \\
&\quad+64 L\left(1+\rho^{(2 r)}\right) \mathbb{E}\int_{s}^{t\wedge\hat{\tau}_n } \e^{2 r u}\left\|\Gamma(u)\right\|_{H}^{2} \d u,
        \end{align*}
where we have used  (\hyperlink{A3}{A3}). Combining the above calculations with implies 
   \begin{equation*}
           \mathbb{E}\left(\sup _{s<u \leq t\wedge\hat{\tau}_n } \e^{2 r u}\|\Gamma(u)\|_{H}^{2}\right)\lesssim  (1+(t-s))\e^{2 r s}\|\varphi-\psi\|_{r}^{2}+\E\int_{s}^{t\wedge\hat{\tau}_n } \e^{2 r u}\|\Gamma(u)\|_{H}^{2} \d u.
   \end{equation*}
  Letting $n\to\infty$ and using Lemma \ref{Lem:1013:1} gives 
  \begin{equation}\label{Eq:1013:3}
       \begin{aligned}
           \mathbb{E}\left(\sup _{s<u \leq t } \e^{2 r u}\|\Gamma(u)\|_{H}^{2}\right)&\lesssim(1+(t-s))\e^{2 r s}\|\varphi-\psi\|_{r}^{2}+\E\int_{s}^{t } \e^{2 r u}\|\Gamma(u)\|_{H}^{2} \d u\\
           &\lesssim \big((1+(t-s))\e^{2rs}+\e^{ -\hat{\lambda}(t-s)+2rt})\big)\|\varphi-\psi\|_{r}^{2}.
       \end{aligned}
   \end{equation}
  Plugging \eqref{Eq:1013:3} into \eqref{Eq:1013:4} yields 
\begin{equation*}
\begin{aligned}
     \mathbb{E}[\left\|\Gamma_{t}\right\|_{r}^{2}]&\lesssim \e^{-2 r (t-s)}\|\varphi-\psi\|_{r}^{2}+\e^{-2rt} \big((1+(t-s))\e^{2rs}+\e^{-\hat{\lambda}(t-s)+2rt}))\|\varphi-\psi\|_{r}^{2}\\
    &\lesssim \big(\e^{-\hat{\lambda} (t-s)}+(t-s)\e^{-(2r-\hat{\lambda})(t-s)}\e^{-\hat{\lambda} (t-s)}\big)\|\varphi-\psi\|_{r}^{2}\\
    &\lesssim \e^{-\hat{\lambda} (t-s)}\|\varphi-\psi\|_{r}^{2},
\end{aligned}
\end{equation*}
  where we have used the fact that  $\hat{\lambda}\in (0,2r)$.    The proof is complete.
    \end{proof}
\subsection{Exponential Erogodicity}\label{subsect:4.3}

Now we give the main theorem in this section.
\begin{theorem}\label{THM:MAIN:01}
    Let assumptions of Lemma \ref{lem:yizhiyoujie} be satisfied.  Then, the FSPDEwM \eqref{EQ:FRSPDE:01} has a unique invariant measure \( \mu \in \mathscr{P}(\mathscr{C}_{r} \times \mathbb{S}) \), which is also exponentially mixing.
\end{theorem}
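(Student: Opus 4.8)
The plan is to prove existence, uniqueness and exponential mixing in one sweep via the remote start method, exploiting the double-sided driving pair $(\overline{W},N_0)$ and the contraction estimates already established. For a fixed reference point $(\varphi,i)\in\mathscr{C}_r\times\S$ and each $s\le 0$, I would consider the remote-started segment pair $\big(X_0(s,(\varphi,i)),\Lambda(0;s,i)\big)$, every solution being driven by the same noise on the common space $(\Omega,\mathscr{H},(\mathscr{H}_t)_{t\in\R},\P)$. By the time-homogeneity of Theorem \ref{Thm:1105:1}, the law of this pair equals $P_{-s}((\varphi,i),\cdot)$, so sending $s\to-\infty$ is equivalent to running the semigroup to $+\infty$; the advantage is that all these objects live on one probability space and can be compared pathwise. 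The first goal is to show that $X_0(s,(\varphi,i))$ is Cauchy in $L^2(\Omega;\mathscr{C}_r)$ while $\Lambda(0;s,i)$ converges in law, define $\mu$ as the limiting law of the pair, and then quantify the convergence rate to extract the mixing inequality.

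For the Cauchy property, fix $s_1\le s_2\le 0$. Uniqueness of mild solutions (Theorem \ref{Eq:1026:2}) together with the flow property identifies the solution started at $s_1$, restarted at $s_2$, with the one issued at $s_2$ from its own state $\big(X_{s_2}(s_1,(\varphi,i)),\Lambda(s_2;s_1,i)\big)$. I would then split $\mathbb{E}\,d\big((X_0(s_1,\cdot),\Lambda(0;s_1,\cdot)),(X_0(s_2,\cdot),\Lambda(0;s_2,\cdot))\big)$ into its discrete and continuous parts. The two chains $\Lambda(\cdot;s_1,i)$ and $\Lambda(\cdot;s_2,i)$ are driven by the common $N_0$, hence synchronously coupled; let $\tau$ be the first time after $s_2$ at which they agree, after which they coincide. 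By irreducibility of the finite chain a Doeblin-type minorization gives $\P(\tau>0)\lesssim\e^{-c(-s_2)}$ and an exponential-moment bound for $\tau-s_2$. On $\{\tau\le 0\}$ the two continuous components then solve the \emph{same} FSPDEwM with the \emph{same} discrete path and Brownian noise on $[\tau,0]$, so conditioning on $\overline{\mathscr{F}}_\tau\otimes\overline{\mathscr{G}}_\tau$ and invoking the strong Markov property (Theorem \ref{Thm:1105:1}) lets me apply Theorem \ref{Thm:4.4} from time $\tau$, yielding a bound of the form $\mathbb{E}\big[(\|X_\tau(s_1,\cdot)\|_r^2+\|X_\tau(s_2,\cdot)\|_r^2)\,\e^{\hat\lambda\tau}\big]$; writing $\e^{\hat\lambda\tau}=\e^{\hat\lambda s_2}\e^{\hat\lambda(\tau-s_2)}$ and using Cauchy--Schwarz, the uniform second-moment bound of Theorem \ref{Thm:4.3}, and the exponential moment of $\tau-s_2$, this is of order $(1+\|\varphi\|_r^2)\,\e^{\hat\lambda s_2}$. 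On the small-probability event $\{\tau>0\}$ the $L^2$-boundedness of both segments controls the remainder. Hence $X_0(s,(\varphi,i))\to\eta$ in $L^2(\Omega;\mathscr{C}_r)$, and Theorem \ref{Thm:4.4} with the same discrete coupling shows the limit is independent of $(\varphi,i)$, so I set $\mu:=\mathrm{Law}(\eta,\kappa)$.

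It remains to check invariance and the quantitative estimate. Invariance $\mu P_t=\mu$ follows from $P_{-s}((\varphi,i),\cdot)\to\mu$ and $P_t\circ P_{-s}=P_{t-s}$ upon letting $s\to-\infty$, the limit being passed through $P_t$ via the Lipschitz continuity in the initial datum furnished by Theorem \ref{Thm:4.4}. For mixing, sending $s_1\to-\infty$ in the Cauchy estimate gives, in the Wasserstein distance $\mathscr{W}_d$ induced by $d$, $\mathscr{W}_d\big(P_{-s_2}((\varphi,i),\cdot),\mu\big)\lesssim(1+\|\varphi\|_r)\,\e^{(\hat\lambda/2)s_2}$; setting $s_2=-t$ yields $|P_tf(\varphi,i)-\mu(f)|\le\mathscr{W}_d(P_t((\varphi,i),\cdot),\mu)\,\|f\|_{\mathrm{Lip}}\le c(\varphi,i)\,\e^{-\lambda t}\|f\|_{\mathrm{Lip}}$ with $c(\varphi,i)=C(1+\|\varphi\|_r)$ and some $\lambda>0$, i.e.\ exponential mixing; uniqueness is immediate since any invariant $\nu$ obeys $\nu(f)=\int P_tf\,\d\nu\to\mu(f)$. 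I expect the main obstacle to be the discrete component: Lemmas \ref{lem:yizhiyoujie}, \ref{Lem:1013:1} and Theorems \ref{Thm:4.3}, \ref{Thm:4.4} all presuppose a common switching path, whereas the cocycle identity forces a comparison of solutions whose intermediate discrete states differ. Reconciling this demands coupling the two chains through the shared $N_0$, establishing an exponential-moment bound for the merging time $\tau$, and verifying that the continuous discrepancy accumulated before $\tau$ is integrable against $\e^{\hat\lambda\tau}$; this interplay between the discrete coupling and the continuous dissipative contraction is the crux of the argument.
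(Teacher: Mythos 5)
Your overall architecture is the same as the paper's: remote start, a decomposition at the meeting time $\tau$ of the two switching paths, strong Markov property plus Theorems \ref{Thm:4.3} and \ref{Thm:4.4} after the merge, and then invariance, uniqueness and the Wasserstein-type mixing bound exactly as in the paper's Steps 1--3. However, there is one genuine gap, and it sits precisely at the point you yourself flag as the crux: the coupling of the discrete components.

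You couple $\Lambda(\cdot;s_1,i)$ and $\Lambda(\cdot;s_2,i)$ \emph{synchronously}, through the common $N_0$, and claim that irreducibility of the finite chain plus a Doeblin-type minorization gives $\widetilde{\mathbb{P}}(\tau>t)\lesssim \e^{-c(t-s_2)}$. This step fails: two chains driven by the \emph{same} Poisson random measure via the Skorokhod representation \eqref{Eq:1026:1} need never meet. Concretely, take $\S=\{1,2\}$ with $q_{12}=q_{21}=1$; then $\Delta_{12}=\Delta_{21}=[0,1)$ and, by \eqref{EQ:0916:01}, $h(1,u)=-h(2,u)=\1_{[0,1)}(u)$, so two synchronously coupled copies sitting in different states jump at exactly the same atoms and swap states simultaneously forever; the meeting time $\tau$ of \eqref{Eq:01041} is infinite on the event $\{\Lambda(s_2;s_1,i)\neq i\}$, which has positive probability. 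Doeblin minorization controls the law of a single chain (equivalently, the meeting time of \emph{independent} copies), but says nothing about the meeting time of the synchronous coupling. This is exactly why the paper introduces the product space $(\widetilde{\Omega},\widetilde{\mathscr{H}},\widetilde{\mathbb{P}})=(\Omega\times\Omega,\mathscr{H}\otimes\mathscr{H},\mathbb{P}\times\mathbb{P})$ and runs the two pairs on independent coordinates: for independent copies of an irreducible finite-state chain the tail bound \eqref{Eq:1018:2} is the classical fact, and only \emph{after} $\tau$ does one glue the two solutions together (via the strong Markov property) so that they share the same driving noise and the same switching path, at which point Theorem \ref{Thm:4.4} kills both the $\|\cdot\|_r$-term and the $\ell$-term in $J_1$. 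With that single repair --- independent coupling of the discrete noise before $\tau$, synchronous coupling after --- the rest of your argument (the $\e^{\hat\lambda\tau}$ bookkeeping via Cauchy--Schwarz and Theorem \ref{Thm:4.3}, the Cauchy property, Feller continuity for invariance, uniqueness, and the mixing estimate) goes through and coincides with the paper's proof. A minor additional point: in your uniqueness step, passing to the limit in $\nu(f)=\int P_t f\,\d\nu$ requires $\int \|\psi\|_r\,\nu(\d\psi\times\d j)<\infty$ for the competing invariant measure $\nu$; the paper secures this moment bound by a separate Fatou argument using Theorem \ref{Thm:4.3}, and you should do the same.
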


\begin{proof}
The proof  is divided into three  steps as follows.

\noindent Step 1: Existence of an invariant measure. Let 
\begin{equation*}
    L^1(\Omega;\mathscr{C}_r\times
     \S):=\Big\{(Y_1,Y_2):(\Omega,\mathscr{F},\P)\to (\mathscr{C}_r\times
     \S,\mathcal{B}(\mathscr{C}_r\times
     \S)) \mid \E[ d((Y_1,Y_2),(\textbf{0},k_0))]<\infty\Big\},
\end{equation*}
where $k_0 \in \S $ is arbitrarily given. We can show that $L^1(\Omega;\mathscr{C}_r\times
     \S)$ is complete, see Remark \ref{RMK:05} for more details.
For any fixed $t\in \R$,  we want to prove that the sequence of random variables $\{(X_{t}(s, (\varphi,i)),\Lambda(t;s,i))\}_{s\leq t}$ satisfies the Cauchy condition in $L^1(\Omega;\mathscr{C}_r\times\mathbb{S})$ as $s\to -\infty$.  For this aim, let us define the product probability space
\[
(\widetilde{\Omega},\widetilde{\mathscr{H}},\widetilde{\mathbb{P}})=(\Omega\times\Omega,\mathscr{H}\otimes\mathscr{H},\mathbb{P}\times \mathbb{P}).
\]
For any fixed $t\in \R$, we consider the process $(X_{t}(s_1, (\varphi,i)),\Lambda(t;s_1,i), X_{t}(s_2, (\varphi,i)),\Lambda(t;s_2,i))$ on this  product probability space for any initial data  $(\varphi,i) ,(\psi,i)\in \mathscr{C}_{r}\times \S$  and initial times   $-\infty<s_1 \leq s_2 \leq t <\infty$.  Note that the Markov chains  $(\Lambda(t; s_1, i))_{t\geq s_1 } $ and  $(\Lambda(t; s_2, i))_{t\geq s_2}$ are two different  process. Let us define the following stopping time
\begin{equation}\label{Eq:01041}
    \tau:=\inf \left\{t \geq s_2: \Lambda(t;s_1,i)=\Lambda(t;s_2,i)\right\} .
\end{equation}
Since \( \mathbb{S} \) is a finite set and \( Q \) is irreducible, it is well known that there exists a constant \( \theta>0 \) such that
\begin{equation}\label{Eq:1018:2}
    \widetilde{\mathbb{P}}(\tau>t) \leq \mathrm{e}^{-\theta(t-s_2)}, \quad t\geq s _2 .
\end{equation}
From the definition of distance \( d(\cdot,\cdot) \), we have
\begin{equation}\label{eq:xiaolong}
\begin{aligned}
\widetilde{\mathbb{E}}&\Big[  d\Big(\big(X_{t}(s_1,(\varphi, i)), \Lambda(t;s_1,i)\big),\big(X_{t}(s_2,(\varphi, i)), \Lambda(t;s_2,i)\big)\Big)\Big] \\
&=  \widetilde{\mathbb{E}}\left[\left\|X_{t}(s_1,(\varphi, i))-X_{t}(s_2,(\varphi, i))\right\|_{r}+\ell\left(\Lambda(t;s_1,i), \Lambda(t;s_2,i)\right)\right] \\
&=  \widetilde{\mathbb{E}}\left[\left(\|X_{t}(s_1,(\varphi, i))-X_{t}(s_2,(\varphi, i))\|_{r}+\ell\left(\Lambda(t;s_1,i), \Lambda(t;s_2,i)\right)\right)\1_{\{\tau \leqslant (t+s_2) / 2\}}\right] \\
&\quad +\widetilde{\mathbb{E}}\left[(\left\|X_{t}(s_1,(\varphi, i))-X_{t}(s_2,(\varphi, i))\right\|_{r}+\ell\left(\Lambda(t;s_1,i), \Lambda(t;s_2,i)\right))\1_{\{\tau>(t+s_2) / 2\}}\right] \\
&=:J_1(t)+J_2(t),
\end{aligned}
\end{equation}
where $\widetilde{\E}$ denotes the expectation with respect to $\widetilde{\P}$.

Recall the process $(X_{t}(s, (\varphi,i)),\Lambda(t;s,i))_{t\geq s}$ is a strong Markovian process. Hence, the process $(X_{t}(s_1, (\varphi,i)),\Lambda(t;s_1,i), X_{t}(s_2, (\varphi,i)),\Lambda(t;s_2,i))_{t\geq s_2}$ admits the strong Markovian property with respect to the natural filtration $\widetilde{\mathscr{H}}_t.$ Then, it follows from Theorems \ref{Thm:4.3} and \ref{Thm:4.4} that 
\begin{align}\label{eq:dianying}
\nonumber&\quad J_1(t)\\
\nonumber&=\widetilde{\mathbb{E}}\left[\widetilde{\mathbb{E}}\left[\left(\|X_{t}(s_1,(\varphi, i))-X_{t}(s_2,(\varphi, i))\|_{r}+\ell\left(\Lambda(t;s_1,i), \Lambda(t;s_2,i)\right)\right)\1_{\{\tau \leq (t+s_2) / 2\}} \mid \widetilde{\mathscr{H}}_{\tau}\right]\right] \\
\nonumber& =\widetilde{\mathbb{E}}\left[ \widetilde{\mathbb{E}}\left[\left\|X_{t}(\tau,(X_{\tau}(s_1,(\varphi, i)), \Lambda(\tau;s_1,i))-X_{t}(\tau,(X_{\tau}(s_2,(\varphi, i)), \Lambda(\tau;s_2,i))\right\|_{r}\right]\1_{\{\tau \leq (t+s_2) / 2\}}\right] \\
& \lesssim \widetilde{\mathbb{E}}\left(\e^{-\hat{\lambda}(t-\tau)/2} \left\|X_{\tau}(s_1,(\varphi, i))-X_{\tau}(s_2,(\varphi, i))\right\|_{r}\1_{\{\tau \leq (t+s_2) / 2\}}\right)\\
\nonumber& \lesssim \widetilde{\mathbb{E}}\left(\left\|X_{\tau}(s_1,(\varphi, i))\right\|_{r}+\left\|X_{\tau}(s_2,(\varphi, i))\right\|_{r}\right) \e^{-\hat{\lambda} (t-s_2) / 4} \\
\nonumber& \lesssim\left(1+\|\varphi\|_{r}\right) \e^{-\hat{\lambda} (t-s_2) / 4}.
\end{align}
Meanwhile, applying Hölder's inequality along with \eqref{Eq:1018:2} and Theorem \ref{Thm:4.3} yields that 
\begin{align}\label{eq:cifff}
\nonumber J_2(t)&\leq \Big[\widetilde{\mathbb{E}}\big(\left\|X_{t}(s_1,(\varphi, i))-X_{t}(s_2,(\varphi, i))\right\|_{r}+\ell(\Lambda(t;s_1,i), \Lambda(t;s_2,i))\big)^{2}\Big]^{1 / 2}\\
\nonumber &\qquad\times [\widetilde{\mathbb{P}}(\tau>(t+s_2) / 2)]^{1 / 2}  \\
&\lesssim \left(1+ \widetilde{\mathbb{E}}\left\|X_{t}(s_1,(\varphi, i))-X_{t}(s_2,(\varphi, i))\right\|_{r}^{2}\right)^{1 / 2}\mathrm{e}^{-\theta (t-s_2) / 4} \\
\nonumber &\lesssim \left(1+ \widetilde{\mathbb{E}}\left\|X_{t}(s_1,(\varphi, i))\right\|_{r}^{2}+ \widetilde{\mathbb{E}}\left\|X_{t}(s_2,(\varphi, i))\right\|_{r}^{2}\right)^{1 / 2} \mathrm{e}^{-\theta (t-s_2) / 4}\\
\nonumber &\lesssim \left(1+\|\varphi\|_{r}\right) \mathrm{e}^{-\theta (t-s_2) / 4} .
\end{align}

Inserting \eqref{eq:dianying}  and \eqref{eq:cifff}  into \eqref{eq:xiaolong}, we obtain
\begin{equation}\label{Eq:3}
    \widetilde{\mathbb{E}}\Big[  d\Big(\big(X_{t}(s_1,(\varphi, i)), \Lambda(t;s_1,i)\big),\big(X_{t}(s_2,(\varphi, i)), \Lambda(t;s_2,i)\big)\Big)\Big]\lesssim \left(1+\|\varphi\|_{r}\right) \mathrm{e}^{-\kappa (t-s_2)} .
\end{equation}
where  \( \kappa=\hat{\lambda} / 4\wedge \theta / 4\).  Thus, there exists a random variable  $\eta_t(\varphi,i)\in L^1(\Omega;\mathscr{C}_r\times\mathbb{S}) $ such that
\begin{equation}\label{Eq:1027:1}
    \lim_{s\to-\infty}\widetilde{\mathbb{E}}\Big[  d\Big(\big(X_{t}(s,(\varphi, i)), \Lambda(t;s,i)\big),\eta_t(\varphi,i)\Big)\Big]=0.
\end{equation}

Next, following the argument to derive \eqref{Eq:3}, we can similarly obtain that
\begin{equation}\label{Eq:4}
    \widetilde{\mathbb{E}}\Big[  d\Big((X_{t}(s,(\varphi, i)), \Lambda(t;s,i)),(X_{t}(s,(\psi, j)), \Lambda(t;s,j))\Big)\Big]\lesssim (1+\|\varphi\|_r+\|\psi\|_r)\mathrm{e}^{-\kappa (t-s)} .
\end{equation}
Then,  \( \eta_{t}(\varphi,i) \) is independent of the initial value \( (\varphi,i)\in \mathscr{C}_r\times\mathbb{S}\), which is simply denoted by \( \eta_{t} \). Indeed, by \eqref{Eq:3} and \eqref{Eq:4}, we have 
\[
\begin{aligned}
\widetilde{\mathbb{E}}&[  d(\eta_t(\varphi,i),\eta_t(\psi,j))] \\
&\leq  \widetilde{\mathbb{E}}\Big[  d\Big((X_{t}(s,(\varphi, i)), \Lambda(t;s,i)),\eta_t(\varphi,i)\Big)\Big]+\widetilde{\mathbb{E}}\Big[  d\Big((X_{t}(s,(\psi, j)), \Lambda(t;s,j)),\eta_t(\psi,j)\Big)\Big]\\
& \qquad+\widetilde{\mathbb{E}}\Big[  d\Big((X_{t}(s,(\varphi, i)), \Lambda(t;s,i)),(X_{t}(s,(\psi, j)), \Lambda(t;s,j))\Big)\Big]\to 0,\text{ as } s\to-\infty ,
\end{aligned}
\]
for any \( (\varphi,i),(\psi,j)\in \mathscr{C}_r\times\mathbb{S}\). For any \( -\infty<s \leq t<\infty \), let
\[
P_{s, t}((\varphi,i), \cdot)=\mathbb{P} \circ\left(X_{t}(s, (\varphi,i)),\Lambda(t;s,i)\right)^{-1}(\cdot) 
\]
and 
\[
P_{s, t} f(\varphi,i)=\int_{\mathscr{C}_r\times \mathbb{S}}f(\psi,j)P_{s, t}((\varphi,i),\d \psi\times\d j), \quad f \in \mathscr{B}_{b}(\mathscr{C}_r\times \mathbb{S}).
\]
Then, for any \( -\infty<s \leq u \leq t<\infty \), by the Markov property of \( (X_{t}(s, (\varphi,i)),\Lambda(t;s,i))_{t\geq s}\), we derive that
\[
P_{s, u} \circ P_{u, t}=P_{s, t}
\]
and, by the time-homogeneity, that
\[
P_{s, t}((\varphi,i), \cdot)=P_{0, t-s}((\varphi,i), \cdot) .
\]
Letting $t=0$ in \eqref{Eq:1027:1}, it follows that \( (X_{0}(s, (\varphi,i)),\Lambda(0;s,i))_{s\leq 0} \) converges in probability to \( \eta_{0} \) as \( s \rightarrow-\infty \) by Markov's inequality. Hence,
\[
P_{s, 0}((\varphi,i), \cdot) \rightarrow \mu:=\mathbb{P} \circ \eta_{0}^{-1} \quad \text { weakly as } s \rightarrow -\infty .
\]

In what follows, we shall show that $\mu$ is indeed an invariant probability measure of the FSPDEwM.  Adopting the monotone class argument, it is sufficient to  verify that \eqref{Eq:5} holds for \( f \in Lip_b(\mathscr{C}_r\times\mathbb{S}) \), the set of all bounded Lipschitz functions \( f: \mathscr{C}_r\times\mathbb{S} \mapsto \mathbb{R} \).  If we can show that $(X_{t}(s, (\varphi,i)),\Lambda(t;s,i))_{t\geq s}$ admits the Feller property. Then  for any \( f \in Lip_{b}(\mathscr{C}_r\times\mathbb{S}) \),  \( P_{0, t} f \in C_{b}(\mathscr{C}_r\times\mathbb{S}) \).  Indeed, for any $f\in C_{b}(\mathscr{C}_r\times\mathbb{S})$, it is obvious that $P_{0,t}f(\varphi,i)$ is bounded. Since $\S$ has a discrete metric, it is sufficient to show that $P_{0,t}f(\varphi,i)$ is continuous with respect to $\varphi$.  By virtue of Theorem 5.6 in  \cite{chen2004markov}, we need to prove that
	\begin{equation}\label{Eq:0104}
		\mathbb{W}_2(P_{0,t}((\varphi,k),\cdot),P_{0,t}((\psi,k),\cdot))\rightarrow0\quad {\rm as} \quad \|\varphi-\psi\|_r \rightarrow 0,
	\end{equation}
	where $\mathbb{W}_2(\cdot,\cdot)$ denotes the $L^2$-Wasserstein metric between two probability measures.  The definition of $\mathbb{W}_2(\cdot,\cdot)$ and Theorem \ref{Thm:4.4} imply that \eqref{Eq:0104}  holds.

The definition of weak convergence of probability measures lead to
\[
\mu\left(P_{0, t} f\right)=\lim _{s \rightarrow -\infty} P_{s, 0}\left(P_{0, t} f(\varphi,i)\right)=\lim _{s \rightarrow -\infty} P_{-(t-s), 0} f(\varphi,i)=\mu(f) .
\]
Moreover, noting that \( P_{t} f=P_{0, t} f \) for \( t \geq 0 \),   we get the desired result.

\noindent Step 2: Uniqueness of invariant measure.  Let \( M \geqslant 0 \) be an arbitrary constant. Then, from the invariance of \( \mu \) and Theorem \ref{Thm:4.3}, we have
\[
\begin{aligned}
\int_{\mathscr{C}_r\times\mathbb{S}}f_M (\psi,j)\mu(\mathrm{d} \psi \times \mathrm{d}j) 
& =\int_{\mathscr{C}_r\times\mathbb{S}} P_{t}f_M (\psi,j) \mu(\mathrm{d} \psi \times \mathrm{d}j) \\
& =\int_{\mathscr{C}_r\times\mathbb{S}} \mathbb{E}[f_M(X_{t}(0, (\psi,j)),\Lambda(t;0,j))]\mu(\mathrm{d} \psi \times \mathrm{d}j) \\
& =\int_{\mathscr{C}_r\times\mathbb{S}} (\mathbb{E}\|X_{t}(0, (\psi,j))\|_r\wedge M)\mu(\mathrm{d} \psi \times \mathrm{d}j) \\
& \lesssim1+\int_{\mathscr{C}_r\times\mathbb{S}}(\e^{-\lambda t}\|\psi\|_{r}\wedge M)\mu(\mathrm{d} \psi \times \mathrm{d}j),
\end{aligned}
\]
where $f_M(\psi,j)=\|\psi\|_{r} \wedge M$. Letting first  \( t \rightarrow\infty \) and then letting \( M \rightarrow\infty \), Fatou's lemma  implies that $\int_{\mathscr{C}_r\times\mathbb{S}} \|\psi\|_r\mu\left(\mathrm{d} \psi,\d j\right)<\infty$ . Let \( \nu \in \mathscr{P}(\mathscr{C}_r\times\mathbb{S}) \) be another invariant measure. Then, for any  $ f \in Lip_{b}(\mathscr{C}_r\times\mathbb{S}) $,  \eqref{Eq:4} implies
\[
\begin{aligned}
&\quad|\mu(f)-\nu(f)| \\
& \leq \int_{\mathscr{C}_r\times\mathbb{S}} \int_{\mathscr{C}_r\times\mathbb{S}}\left|P_{t} f(\varphi,i)-P_{t} f\left(\psi,j\right)\right| \mu(\mathrm{d} \varphi,\d i)\nu\left(\mathrm{d} \psi,\d j\right),\\
& \leq \|f\|_{Lip}  \int\int_{(\mathscr{C}_r\times\mathbb{S})^2} \widetilde{\mathbb{E}}\Big[  d\Big((X_{t}(0,(\varphi, i)), \Lambda(t;0,i)),(X_{t}(0,(\psi, j)), \Lambda(t;0,j))\Big)\Big] \mu(\mathrm{d} \varphi,\d i) \nu\left(\mathrm{d} \psi,\d j\right) \\
&\lesssim \mathrm{e}^{-\kappa t}\int\int_{(\mathscr{C}_r\times\mathbb{S})^2} (1+\|\varphi\|_r+\|\psi\|_r)\mu\left(\mathrm{d} \varphi,\d i\right)\nu\left(\mathrm{d} \psi,\d j\right)\\
&\lesssim \mathrm{e}^{-\kappa t}\left(1+\int_{\mathscr{C}_r\times\mathbb{S}}\|\varphi\|_r\mu\left(\mathrm{d} \varphi,\d i\right) +\int_{\mathscr{C}_r\times\mathbb{S}} \|\psi\|_r\nu\left(\mathrm{d} \psi,\d j\right)\right) \rightarrow 0, \quad \text { as } t \rightarrow \infty.
\end{aligned}
\]
Thus, the uniqueness of invariant measure follows. 

\noindent Step 3: Exponential Mixing. By the invariance of \( \mu \) and \eqref{Eq:4}, we obtain that
\begin{align*}
|P_{t}& f(\varphi,i)-\mu(f)| \\
& \leq \int_{\mathscr{C}_r\times\mathbb{S}}\left|P_{t} f(\varphi,i)-P_{t} f\left(\psi,j\right)\right| \mu\left(\mathrm{d} \psi,\d j\right) \\
& \leq\|f\|_{Lip}\int_{\mathscr{C}_r\times\mathbb{S}}\widetilde{\mathbb{E}}\Big[  d\Big((X_{t}(0,(\varphi, i)), \Lambda(t;0,i)),(X_{t}(0,(\psi, j)), \Lambda(t;0,j))\Big)\Big] \mu\left(\mathrm{d} \psi,\d j\right) \\
& \lesssim \mathrm{e}^{-\kappa t}\int_{\mathscr{C}_r\times\mathbb{S}}(1+\|\varphi\|_r+\|\psi\|_r)\mu\left(\mathrm{d} \psi,\d j\right)\\
&\lesssim \mathrm{e}^{-\kappa t} \Big(1+\|\varphi\|_r+\int_{\mathscr{C}_r\times\mathbb{S}}\|\psi\|_r\mu\left(\mathrm{d} \psi,\d j\right)\Big),
\end{align*}
for any $ f \in Lip_b(\mathscr{C}_r\times\mathbb{S})$. Now the proof is complete.
\end{proof}

\begin{remark}\label{RMK:05}
Here, we provide some remarks on the completeness of the space \( L^1(\Omega; \mathscr{C}_r \times \mathbb{S}) \). Suppose that \( \{Y^{(n)}\}_{n \geq 1} = \{(Y^{(n)}_1, Y^{(n)}_2)\}_{n \geq 1} \) is a Cauchy sequence in this space. Then, we have,
\begin{equation}\label{EQ:CAUCHY:02}
    \lim_{n,m \to \infty} \mathbb{E} \left[ \| Y^{(n)}_1 - Y^{(m)}_1 \|_r + \ell(Y^{(n)}_2, Y^{(m)}_2) \right] = 0.
\end{equation}
Since \( (\mathscr{C}_r, \|\cdot\|_r) \) is a Banach space, we can find a unique limit \( Y_1 \in L^1(\Omega; \mathscr{C}_r) \) such that
\[
\lim_{n \to \infty} \mathbb{E} \| Y^{(n)}_1 - Y_1 \|_r = 0.
\]
By \eqref{EQ:CAUCHY:02}, the sequence \( \{ Y^{(n)}_2 \} \) converges in probability. So there exists a subsequence \( \{ Y^{(n_k)}_2 \} \) that converges almost surely to a limit, denoted by \( Y_2 \), under the metric \( \ell(\cdot, \cdot) \). That is,
\[
\mathbbm{1}_{\{ Y^{(n_k)}_2 \neq Y_2 \}} \to 0 \quad \text{as} \quad k \to \infty,
\]
which implies that there exists a sufficiently large \( k \geq 1 \) such that for any \( l \geq 1 \), \( Y^{(n_k)}_2 = Y^{(n_{k+l})}_2 = Y_2 \) almost surely. We conclude that \( Y_2 \in L^1(\Omega; \mathbb{S}) \).
Moreover, for any \( \varepsilon > 0 \), for sufficiently large \(  k,m \), we have
\[
\mathbb{E} \mathbbm{1}_{\{ Y^{(n_k)}_2  \neq Y^{(m)}_2 \}} \leq \varepsilon.
\]
Thus,
\[
\varepsilon \geq \lim_{k \to \infty} \mathbb{E} \mathbbm{1}_{\{ Y^{(n_k)}_2  \neq Y^{(m)}_2  \}} = \mathbb{E} \lim_{k \to \infty} \mathbbm{1}_{\{ Y^{(n_k)}_2  \neq Y^{(m)}_2  \}} = \mathbb{E} \mathbbm{1}_{\{ Y_2  \neq Y^{(m)}_2  \}},
\]
which implies that \( Y^{(m)}_2 \overset{L^1}{\to} Y_2  \in L^1(\Omega; \mathbb{S}) \). Finally, note that we identify \( L^1(\Omega; \mathscr{C}_r) \times L^1(\Omega; \mathbb{S}) \) with \( L^1(\Omega; \mathscr{C}_r \times \mathbb{S}) \), so the completeness of \( L^1(\Omega; \mathscr{C}_r \times \mathbb{S}) \) is established.
\end{remark}

\section{Case  II: infinite state space}\label{sec:5}
In this section, we consider the FSPDEwM \eqref{EQ:FRSPDE:01}  in an infinite state space. In the sequel, we use the same notations as in Section \ref{sec:4}, and assume that (\hyperlink{(B1)}{B1}) and (\hyperlink{(B2)}{B2}) hold.  To address the challenge posed by the infinite state space, we adopt the approach presented in \cite{SHAO2015SPA}. Let us provide a brief description of this approach as follows. First, we divide $\mathbb{S}$ into finite subsets according to ${\alpha(\cdot)}$ given  in (\hyperlink{(A2)}{A2}). Precisely, choose a finite partition $\mathcal{M}$ of ${(-\infty, \alpha_{sup}]}$ of size ${m}$ with ${m \geq 1}$, that is,
\begin{equation}\label{EQ:PARTITION:01}
    	\mathcal{M}:=\left\{-\infty=: i_{0}<i_{1}<\cdots<i_{m}:=\alpha_{sup}\right\} .
\end{equation}
	Corresponding to $\mathcal{M}$, there exists a finite partition of ${\mathbb{S}}$, denoted by ${F:=\left\{F_{1}, \ldots, F_{m}\right\}}$, where
	\[
	F_d=\left\{l \in \mathbb{S} : \alpha(l) \in\left(i_{d-1}, i_{d}\right]\right\}, \quad  d=1, \ldots, m .
	\]
	We assume that each \( F_d \) is non-empty. Otherwise, we can remove certain points from the partition \( \mathcal{M} \) to ensure this condition.
  Let \( Q^{F} = \left(q_{kl}^{F}\right)_{m \times m} \) be a new \( Q \)-matrix on the state space \( \{1, 2, \ldots, m\} \) corresponding to \( F \), defined by
  \begin{equation}\label{eq:xinleng}
		q_{k l}^{F}=\inf _{j_1 \in F_{k}} \sum_{j_2 \in F_{l}} q_{j_1 j_2},  l>k ; \quad q_{k l}^{F}=\sup _{j_1 \in F_{k}} \sum_{j_2 \in F_l} q_{j_1j_2}, l<k ;\quad  q_{k k}^{F}=-\sum_{l \neq k} q_{k l}^{F}.
	\end{equation}
 Since each ${F_d}$ is non-empty, we obtain $0 \leq q_{k l}^{F} \leq \sup _{k \in \mathbb{S}} q_{k}<\infty, l \neq k$. Hence,  $Q^F$ is well-defined. Moreover,   the consistency of this method on the finite partitions is satisfied (see \cite[Proposition 4.2.]{SHAO2015SPA} for more details). Let
	\begin{equation}\label{eq:xinffu}
		\lambda_1^{F}(d)=\inf _{k \in F_d} \lambda_1(k),\;\alpha^{F}(d)=\sup _{k \in F_d} \alpha(k), \text{ and }  \beta^{F}(d)=\sup _{k \in F_d} \beta(k).
	\end{equation}
	for $d=1, \ldots, m$. Here, it is obvious that $\lambda_1^F(\cdot)$ is  well-defined, while $\alpha^F(\cdot)$ and $\beta^F(\cdot)$ can be shown to be well-defined by (\hyperlink{(B2)}{B2}).
    
 In what follows, we show that \eqref{Eq:1018:2} still works in the case of infinite state space.   Let us consider the difference between two Markov chains starting from different initial times, namely
 \begin{equation}\label{Eq:1020:2}
     \begin{aligned}
 \d\left(\Lambda(t;s_2,i)-\Lambda(t;s_1,i)\right) & =\int_{[0, M]} (h(\Lambda(t-;s_2,i), u)-h(\Lambda(t-;s_1,i)),u)) N(\d t, \d u),
\end{aligned}
 \end{equation}
for any $-\infty<s_1 \leq s_2 \leq t <\infty$. Moreover, we need to introduce a function associated with Eq.\ \eqref{Eq:1020:2}. For any  function \( V:\mathbb{Z}\to \mathbb{R} \), define  \( \mathcal{L} V: \mathbb{S}\times\mathbb{S} \rightarrow \mathbb{R} \) by
\begin{equation}\label{Eq:1020:3}
    \mathcal{L} V(k,l) =  \int_{[0,M]}\left(V(k-l+h(k,u)-h(l,u))-V(k-l)\right)\mathfrak{m}(\d u).
\end{equation}
\begin{lemma}\label{Eq:1020:6}
    Let \(  F \) be a bounded non-negative  function defined on $\mathbb{Z}$. Assume that
\begin{equation}\label{Eq:1020:4}
    \mathcal{L} F(k, l) \leqslant-1, \quad  k \neq l .
\end{equation}
Then for any non-negative \( v \in \mathscr{C}^{1}([0, \infty) )\), where $\mathscr{C}^{1}([0, \infty)) $ is the space of continuously differentiable functions on $[0,\infty)$, we have
\begin{equation}\label{Eq:1020:5}
    \mathbb{E} \int_{s_2}^{t \wedge \tau} v(s)\mathrm{d} s \leqslant v(s_2) \mathbb{E}F(i-\Lambda(s_2;s_1,i))+\mathbb{E} \int_{s_2}^{t \wedge \tau} v^{\prime}(s) F\left(\Lambda(s;s_2,i)-\Lambda(s;s_1,i)\right) \mathrm{d} s,
\end{equation}
where $\tau$ is defined by \eqref{Eq:01041}.
\end{lemma}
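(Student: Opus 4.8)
The plan is to apply a Dynkin-type formula to the process $s\mapsto v(s)F(Z(s))$, where
$$Z(s):=\Lambda(s;s_2,i)-\Lambda(s;s_1,i),\qquad s\geq s_2,$$
and then to exploit the drift condition \eqref{Eq:1020:4}. First I would record the initial value $Z(s_2)=i-\Lambda(s_2;s_1,i)$, using $\Lambda(s_2;s_2,i)=i$, and observe that $Z$ takes integer values (since each $h(k,\cdot)$ is integer-valued), so $F(Z(\cdot))$ is always well defined. From \eqref{Eq:1020:2}, $Z$ is a pure-jump process whose jump at an atom $(s,u)$ of $N$ equals $h(\Lambda(s-;s_2,i),u)-h(\Lambda(s-;s_1,i),u)$; writing $k=\Lambda(s-;s_2,i)$ and $l=\Lambda(s-;s_1,i)$, the associated generator applied to $F$ is precisely $\mathcal{L}F(k,l)$ as defined in \eqref{Eq:1020:3}.

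Next I would apply the product rule together with Itô's formula for jump processes. Since $v$ is $\mathscr{C}^1$ (hence continuous of finite variation) and $F(Z(\cdot))$ is pure jump, there is no covariation term, so
$$v(s)F(Z(s))=v(s_2)F(Z(s_2))+\int_{s_2}^{s}v'(r)F(Z(r))\,\d r+\int_{s_2}^{s}v(r)\,\d\big(F(Z(r))\big),$$
where the last integral is driven by $N(\d r,\d u)$. Compensating $N$ by its intensity $\mathfrak{m}(\d u)\,\d r$ and using that, by (\hyperlink{B1}{B1}), the jump intensity $\mathfrak{m}(U_k)$ is bounded by $M$ while $F$ is bounded and $v$ is bounded on $[s_2,t]$, the compensated stochastic integral is a square-integrable martingale; optional stopping at the bounded stopping time $t\wedge\tau$ therefore yields
\begin{equation*}
\begin{aligned}
\E\big[v(t\wedge\tau)F(Z(t\wedge\tau))\big]
&=v(s_2)\,\E\big[F(Z(s_2))\big]+\E\int_{s_2}^{t\wedge\tau}v'(s)F(Z(s))\,\d s\\
&\quad+\E\int_{s_2}^{t\wedge\tau}v(s)\,\mathcal{L}F\big(\Lambda(s-;s_2,i),\Lambda(s-;s_1,i)\big)\,\d s.
\end{aligned}
\end{equation*}

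Finally I would invoke the dissipativity \eqref{Eq:1020:4}. By the very definition of $\tau$, for every $s\in(s_2,\tau)$ one has $\Lambda(s-;s_2,i)\neq\Lambda(s-;s_1,i)$, so $\mathcal{L}F(\Lambda(s-;s_2,i),\Lambda(s-;s_1,i))\leq-1$; since $v\geq0$, the last term above is bounded by $-\E\int_{s_2}^{t\wedge\tau}v(s)\,\d s$ (the single endpoint $s=\tau$ is Lebesgue-null). Moving this term to the left and discarding the non-negative quantity $\E[v(t\wedge\tau)F(Z(t\wedge\tau))]$, legitimate because both $v$ and $F$ are non-negative, gives exactly \eqref{Eq:1020:5}. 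The main obstacle is the rigorous justification of the Dynkin/Itô step: one must verify that the compensated Poisson integral is a genuine mean-zero martingale so that optional stopping at $t\wedge\tau$ is valid, which is where the boundedness of $F$ and of the intensity from (\hyperlink{B1}{B1}) is essential, and one must confirm that the product formula produces no extra covariation term, which holds because $v$ is continuous of finite variation.
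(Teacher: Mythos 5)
Your proof is correct and follows essentially the same route as the paper: both apply the (generalized) It\^o/Dynkin formula to $v(s)F\left(\Lambda(s;s_2,i)-\Lambda(s;s_1,i)\right)$ stopped at $t\wedge\tau$, invoke the drift condition $\mathcal{L}F(k,l)\leq -1$ before the coupling time, and discard the non-negative boundary term. Your version is in fact slightly more careful than the paper's, since you justify the martingale/optional-stopping step explicitly and correctly note that the bound $\mathcal{L}F\leq -1$ is only needed (and only holds) for $k\neq l$, which suffices on $[s_2,\tau)$.
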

\begin{proof}
It follows from \eqref{Eq:1020:3} and \eqref{Eq:1020:4} that $\mathcal{L} F(k, l) \leqslant-1,$ for any $k,l\in \S$. Let \( G(t, k, l)=v(t) F(k-l) \).  Applying \ito's formula to $G(t,  \Lambda(t;s_2,i),  \Lambda(t;s_1,i))$ implies
\begin{equation}
\begin{aligned}
    \mathbb{E}&G\left(t\wedge \tau ,  \Lambda(t\wedge \tau;s_2,i),  \Lambda(t\wedge \tau;s_1,i)\right)\\
    &=\E G(s_2,\Lambda(s_2;s_2,i),\Lambda(s_2;s_1,i))+\mathbb{E}\int_{s_2}^{t\wedge\tau}\Big(v^{\prime}(s)F(\Lambda(s;s_2,i)-\Lambda(s;s_1,i))\\
    &\qquad+v(s)\mathcal{L}F(\Lambda(s;s_2,i),\Lambda(s;s_1,i))\Big)\d s\\
    &\leq v(s_2)\E F(i-\Lambda(s_2;s_1,i))+\mathbb{E}\int_{s_2}^{t\wedge\tau}\Big(v^{\prime}(s)F(\Lambda(s;s_2,i)-\Lambda(s;s_1,i))-v(s)\Big)\d s.
\end{aligned}
\end{equation}
Note that $F$ and $v$ are non-negative. This gives us the required assertion.
\end{proof}
\begin{remark}
    For any function $V$ on $\mathbb{Z}$, by \eqref{EQ:0916:01} and \eqref{Eq:1020:3},
    \[\mathcal{L}V(k,l) = \sum_{m,n\in\mathbb{S}} \left[ V(m - n) - V(k - l) \right] \cdot \mathfrak{m}(\triangle_{km} \cap \triangle_{ln}).\] 
\end{remark}
\begin{remark}
    Applying \eqref{Eq:1020:5} to \( v(t)=\e^{\theta t}~(\theta>0) \), we obtain
\[
\begin{aligned}
    \theta^{-1} \mathbb{E}\left(\e^{\theta(t \wedge \tau)}-\e^{\theta s_2}\right) &\leqslant \e^{\theta s_2}\E F(i-\Lambda(s_2;s_1,i))\\
&\quad +\theta\mathbb{E}\int_{s_2}^{t\wedge\tau}\e^{\theta s}F(\Lambda(s;s_2,i)-\Lambda(s;s_1,i))\d s\\
&\leq \|F\|_{\infty}\e^{\theta s_2}+\|F\|_{\infty} \mathbb{E}\left(\e^{\theta(t \wedge \tau)}-\e^{\theta s_2}\right),
\end{aligned}
\]
where $\|F\|_{\infty}:=\sup_{k\in\S}F(k)<\infty$. Thus, by Fatou's lemma, we have
\[
\mathbb{E} \e^{\theta \tau}=\mathbb{E} \left(\lim_{t\to\infty}\e^{\theta (t\wedge\tau)}\right)\leq \liminf_{t\to\infty}\mathbb{E} \e^{\theta (t\wedge\tau)} \leqslant \e^{\theta s_2}\left(1+\frac{\|F\|_{\infty}}{\theta^{-1}-\|F\|_{\infty}}\right)=\frac{\e^{\theta s_2}}{1-\theta\|F\|_{\infty}}, 
\]
for $0<\theta<1/\|F\|_{\infty}<\infty $. Choosing a $\theta^F$ such that $0<\theta^F<1/\|F\|_{\infty}$,  by Markov's inequality, we have
\begin{equation}\label{Eq:1019:1}
   \mathbb{P}(\tau>t)=\mathbb{P}(\e^{\theta^F\tau}>\e^{\theta^F t})\leq  \frac{\E{\e^{\theta^F\tau}}}{\e^{\theta^F t}}\lesssim  \e^{-\theta^F(t-s_2)},\; t\geq s_2.
\end{equation}
\end{remark}

\begin{theorem}\label{thm:matrix}
		Let (\hyperlink{A1}{A1})-(\hyperlink{A3}{A3}), (\hyperlink{B1}{B1}) and (\hyperlink{B2}{B2}) hold, and let assumptions of Lemma \ref{Eq:1020:6} hold. For the partition $\mathcal{M}$ given in \eqref{EQ:PARTITION:01},  assume further that $${\mathcal{A}}^F:=-\left(Q^{F}+\operatorname{diag}\left(\alpha^{F}(1)-2\lambda_1^F(1)+L, \ldots, \alpha^{F}(m)-2\lambda_1^F(m)+L\right)\right) H_{m} $$ is a non-singular $M$-matrix, where $H_m$ is defined by \begin{equation}\label{eq:mingji}
			H_{m}=\left(\begin{array}{ccccc}
				1 & 1 & 1 & \cdots & 1 \\
				0 & 1 & 1 & \cdots & 1 \\
				\vdots & \vdots & \vdots & \cdots & \vdots \\
				0 & 0 & 0 & \cdots & 1
			\end{array}\right)_{m \times m},
		\end{equation} 
	so $\eta^{F}=\left(\eta^{F}(1), \ldots,\eta^{F}(m)\right)^{\top} :=({\mathcal {A}}^F)^{-1}\vec{1}\gg 0 $.  Let \( \xi^{F}=H_{m} \eta^{F} \). If $$  (\beta^F(k)+L)\xi^F(k)<1,\; \text{for all}\; 1\leq k\leq m,$$
		then the FSPDEwM \eqref{EQ:FRSPDE:01} has a unique invariant measure \( \mu \in \mathscr{P}(\mathscr{C}_r) \), which is also exponentially mixing.\end{theorem}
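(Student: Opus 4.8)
The plan is to transplant the entire remote-start argument of Theorem \ref{THM:MAIN:01} to the infinite state space, replacing the finite-state Lyapunov vector $\xi=\mathcal{A}^{-1}\vec{1}$ by a vector that is constant on each partition block, and replacing the irreducibility-based tail bound \eqref{Eq:1018:2} by the exponential estimate \eqref{Eq:1019:1} furnished by Lemma \ref{Eq:1020:6}.

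First I would construct the Lyapunov function on $\S$. Define $\xi:\S\to(0,\infty)$ by $\xi(k)=\xi^F(d)$ whenever $k\in F_d$, where $\xi^F=H_m\eta^F$ and $\eta^F=({\mathcal{A}}^F)^{-1}\vec{1}$. Since ${\mathcal{A}}^F$ is a non-singular $M$-matrix, Proposition \ref{prop:aini} gives $\eta^F\gg0$, and because $H_m$ is upper triangular with ones on and above the diagonal, $\xi^F(k)=\sum_{l\geq k}\eta^F(l)$ is strictly decreasing and $\gg0$; in particular $0<\xi^F_{\min}:=\min_{1\le d\le m}\xi^F(d)$ and $\xi^F_{\max}:=\max_{1\le d\le m}\xi^F(d)<\infty$, as there are only finitely many blocks. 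The key point — which I expect to be the crux — is to verify the pointwise dissipativity inequality
\[
\sum_{l\in\S}q_{kl}\xi(l)+\big(\alpha(k)-2\lambda_1(k)+L\big)\xi(k)\le -1,\qquad k\in\S .
\]
This is exactly where the inf/sup definition of $Q^F$ in \eqref{eq:xinleng}, the definitions \eqref{eq:xinffu} of $\alpha^F,\lambda_1^F,\beta^F$, and the auxiliary matrix $H_m$ must conspire: using the monotonicity of $\xi^F$ together with the comparison of the aggregated transition rates of $(\Lambda(t))_{t\geq 0}$ against $Q^F$, the block-constant $\xi$ inherits from the relation ${\mathcal{A}}^F\eta^F=\vec{1}$ the required supersolution property. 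This step is adapted from \cite[Section 4]{SHAO2015SPA}, and the careful bookkeeping between $Q$ on $\S$ and $Q^F$ on $\{1,\dots,m\}$ is the main obstacle.

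Once $\xi$ is in hand and bounded above and away from zero, I would repeat verbatim the calculations of Lemma \ref{lem:yizhiyoujie}, Lemma \ref{Lem:1013:1}, Theorem \ref{Thm:4.3} and Theorem \ref{Thm:4.4}, with $\xi_{max},\xi_{min}$ replaced by $\xi^F_{\max},\xi^F_{\min}$ and $K=\max_{1\le d\le m}(\beta^F(d)+L)\xi^F(d)$. The hypothesis $(\beta^F(k)+L)\xi^F(k)<1$ guarantees $K<1$, so that the constants $K_1(\lambda)$ and $K_2(\hat\lambda)$ can be made non-positive for small $\lambda,\hat\lambda\in(0,2r)$; this yields the uniform bound $\E\|X_t(s,(\varphi,i))\|_r^2\lesssim 1+\e^{-\lambda(t-s)}\|\varphi\|_r^2$ and the contraction $\E\|X_t(s,(\varphi,i))-X_t(s,(\psi,i))\|_r^2\lesssim\|\varphi-\psi\|_r^2\,\e^{-\hat\lambda(t-s)}$ exactly as before. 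All the Itô-formula and BDG estimates are insensitive to $N=\infty$ once a bounded, strictly positive $\xi$ exists.

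Finally I would run the three-step remote-start scheme of Theorem \ref{THM:MAIN:01}. On the product space I again split
\[
\widetilde{\E}\,d\big((X_t(s_1,(\varphi,i)),\Lambda(t;s_1,i)),(X_t(s_2,(\varphi,i)),\Lambda(t;s_2,i))\big)=J_1(t)+J_2(t)
\]
along the coupling time $\tau$ from \eqref{Eq:01041}. The term $J_1$ is controlled by the strong Markov property and the segment-process contraction (Theorem \ref{Thm:4.4}) exactly as in \eqref{eq:dianying}. For $J_2$, the only place where finiteness of $\S$ was used in Section \ref{sec:4} is the tail bound \eqref{Eq:1018:2}; here I replace it by \eqref{Eq:1019:1}, i.e. $\widetilde{\P}(\tau>t)\lesssim \e^{-\theta^F(t-s_2)}$, which follows from the assumed existence of a bounded non-negative $F$ with $\mathcal{L}F(k,l)\le-1$ via Lemma \ref{Eq:1020:6} and the ensuing remark. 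Hölder's inequality then gives $J_2\lesssim(1+\|\varphi\|_r)\e^{-\theta^F(t-s_2)/4}$, and with $\kappa=\hat\lambda/4\wedge\theta^F/4$ one obtains the Cauchy estimate in $L^1(\Omega;\mathscr{C}_r\times\S)$, the limit $\eta_t$ independent of the initial datum, the invariant measure $\mu=\P\circ\eta_0^{-1}$, its uniqueness, and the exponential mixing bound, word for word as in the finite-state proof.
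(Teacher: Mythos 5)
Your proposal is correct and follows essentially the same route as the paper: the block-constant Lyapunov vector $\xi(k)=\xi^F(h(k))$ built from $\xi^F=H_m\eta^F$, the comparison $(Q\xi)(k)\le (Q^F\xi^F)(h(k))$ via the monotonicity of $\xi^F$ and the inf/sup definition of $Q^F$ (which, combined with $\alpha(k)\le\alpha^F(h(k))$, $\lambda_1(k)\ge\lambda_1^F(h(k))$ and $-(Q^F+D)\xi^F=\vec{1}$, gives exactly your pointwise supersolution inequality), the rerun of the boundedness and contraction estimates with $M^F<1$, and the substitution of the tail bound \eqref{Eq:1019:1} for \eqref{Eq:1018:2} in the coupling step before concluding as in Theorem \ref{THM:MAIN:01}. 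The only thing you leave as a sketch, the verification of the dissipativity inequality, is precisely the computation \eqref{eq:aixipo} carried out in the paper, and your described mechanism for it is the right one.
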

	\begin{proof}
 Note that $\xi^F = H_m \eta^F$. Then, we have
\[
\xi^{F}(d) = \eta^{F}(d) + \cdots + \eta^{F}(m), \quad d = 1, \ldots, m.
\]
Thus, we obtain $\xi^{F}(d+1) < \xi^{F}(d)$ for $d = 1, \ldots, m-1$, and $\xi^{F} \gg 0$.
Let us extend the vector ${\xi^{F}}$ to a vector on ${\mathbb{S}}$ by setting ${\xi({k})=\xi^{F}({d})}$, if ${k \in F_d}$. Thus, by \eqref{eq:xinleng}, we obtain 
	\begin{equation}\label{eq:aixipo}
		\begin{aligned}
			(Q \xi)(k) &=\sum_{ l \neq k} q_{k l}\left(\xi(l)-\xi(k)\right)=\sum_{l \notin F_d} q_{k l}\left(\xi(l)-\xi(k)\right) \\
			&=\sum_{j<d}\Big(\sum_{l \in F_{j}} q_{k l}\Big)\left(\xi^{F}(j)-\xi^{F}({d})\right)+\sum_{j>d}\Big(\sum_{l \in F_{j}} q_{k l}\Big)\left(\xi^{F}(j)-\xi^{F}({d})\right) \\
   &\leq \sum_{j<d}\Big(\sup_{k\in F_d}\sum_{l \in F_{j}} q_{k l}\Big)\left(\xi^{F}(j)-\xi^{F}({d})\right)+\sum_{j>d}\Big(\inf_{k\in F_d}\sum_{l \in F_{j}} q_{k l}\Big)\left(\xi^{F}(j)-\xi^{F}({d})\right) \\
			& = \sum_{j<d} q_{d j}^{F}\left(\xi^{F}(j)-\xi^{F}(d)\right)+\sum_{j>d} q_{d j}^{F}\left(\xi^{F}(j)-\xi^{F}(d)\right)=\left(Q^{F} \xi^{F}\right)(d),
		\end{aligned}
	\end{equation}
	for any ${k \in F_d}$. Let ${h: \mathbb{S} \rightarrow\{1, \ldots, m\}}$ be a map defined by ${h(k)=d}$ if ${k\in F_d}$. Hence, $\xi(k)=\xi^F(h(k))$, $\lambda_1(k)\geq \lambda_1^F(h(k))$, ${\alpha(k) \leq \alpha^{F}(h(k))}$, ${\beta(k) \leq \beta^{F}(h(k))}$ and $(Q \xi)(k)\leq \left(Q^{F} \xi^{F}\right)(h(k))$  for any ${k \in \mathbb{S}}$.

 In what follows, we shall show that \eqref{EQ:1013:02} and   \eqref{Eq:1018:1} also hold in the case of infinite state space.   Noting that $\mathcal{A}^F$ is a non-singular $M$-matrix, we have 
\[
(Q^F\xi)(h(k))+(\alpha^F(h(k))-2\lambda_1^F(h(k))+L)\xi^F(h(k))=-1,
\]
for any $k\in \S$. Let $\lambda\in (0,2r)$ be a constant  to be determined later. 
According to (\hyperlink{B2}{B2}),   \eqref{Eq:1012:1} remains valid.
Thus, applying  the generalized \ito~ formula and using \eqref{Eq:6}, we have 
\begin{align*}
		\e ^{\lambda t} &\E\left [\|X(t)\|_H^2\xi(\Lambda(t))\right] \\
 &\leq \e ^{\lambda s}\E[\|X(s)\|_H^2\xi(\Lambda(s))] +C(\varepsilon)\int_s^t\e^{\lambda u}\xi(\Lambda(u))\d u+ \lambda\E \int_{s}^{t}\e ^{\lambda u}  \|X(u)\|_H^2\xi(\Lambda(u))\d u\\
 &\quad+\E \int_{s}^{t}\e ^{\lambda u}  \big( (Q\xi)(\Lambda(u))+(\alpha(\Lambda(u))-2\lambda_1(\Lambda(u))+L+2\varepsilon)\xi(\Lambda(u))\big)\|X(u)\|_H^2\d u \\
 &\quad+\E \int_{s}^{t}\int_{-\infty}^0\e ^{\lambda u}(\beta(\Lambda(u))+L+\varepsilon)\xi(\Lambda(u))\|X(u+\theta)\|_H^2\rho(\d \theta)\d u\\
& \leq \e ^{\lambda s}\E[\|X(s)\|_H^2\xi^F(h(\Lambda(s)))] +\frac{C(\varepsilon)\xi_{max}^F}{\lambda}\e^{\lambda t}+\lambda\xi_{max}^F \E \int_{s}^{t}\e ^{\lambda u}  \|X(u)\|_H^2\d u\\
 &\quad+\E \int_{s}^{t}\e ^{\lambda u}  \Big( (Q^F\xi)(h(\Lambda(u)))+\big(\alpha^F(h(\Lambda(u)))-2\lambda_1^F(h(\Lambda(u)))\\
&\qquad\qquad\qquad\quad+L+2\varepsilon\big)\xi^F(h(\Lambda(u)))\Big)\|X(u)\|_H^2\d u \\
 &\quad+\E \int_{s}^{t}\int_{-\infty}^0\e ^{\lambda u}(\beta^F(h(\Lambda(u)))+L+\varepsilon)\xi^F(h(\Lambda(s)))\|X(u+\theta)\|_H^2\rho(\d \theta)\d u\\
 &\leq \xi_{max}^F\e ^{\lambda s}\|\varphi\|_r^2+\frac{C(\varepsilon)\xi_{max}^F}{\lambda}\e^{\lambda t}+\left((\lambda+2\varepsilon)\xi_{max}^F-1\right)\E \int_{s}^{t}\e ^{\lambda u}\|X(u)\|_H^2\d u\\
 &\quad+\left(M^F+\varepsilon\xi_{max}^F\right)\E \int_{s}^{t}\int_{-\infty}^0\e ^{\lambda u}\|X(u+\theta)\|_H^2\rho(\d \theta)\d u\\
 &\leq \widetilde{C}(\varepsilon) \e ^{\lambda s}\|\varphi\|_r^2+\frac{C(\varepsilon)\xi_{max}}{\lambda}\e^{\lambda t}+K^F(\lambda)\E \int_{s}^{t}\e ^{\lambda u}\|X(u)\|_H^2\d u,
\end{align*}
where $M^F=\max_{1\leq d\leq m}(\beta^F(d)+L)\xi^F(d)$, $\widetilde{C}^F(\varepsilon)=\left(1+\frac{\rho^{(2r)}(\beta_{max}^F+L+\varepsilon)}{2r-\lambda}\right)\xi_{max}^F$ and 
\[
K^F(\lambda)=(\lambda+2\varepsilon)\xi_{max}^F-1+(M^F+\varepsilon\xi_{max}^F)\int_{-\infty}^{0}\e ^{-\lambda \theta}\rho(\d \theta).
\]
Since $(\beta^F(d)+L)\xi^F(d)<1$ for any $1\leq d\leq m$, then $M^F<1$. Moreover, $K^F(0)=3\varepsilon\xi_{max}^F-1+M^F$, so we can choose $\varepsilon>0$ and $\lambda\in (0,2r)$ to be sufficiently small such that $K^F(\lambda)\leq 0$, which implies that \eqref{Eq:1015:1} holds again. Similarly, we can also prove that there exists a constant $\lambda^F>0$ such that   \eqref{EQ:GAMMA:01} holds too. 
Based on the above results, we derive \eqref{EQ:1013:02} and \eqref{Eq:1018:1} by employing arguments analogous to those in Theorem \ref{Thm:4.3} and Theorem \ref{Thm:4.4}, respectively.

Furthermore, by the strong Markov property, we obtain that for any different initial times $-\infty<s_1 \leq s_2 \leq t <\infty$,
\begin{align*}
\widetilde{\mathbb{E}}&\Big[  d\Big(\big(X_{t}(s_1,(\varphi, i)), \Lambda(t;s_1,i)\big),\big(X_{t}(s_2,(\varphi, i)), \Lambda(t;s_2,i)\big)\Big)\Big] \\
&=  \widetilde{\mathbb{E}}\left[\widetilde{\mathbb{E}}\left[\left(\|X_{t}(s_1,(\varphi, i))-X_{t}(s_2,(\varphi, i))\|_{r}+\ell\left(\Lambda(t;s_1,i), \Lambda(t;s_2,i)\right)\right)\1_{\{\tau \leq (t+s_2) / 2\}} \mid \widetilde{\mathscr{H}}_{\tau}\right]\right] \\
&\quad +\widetilde{\mathbb{E}}\left[(\left\|X_{t}(s_1,(\varphi, i))-X_{t}(s_2,(\varphi, i))\right\|_{r}+\ell\left(\Lambda(t;s_1,i), \Lambda(t;s_2,i)\right))\1_{\{\tau>(t+s_2) / 2\}}\right] \\
& \leq \widetilde{\mathbb{E}}\left[ \widetilde{\mathbb{E}}\left[\left\|X_{t}(\tau,(X_{\tau}(s_1,(\varphi, i)), \Lambda(\tau;s_1,i))-X_{t}(\tau,(X_{\tau}(s_2,(\varphi, i))\Lambda(\tau;s_2,i))\right\|_{r}\right]\1_{\{\tau \leq (t+s_2) / 2\}}\right] \\
&\quad+\Big[\widetilde{\mathbb{E}}\big(1+\left\|X_{t}(s_1,(\varphi, i))-X_{t}(s_2,(\varphi, i))\right\|_{r}\big)^{2}\Big]^{1 / 2} [\mathbb{P}(\tau>(t+s_2) / 2)]^{1 / 2}  \\
& \lesssim \widetilde{\mathbb{E}}\left(\e^{-\lambda^F(t-\tau)/2} \left\|X_{\tau}(s_1,(\varphi, i))-X_{\tau}(s_2,(\varphi, i))\right\|_{r}\1_{\{\tau \leq (t+s_2) / 2\}}\right) \\
&\quad+\left(1+ \widetilde{\mathbb{E}}\left\|X_{t}(s_1,(\varphi, i))-X_{t}(s_2,(\varphi, i))\right\|_{r}^{2}\right)^{1 / 2}\mathrm{e}^{-\theta^F (t-s_2) / 4} \\
& \lesssim \widetilde{\mathbb{E}}\left(\left\|X_{\tau}(s_1,(\varphi, i))\right\|_{r}+\left\|X_{\tau}(s_2,(\varphi, i))\right\|_{r}\right) \e^{-\lambda^F (t-s_2) / 4} \\
&\quad+\left(1+ \widetilde{\mathbb{E}}\left\|X_{t}(s_1,(\varphi, i))\right\|_{r}^{2}+ \widetilde{\mathbb{E}}\left\|X_{t}(s_2,(\varphi, i))\right\|_{r}^{2}\right)^{1 / 2} \mathrm{e}^{-\theta^F (t-s_2) / 4}\\
& \lesssim\left(1+\|\varphi\|_{r}\right) \e^{-\kappa^F (t-s_2) / 4},
\end{align*}
where we have used \eqref{Eq:1019:1} and  $\kappa^F=\lambda^F\wedge\theta^F$. 
Then, by following a similar argument just as in Theorem \ref{THM:MAIN:01}, we can conclude the proof.
 	\end{proof}

\begin{appendix}
\renewcommand{\theequation}{\Alph{section}.\arabic{equation}}
 \section{}\label{Appendix}
 In this appendix, we establish the existence and uniqeness of mild solutions about FSPDEs without Markovian switching. For this aim, consider the following semi-linear functional stochastic partical differential equation
\begin{equation}\label{EQ:GENERAL:01}
\d X(t) =[AX(t)+B(X_t)]\d t+\Sigma(X_t)\d W(t),\quad X_0=\varphi,
\end{equation}
where $A$ is a linear operator with domain $\mathscr{D}(A)$ and generating a contractive $C_0$-semigroup, $B:\mathscr{C}_r\to H$, and $\Sigma:\mathscr{C}_r\to L_2(U;H)$ are measurable mappings.
In what follows, we prove the well-posedness of Eq.\ \eqref{EQ:GENERAL:01} under global (local) Lipschitz condition, respectively.  First, we propose a basic assumption:

\noindent(\hypertarget{(H0)}{H0})  $ (-A, \mathscr{D}(A)) $ is a self-adjoint operator with discrete spectrum
\[ 
0<\lambda_{1} \leq \lambda_{2} \leq \cdots \leq \lambda_{n} \leq \cdots,
 \]
where multiplicities are counted, and such that $ \lambda_{n} \uparrow \infty, n \rightarrow \infty $. Furthermore, $A $ can generate a $ C_0 $-semigroup \( (\e^{t A{}})_{t \geq 0} \) satisfying \( \|\e^{t A}\| \leqslant \e^{-\lambda_{1} t}, t \geqslant 0 \). 
\begin{lemma}\label{Lem:1113}
    Assume that  (\hyperlink{H0}{H0}) holds, and let \(B(\cdot)\) and \( \Sigma(\cdot) \) satisfy the global Lipschitz condition. Then for any initial data \( \varphi \in \mathscr{C}_r \), Eq.\ \eqref{EQ:GENERAL:01} admits a unique mild solution \( (X(t))_{t \geq 0} \).
\end{lemma}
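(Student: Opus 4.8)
The plan is to construct the solution by a Banach fixed point (Picard) argument applied to the mild formulation. Fix $T>0$ and let $\mathcal{B}_T$ denote the space of $H$-valued predictable processes $Y$ on $(-\infty,T]$ with $Y_0=\varphi$ and
\[
\|Y\|_{\mathcal{B}_T}^2:=\E\Big(\sup_{0\le t\le T}\|Y(t)\|_H^2\Big)<\infty,
\]
which is complete. On $\mathcal{B}_T$ define the map $\mathcal{I}$ by
\[
(\mathcal{I}Y)(t)=\e^{tA}\varphi(0)+\int_0^t\e^{(t-s)A}B(Y_s)\,\d s+\int_0^t\e^{(t-s)A}\Sigma(Y_s)\,\d W(s),\quad 0\le t\le T,
\]
together with $(\mathcal{I}Y)_0=\varphi$. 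A fixed point of $\mathcal{I}$ is precisely a mild solution on $[0,T]$, and uniqueness on $[0,T]$ together with a patching argument over successive intervals $[0,T],[T,2T],\dots$ yields the global solution.

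First I would check that $\mathcal{I}$ maps $\mathcal{B}_T$ into itself. The global Lipschitz hypothesis gives the linear growth bounds $\|B(\mathbf{x})\|_H\le \|B(\mathbf{0})\|_H+L_B\|\mathbf{x}\|_r$ and $\|\Sigma(\mathbf{x})\|_{L_2}\le\|\Sigma(\mathbf{0})\|_{L_2}+L_\Sigma\|\mathbf{x}\|_r$, so both integrands are predictable with finite second moments whenever $Y\in\mathcal{B}_T$. Using the contractivity $\|\e^{tA}\|\le \e^{-\lambda_1 t}\le 1$ from (\hyperlink{(H0)}{H0}), the drift term is dominated by $\int_0^t\|B(Y_s)\|_H\,\d s$ and handled by Cauchy--Schwarz. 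For the stochastic convolution the semigroup sits inside the integral, so the integrand is not a martingale; here I would invoke the \emph{factorization method}: write the convolution as $c_\alpha\int_0^t(t-\sigma)^{\alpha-1}\e^{(t-\sigma)A}Y_\alpha(\sigma)\,\d\sigma$ with $Y_\alpha(\sigma)=\int_0^\sigma(\sigma-s)^{-\alpha}\e^{(\sigma-s)A}\Sigma(Y_s)\,\d W(s)$ for some $\alpha\in(0,1/2)$, estimate $Y_\alpha$ in $L^2(\Omega;L^p(0,T;H))$ by the Burkholder--Davis--Gundy inequality, and recover the supremum via Young's convolution inequality. This yields a maximal bound of the form $\E\sup_{0\le t\le T}\|\int_0^t\e^{(t-s)A}\Sigma(Y_s)\,\d W(s)\|_H^2\le C_T\int_0^T\E\|\Sigma(Y_s)\|_{L_2}^2\,\d s$, with $C_T$ bounded on finite intervals and $C_T\to0$ as $T\to0$.

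The contraction estimate rests on one structural observation that tames the infinite delay: since any two candidates $X,Y\in\mathcal{B}_T$ share the fixed initial segment $\varphi$ on $(-\infty,0]$, the difference $X(u)-Y(u)$ vanishes for $u\le0$, whence, using $\e^{r\theta}\le1$ for $\theta\le0$,
\[
\|X_s-Y_s\|_r=\sup_{-s\le\theta\le0}\e^{r\theta}\|X(s+\theta)-Y(s+\theta)\|_H\le\sup_{0\le u\le s}\|X(u)-Y(u)\|_H.
\]
Thus $\E\|X_s-Y_s\|_r^2\le\|X-Y\|_{\mathcal{B}_T}^2$ for all $s\le T$, reducing the history norm to a finite-horizon supremum. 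Combining the Lipschitz bounds with the drift estimate and the stochastic-convolution maximal inequality then gives
\[
\|\mathcal{I}X-\mathcal{I}Y\|_{\mathcal{B}_T}^2\le \big(C\,L_B^2\,T^2+C\,C_T\,L_\Sigma^2\,T\big)\,\|X-Y\|_{\mathcal{B}_T}^2,
\]
so $\mathcal{I}$ is a contraction once $T$ is small enough. Banach's fixed point theorem then provides a unique mild solution on $[0,T]$, and concatenating solutions over consecutive intervals of length $T$ (each started from the previous terminal segment, which again lies in $\mathscr{C}_r$ by the continuity supplied by the factorization) extends it uniquely to $[0,\infty)$. The hard part will be the stochastic convolution: because the non-time-homogeneous factor $\e^{(t-s)A}$ blocks a direct martingale maximal inequality, the factorization argument is the delicate ingredient, and one must verify that the resulting constant $C_T$ genuinely vanishes with $T$ so that the contraction survives its coupling with the infinite-delay norm.
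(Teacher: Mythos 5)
Your overall architecture (Banach fixed point on the mild formulation, reduction of the infinite-delay norm by noting the two candidates agree on $(-\infty,0]$, patching over successive intervals) matches the paper's, but the pivotal step --- the maximal estimate for the stochastic convolution --- has a genuine gap as you have set it up. The factorization method cannot produce the bound
\begin{equation*}
\E\sup_{0\le t\le T}\Big\|\int_0^t \e^{(t-s)A}\Sigma(Y_s)\,\d W(s)\Big\|_H^2\le C_T\int_0^T\E\|\Sigma(Y_s)\|_{L_2}^2\,\d s .
\end{equation*}
Indeed, square-integrability of the kernel in $Y_\alpha(\sigma)=\int_0^\sigma(\sigma-s)^{-\alpha}\e^{(\sigma-s)A}\Sigma(Y_s)\,\d W(s)$ forces $\alpha<1/2$, while recovering the supremum from $c_\alpha\int_0^t(t-\sigma)^{\alpha-1}\e^{(t-\sigma)A}Y_\alpha(\sigma)\,\d\sigma$ by H\"older (or Young's convolution inequality) requires $\sigma\mapsto\sigma^{\alpha-1}$ to lie in $L^q(0,T)$ with $q$ conjugate to the integrability exponent $p$ of $Y_\alpha$, i.e.\ $(1-\alpha)q<1$, equivalently $p>1/\alpha>2$. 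So factorization only yields $L^p(\Omega;C([0,T];H))$ bounds for $p>2$, expressed through $p$-th moments of $\Sigma(Y_\cdot)$ --- incompatible with your second-moment space $\mathcal{B}_T$. (A side remark: your worry that $C_T$ must vanish as $T\to 0$ is unnecessary; the extra factor $T$ coming from the time integral of the Lipschitz bound already gives contraction for small $T$, so boundedness of $C_T$ would suffice --- if the estimate held.)

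Two repairs are available. Either invoke a maximal inequality that is genuinely valid at $p=2$ for contraction semigroups --- Kotelenez's inequality, or the Hausenblas--Seidler unitary-dilation argument, both applicable here since (H0) gives $\|\e^{tA}\|\le\e^{-\lambda_1 t}\le 1$ --- or upgrade $\mathcal{B}_T$ to fourth moments so that a $p>2$ argument applies. The paper effectively takes the second option while avoiding convolution estimates altogether: it observes that $\mathscr{K}(U)$ solves $\d\mathscr{K}(U)(t)=[A\mathscr{K}(U)(t)+B(U_t)]\,\d t+\Sigma(U_t)\,\d W(t)$, applies It\^o's formula to $\|\mathscr{K}(U)(t)\|_H^2$ using the dissipativity $\langle x,Ax\rangle_H\le-\lambda_1\|x\|_H^2$ from (H0), so that the Burkholder--Davis--Gundy inequality is applied to the \emph{genuine} martingale $\int_0^t\langle\mathscr{K}(U)(s),\Sigma(U_s)\,\d W(s)\rangle$ rather than to the convolution; the price is exactly the fourth-moment weighted space $\E\big(\sup_{t}\e^{4rt}\|U(t)\|_H^4\big)<\infty$, because BDG produces the cross term $\|\mathscr{K}(U)(s)\|_H^2\|\Sigma(U_s)\|_{L_2}^2$, which Young's inequality splits into fourth powers. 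With either repair, the rest of your argument (the history-norm reduction and the patching) goes through.
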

\begin{proof}
    Following the approach outlined in \cite[Theorem 5.1]{BAO2019SPA}, we prove this lemma by using the classical fixed point theorem. It suffices to show that Eq.\ \eqref{EQ:GENERAL:01} admits a unique mild solution \( (X(t))_{t\in [0,T]} \)  for any fixed $T > 0$. Let
\begin{align*}
\mathscr{H}_{T}&=\bigg\{U=(U(t))_{t \in(-\infty, T]}\mid U  \text{~is a continuous adapted process on~} {H}  \text{~with~}U_{0}=\varphi
\text { and } \\
& \qquad\qquad \mathbb{E}\Big(\sup _{t \in(-\infty, T]}\mathrm{e}^{4r t}\|U(t)\|_{H}^{4}\Big)<\infty\bigg\}.
\end{align*}
Then \( \mathscr{H}_{T} \) is a complete metric space with
\[
\rho(U, V):=\|U-V\|_{\mathscr{H}_{T}}:=\bigg[\mathbb{E}\Big(\sup _{t \in[0, T]}\mathrm{e}^{4r t}\|U(t)-V(t)\|_H^{4}\Big)\bigg]^{\frac{1}{4}} .
\]
Observe that the metric \( \rho \) is equivalent to the metric below
\[
\rho_{0}(U,V):=\|U-V\|_{\mathscr{H}_{T}^{0}}:=\bigg[\mathbb{E}\Big(\sup _{t \in[0, T]}\|U(t)-V(t)\|_H^{4}\Big)\bigg]^{\frac{1}{4}} .
\]
We now prove the map $\mathscr{K}$ define as follow, for any $0\leq t \leq T$, $U \in \mathscr{H}_{T}$,
\begin{equation}\label{EQ:MILDMAP:01}
    \mathscr{K}(U)(t):=\mathrm{e}^{t A} \varphi(0)+\int_{0}^{t} \mathrm{e}^{(t-s) A} B\left(U_{s}\right) \mathrm{d} s+\int_{0}^{t} \mathrm{e}^{(t-s) A} \Sigma\left(U_{s}\right) \mathrm{d} W(s)
\end{equation}
mapping from \( \mathscr{H}_{T} \)  to \( \mathscr{H}_{T} \). To be specific, for any \( U \in \mathscr{H}_{T} \), by differentiating both sides of \eqref{EQ:MILDMAP:01} we have
\[
\mathrm{d}\mathscr{K}(U)(t)=[A\mathscr{K}(U)(t)+B\left(U_{t}\right)] \mathrm{d} t+\Sigma\left(U_{t}\right) \mathrm{d} W(t).
\]
According to \cite[Proposition 2.1.4]{liu2005stability}, we obtain from (\hyperlink{(H0)}{H0}) that
\begin{equation}\label{Eq:1}
    \langle x, A x\rangle_H\leq-\lambda_{1}\|x\|_{H}^{2}, \quad x \in \mathscr{D}(A) .
\end{equation}
Recall that for any $\varepsilon>0$,
\begin{equation}\label{eq:shoushi}
2\langle a,b \rangle_H\leq \varepsilon\|a\|_H^2+\frac{1}{\varepsilon}\|b\|_H^2
\end{equation}
  for any $a,b\in H$. Hence, applying \ito's formula to \( \|\mathscr{K}(U)(t)\|_{H}^{2} \)  derives that for any $\varepsilon>0$,
\[
\begin{aligned}
 \mathrm{d}\|\mathscr{K}(U)(t)\|_H^{2}
& =  2\langle\mathscr{K}(U)(t), A\mathscr{K}(U)(t)+B\left(U_{t}\right)\rangle  \mathrm{d} t +\left\|\Sigma\left(U_{t}\right)\right\|_{ L_2}^{2} \mathrm{d} t+\mathrm{d} M(t) \\
&\leq \varepsilon \|\mathscr{K}(U)(t)\|_H^{2} \mathrm{~d} t+\Big(\frac{1}{\varepsilon}\|B\left(U_{t}\right)\|_H^2+\left\|\Sigma\left(U_{t}\right)\right\|_{ L_2}^{2} \Big)\d t+\mathrm{d} M(t),
\end{aligned}
\]
where 
\[
M(t):=2 \int_{0}^{t}\left\langle\mathscr{K}(U)(s),\Sigma\left(U_{s}\right)\mathrm{d} W(s)\right\rangle
\]
is a (local) martingale. Letting $\varepsilon= \frac{1}{2 \sqrt{5} T}$ and using the linear growth property of $B(\cdot)$ and $\Sigma(\cdot)$ implies 
\begin{equation}\label{Eq:1111:7}
    \begin{aligned}
   \|\mathscr{K}(U)(t)\|_H^{2} &\leq \|\varphi(0)\|_H^2+ C_1T+\frac{1}{2 \sqrt{5} T}\int_0^t\|\mathscr{K}(U)(s)\|_H^{2} \mathrm{~d} s  \\
   &\quad +C_{2}(1+T)\int_0^t\left\|U_{s}\right\|_{r}^{2} \mathrm{d} s+M(t),
\end{aligned}
\end{equation}
where $C_1$ and $C_2$ are some positive constants.  By BDG's inequality and Young's inequality, we obtain that there exist  two positive constants \( C_{3},C_4 \) such that
\begin{equation}\label{Eq:1111:8}
    \begin{aligned}
    \mathbb{E}\Big(\sup _{t \in[0, T]} M(t)^{2}\Big)
    &\leq 16\E\left(\int_0^T\|\mathscr{K}(U)(t)\|_H^2\|\Sigma(U_t)\|_{L_2}^2\d t\right)\\
    &\leq \frac{1}{20}\|\mathscr{K}(U)\|_{\mathscr{H}_{T}^{0}}^{4}+C_3T+C_4T\|U\|_{\mathscr{H}_{T}}^{4}
\end{aligned}
\end{equation}
Furthermore, taking the expectation on both sides of \eqref{Eq:1111:7} and using \eqref{Eq:1111:8} yields 
\begin{equation*}\label{EQ:CONTRACTION:01}
\begin{split}
\|\mathscr{K}(U)\|_{\mathscr{H}_{T}^{0}}^{4} &=\mathbb{E}\Big(\sup _{t \in[0, T]}|\mathscr{K}(U)(t)|^{4}\Big) \\
& \leq 5\|\varphi\|_r^4+5C_1^2T^2+\frac{1}{4}\|\mathscr{K}(U)\|_{\mathscr{H}_{T}^{0}}^{4}+5 C_{1}^{2}(1+T)^{2} T^{2}\|U\|_{\mathscr{H}_{T}}^{4}+5 \mathbb{E}\Big(\sup _{t \in[0, T]} M(t)^{2}\Big) \\
& \leq 5\|\varphi\|_r^4+5C_3T+5C_1^2T^2 +\frac{1}{2}\|\mathscr{K}(U)\|_{\mathscr{H}_{T}^{0}}^{4}+C(T)\|U\|_{\mathscr{H}_{T}}^{4},
\end{split}
\end{equation*}
where $C(T)=5 C_{1}^{2}(1+T)^{2} T^{2}+5C_4T$. Thus,
\begin{equation}\label{EQ:WELL:01}
    \|\mathscr{K}(U)\|_{\mathscr{H}_{T}}^{4} \leq 10\e^{4rT}(\|\varphi\|_r^4+C_3T+C_1^2T^2 )+2\e^{4rT}C(T)\|U\|_{\mathscr{H}_{T}}^{4},
\end{equation}
from which we can deduce that $\mathscr{K}$ is a map from  \( \mathscr{H}_{T} \)  to \( \mathscr{H}_{T} \). 

In the sequel, we need to prove the well-posedness by using of the fixed point theorem. It is sufficient to find \( T_{0}>0 \) independent of \( \varphi \) such that  the map \( \mathscr{K} \) is contractive in \( \mathscr{H}_{T_0} \) since the well-posedness on $[0,T]$ can be done analogously on \( \left[T_{0}, 2 T_{0}\right], \ldots, [\lfloor T/T_0 \rfloor T_0,(\lfloor T/T_0 \rfloor+1) T_0\wedge T] \). Indeed, for any \( U, V \in \mathscr{H}_{T} \), due to  \eqref{EQ:MILDMAP:01} we have
\[
\mathrm{d}[\mathscr{K}(U)(t)-\mathscr{K}(V)(t)]=[A(\mathscr{K}(U)(t)-\mathscr{K}(V)(t))+B\left(U_{t}\right)-B\left(V_{t}\right)] \mathrm{d} t+[\Sigma\left(U_{t}\right)-\Sigma\left(V_{t}\right)] \mathrm{d} W(t).
\]
Similar to  \eqref{EQ:WELL:01}, we can show that
\[
\|\mathscr{K}(U)-\mathscr{K}(V)\|_{\mathscr{H}_{T}}^{4} \leq 2\mathrm{e}^{4 r T}\widetilde{C}(T)\|U-V\|_{\mathscr{H}_{T}}^{4},
\]
where $\widetilde{C}(T)$ is a positive constant dependent of $T$. Therefore, by taking \( T_{0}>0 \) such that \( 2 \mathrm{e}^{4 r T_{0}}\widetilde{C}(T_0)<1 \), we get our desired assertion. The proof is complete.
\end{proof}
To proceed, let us  impose the following assumptions for $B(\cdot)$ and $\Sigma(\cdot)$.

\noindent (\hypertarget{(H1)}{H1}) Both \( B \) and \( \Sigma \) satisfy the local Lipschitz condition, that is, for any \( n>0 \), there exists a \( C_{n} \) such that
\[
\|B(\mathbf{x})-B(\mathbf{y})\|_H \vee\|\Sigma(\mathbf{x})-\Sigma(\mathbf{y})\|_{L_2} \leq C_{n}\|\mathbf{x}-\mathbf{y}\|_{r}
\]
for those \( \mathbf{x}, \mathbf{y} \in \mathscr{C}_{r} \) with \( \|\mathbf{x}\|_{r} \vee\|\mathbf{y}\|_{r} \leq n \).

\noindent(\hypertarget{(H2)}{H2}) There exist constants \( \alpha \in \mathbb{R} \), \( \beta>0 \)  and a probability measure \( \rho \in \mathscr{P}_{2 r}\left(\mathbb{R}^{-}\right) \) such that for any  \( \mathbf{x} , \mathbf{y} \in \mathscr{C}_{r} \), 
	\[ 
	\begin{aligned} 
		& 2\langle\mathbf{x}(0)-\mathbf{y}(0), B(\mathbf{x})-B(\mathbf{y})\rangle_{H}\leqslant  \alpha\|\mathbf{x}(0)-\mathbf{y}(0)\|_{H}^{2}+\beta \int_{-\infty}^{0} \| \mathbf{x}(\theta)-\mathbf{y}(\theta)\|_{H}^{2} \rho(\d \theta).
		\end{aligned} 
	 \]
 \noindent(\hypertarget{(H3)}{H3})   There exists  a constant \( L>0 \) such that for any  \( \mathbf{x} , \mathbf{y} \in \mathscr{C}_{r} \),
	\[
	\left\|\Sigma(\mathbf{x})-\Sigma(\mathbf{y})\right\|_{L_2}^{2}  
		 \leq L\left(\|\mathbf{x}(0)-\mathbf{y}(0)\|_{H}^{2}+\int_{-\infty}^{0}\|\mathbf{x}(\theta)-\mathbf{y}(\theta)\|_{H}^{2} \rho(\d \theta)\right),
	\]
    where $\rho$ is determined in (\hyperlink{(H2)}{H2}).
\begin{lemma}\label{Lem:1024:1}
   Assume that  (\hyperlink{H1}{H1})-(\hyperlink{H3}{H3}) hold. Then for any initial data $\varphi\in \mathscr{C}_r$, Eq.\ \eqref{EQ:GENERAL:01} admits a unique  mild solution.
\end{lemma}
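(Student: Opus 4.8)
The plan is to combine a truncation argument with a priori moment estimates furnished by the dissipativity in (H2)--(H3). First I would truncate the coefficients: for each integer $n\geq1$ let $\pi_n:\mathscr{C}_r\to\mathscr{C}_r$ be the radial retraction $\pi_n(\x)=\x$ if $\|\x\|_r\leq n$ and $\pi_n(\x)=n\x/\|\x\|_r$ otherwise, which is globally Lipschitz (with constant at most $2$) and maps into the closed ball of radius $n$. Setting $B_n:=B\circ\pi_n$ and $\Sigma_n:=\Sigma\circ\pi_n$, the local Lipschitz property (H1) shows that $B_n,\Sigma_n$ are globally Lipschitz, so Lemma \ref{Lem:1113} produces a unique global mild solution $X^{(n)}$ of the truncated equation with coefficients $B_n,\Sigma_n$ and initial datum $\varphi$.

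Next I would localize. Define the stopping times $\tau_n:=\inf\{t\geq0:\|X^{(n)}_t\|_r\geq n\}$. Since $B_n,\Sigma_n$ agree with $B_m,\Sigma_m$ on $\{\|\cdot\|_r\leq n\}$ for $m\geq n$, the global uniqueness from Lemma \ref{Lem:1113} forces $X^{(n)}=X^{(m)}$ on $[0,\tau_n]$; hence $(\tau_n)_{n\geq1}$ is non-decreasing and $X(t):=X^{(n)}(t)$ for $t\leq\tau_n$ defines a mild solution consistently on the random interval $[0,\tau_\infty)$ with $\tau_\infty:=\lim_n\tau_n$. Local uniqueness on each $[0,\tau_n]$ is inherited from Lemma \ref{Lem:1113}, and any two mild solutions must coincide with $X^{(n)}$ up to these stopping times; thus it remains only to prove $\tau_\infty=\infty$ almost surely, which simultaneously yields a global mild solution and global uniqueness.

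The core of the argument is the a priori estimate ruling out explosion. Applying \ito's formula to $\e^{2rt}\|X^{(n)}(t)\|_H^2$ on $[0,t\wedge\tau_n]$ and using the dissipativity $\langle x,Ax\rangle_H\leq0$ of the generator of a contraction semigroup, the monotonicity bound (H2) applied to the pair $(\x,\mathbf{0})$, the growth of $\Sigma$ coming from (H3), and \eqref{eq:shoushi}, I would obtain a differential inequality whose history term is treated exactly as in \eqref{Eq:6}: splitting the double integral at $\theta=s-u$ converts the weighted history norm into a multiple of $\e^{2rs}\|\varphi\|_r^2$ plus a multiple of $\int\e^{2ru}\|X^{(n)}(u)\|_H^2\d u$, the extra factor being finite because $\rho\in\mathscr{P}_{2r}(\R^-)$. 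A BDG plus Young estimate of the stochastic integral exactly as in \eqref{Eq:1230} absorbs the supremum of the martingale, and \gron's inequality then gives $\E\big(\sup_{0\leq u\leq t}\e^{2ru}\|X^{(n)}(u)\|_H^2\big)\lesssim(1+\|\varphi\|_r^2)\e^{Ct}$ with a constant $C$ \emph{independent of} $n$. Passing from this pointwise bound to the segment norm as in the decomposition \eqref{Eq:1013:1} yields $\E\|X^{(n)}_t\|_r^2\lesssim1+\|\varphi\|_r^2$ uniformly in $n$.

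Finally, on the event $\{\tau_n\leq T\}$ one has $\|X^{(n)}_{\tau_n\wedge T}\|_r\geq n$, so Markov's inequality together with the uniform bound gives $\P(\tau_n\leq T)\lesssim(1+\|\varphi\|_r^2)/n^2\to0$ as $n\to\infty$, for every $T>0$; hence $\tau_\infty=\infty$ a.s. and $X$ is the desired global mild solution. The main obstacle is precisely this uniform-in-$n$ estimate: because the delay is infinite, controlling $\|X_t\|_r=\sup_{\theta\leq0}\e^{r\theta}\|X(t+\theta)\|_H$ cannot be achieved by a naive \gron\ argument on $\E\|X(t)\|_H^2$ alone, but requires the weighted splitting of the history integral against $\rho\in\mathscr{P}_{2r}(\R^-)$ and the careful BDG absorption of the martingale supremum, as already encountered in the proof of Theorem \ref{Thm:4.3}.
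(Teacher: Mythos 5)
Your proposal is correct and follows essentially the same route as the paper: truncate the coefficients radially so that (H1) plus Lemma \ref{Lem:1113} yields global solutions $X^{(n)}$, patch them together along the stopping times where the segment norm reaches $n$, and rule out explosion via an It\^o--BDG--Gr\"onwall estimate on $\e^{2rt}\|X(t)\|_H^2$ (using the contractivity of the semigroup, (H2) against $\mathbf{0}$, (H3), and the weighted splitting of the history integral with $\rho\in\mathscr{P}_{2r}(\R^-)$) followed by Markov's inequality. The only cosmetic difference is that you phrase the final bound through the segment norm as in \eqref{Eq:1013:1}, while the paper bounds $\E\|X_{T\wedge\sigma_n}\|_r^2$ directly by $\|\varphi\|_r^2$ plus the supremum term, which is the same estimate.
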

\begin{proof}
Following the approach in the proof of \cite[Theorem 3.2]{Wu2017JDE}, we first need to demonstrate that Eq.\ \eqref{EQ:GENERAL:01} has a unique maximal local mild solution using a truncation argument. We then show that the explosion time is almost surely infinite, which ensures that Eq.\ \eqref{EQ:GENERAL:01} indeed possesses a unique global mild solution.

 
 For any  $n \geq  1$, let us define truncation the functionals \( B_{n} \) and \( \Sigma_{n} \) as follows:
		\[
		\begin{array}{c}
			B_{n}(\x)=\left\{\begin{array}{ll}
				B(\x), & \|\x\|_{r} \leq n, \\
				B\left(n \x/ \|\x\|_{r}\right), & \|\x\|_{r}>n,
			\end{array}\right. \quad
			\Sigma_{n}(\x)=\left\{\begin{array}{ll}
				\Sigma(\x), & \|\x\|_{r} \leq n, \\
				\Sigma\left(n \x/ \|\x\|_{r}\right), & \|\x\|_{r}>n.
			\end{array}\right.
		\end{array}
		\]
	It is obvious that  \( B_{n} \) and \( \Sigma_{n} \) satisfy the global Lipschitz condition and the linear growth condition. Consider
	\begin{equation}\label{eq:nicai}
			\d X^{(n)}(t) =[AX^{(n)}(t)+B(X_t^{(n)})]\d t+\Sigma(X_t^{(n)})\d W(t),\quad X_0^{(n)}=\varphi.
		\end{equation}
 Hence by Lemma \ref{Lem:1113}, there exist a unique mild solution $(X^{(n)}(t))_{t\geq 0}$ satisfying  \eqref{eq:nicai}. Define a stopping times sequence
		\[
		 \sigma_{n}=\inf \left\{t \geq 0 ,\|X_t^{(n)}\|_r\geq n\right\},
		\]
    with the usual convention \( \inf \varnothing=\infty \).  It is not difficult to show that
 \[
X^{(n+1)}(t)=X^{(n)}(t),\quad \text{if }~0\leq t\leq \sigma_n.
 \]
This implies that	\(\left\{\sigma_{n}\right\}_{n \geq 1} \) is a non-decreasing stopping times sequence and \( \sigma_{n} \rightarrow \sigma_{\infty} \) a.s., as \( n \rightarrow \infty \).	Define \( X(t)\), \(0\leq t \leq \sigma_{\infty} \), by
\[
X(t)=X^{(n)}(t),\quad t\in [\sigma_{n-1},\sigma_n),\;n\geq 1.
\]
 By a standard procedure, we can show \( X(t)\), \(0\leq t \leq \sigma_{\infty} \), is also the unique maximal local mild solution.
		
		To show that this mild solution is global, it is sufficient to prove that \( \sigma_{\infty}=\infty \) a.s.  This is equivalent to proving that for any \( T>0\), \(\mathbb{P}\left(\sigma_{n} \leq T\right) \rightarrow 0 \) as \( n \rightarrow \infty \).  By the definitions of $\sigma_n$ and $X(t)$, we have $\|X_{\sigma_n}\|_{r} = n$, which implies 
		\begin{equation}\label{eq:xiawu}
			\begin{aligned}
				n^2\P(\sigma_n\leq T)&=\E\left(\|X_{\sigma_{n}}\|_r^2 \1_{\{\sigma_n\leq T \}} \right)\leq \E\|X_{T\wedge\sigma_n}\|_r^2\\
				&\leq \|\varphi\|_r^2+\E\Big(\sup _{0< t\leq T\wedge\sigma_n} \e^{2 r t}\left\|X(t)\right\|_H^{2}\Big).
			\end{aligned}
		\end{equation}
 Set $H(t):=\E\left(\sup _{0< s\leq t\wedge\sigma_n} \e^{2 r s}\left\|X(s)\right\|_H^{2}\right)$, $0\leq t\leq T$.  In what follows, we shall estimate $H(t)$.  It follows from (\hyperlink{H2}{H2}) and (\hyperlink{H3}{H3}) that there exists a positive constant $C$ such that 
  \[
\begin{aligned}
   2&\langle\mathbf{x}(0), A\x(0)+B(\mathbf{x})\rangle+\|\Sigma(\mathbf{x})\|_{L_2}^2\\
   &=2\langle \mathbf{x}(0),A\mathbf{x}(0)\rangle+2\langle\mathbf{x}(0)-\textbf{0}_H,B(\mathbf{x})-B(\mathbf{0})\rangle+2\langle \mathbf{x}(0),B(\mathbf{0})\rangle+\|\Sigma(\mathbf{x})-\Sigma(\mathbf{0})+\Sigma(\mathbf{0})\|_{L_2}^2\\
   &\leq  C+(\alpha+1+2L)\|\mathbf{x}(0)\|_H^2+(\beta +2L)\int_{-\infty}^0\|\mathbf{x}(\theta)\|_H^2 \rho(\mathrm{d} \theta) 
\end{aligned}
  \]
  Thus, applying Itô's formula to \( \e^{2 r t}\left\|X(t)\right\|_H^{2}\) yields for any \( t \geq 0 \),
		\begin{equation*}\label{eq:shuijiao}
			\begin{aligned}
				\E&\left(\sup _{0< s\leq t\wedge\sigma_n} \e^{2 r s}\left\|X(s)\right\|_H^{2}\right)\\
				&\leq \|\varphi(0)\|_H^2+2r\E\int_{{0}}^{t\wedge\sigma_n}\e^{2rs}\|X(s)\|_H^2\d u\\
               &\quad+ \E\Bigg(\sup _{0< s\leq t\wedge\sigma_n}\int_{{0}}^s\e^{2ru}\Big(2\langle X(u),AX(u)+B(X_u) \rangle+\|\Sigma(X_u)\|_{L_2}^2\Big)\d u\Bigg)\\
    &\quad+ 2\E\left(\sup _{0< s\leq t\wedge\sigma_n}\int_{{0}}^s\e^{2ru}\langle X(u),\Sigma(X_u)\d W(u) \rangle \right)\\
				&\leq \|\varphi(0)\|_H^2+C\e^{2rt}+ (\alpha+1+2L+2r)\E\int_{{0}}^{t\wedge\sigma_n} \e^{2rs} \|X(s)\|_H^2\d s\\
                &\quad+(\beta +2L)\E \int_{{0}}^{t\wedge\sigma_n} \int_{-\infty}^{{0}}\e^{2rs}\|X(s+\theta)\|_H^2 \rho(\d \theta)\d s\\
    &\quad+ 2\E\left(\sup _{0< s\leq t\wedge\sigma_n}\int_{{0}}^s\e^{2ru}\langle X(u),\Sigma(X_u)\d W(u) \rangle \right).
			\end{aligned}
		\end{equation*}
	By means of BDG's inequality, it follows that
			\begin{align*}
				2\E&\left(\sup _{0< s\leq t\wedge\sigma_n}\int_{{0}}^s\e^{2ru}\langle X(u),\Sigma(X_u)\d W(u) \rangle \right)\\
				&\leq 8\sqrt{2}\E\left(\sup_{{0} < s \leq {t\wedge\sigma_n}} \e^{2rs}\|X(s)\|_H^2\int_{{0}}^{t\wedge\sigma_n}\e^{2rs}\|\Sigma(X_s)\|_{L_2}^2\d s \right)^{\frac{1}{2}}\\
				&\leq \frac{1}{2}H(t)+64\E\int_{{0}}^{t\wedge\sigma_n}\e^{2rs}\|\Sigma(X_s)\|_{L_2}^2\d s\\
				&\leq \frac{1}{2}H(t)+C\e^{2r t}+128L\E\int_{0}^{{t\wedge\sigma_n}}  \e^{2r s}\|X(s)\|_H^{2}\mathrm{d} s\\
                &\quad+128L\E\int_{0}^{{t\wedge\sigma_n}} \int_{-\infty}^{0} \e^{2r s}\|X(s+\theta)\|_H^{2} \rho(\mathrm{d} \theta) \mathrm{d} s.
			\end{align*}
	Moreover, by the Fubini theorem and the variable substitution technique, we deduce that
		\begin{equation*}\label{eq:zhuyaoanother}
			\begin{aligned}
				\int_{0}^{t\wedge\sigma_n} &\int_{-\infty}^{0} \e^{2r s}\|X(s+\theta)\|_H^{2} \rho(\mathrm{d} \theta) \mathrm{d} s \\
				&=\int_{0}^{t} \int_{-\infty}^{-s} \e^{-2 r\theta} \e^{2 r(s+\theta)}\|X(s+\theta)\|_H^{2} \rho(\mathrm{d} \theta) \mathrm{d} s+\int_{-t\wedge\sigma_n}^{0}\int_{0}^{t\wedge\sigma_n+\theta} \e^{2r(s-\theta)}\|X(s)\|_H^{2} \mathrm{d} s \rho(\mathrm{d} \theta) \\
				&\leq \delta_{2r}\left(\rho\right)\|\varphi\|_{r}^{2}t+\delta_{2r}\left(\rho\right) \int_{0}^{t\wedge\sigma_n}\e^{2r s}\|X(s)\|_H^2\mathrm{d}s.
			\end{aligned}
		\end{equation*}
	Combining the above calculations implies that
 \begin{equation*}\label{eq:fuzhi}
     \begin{aligned}
         H(t)
         &\lesssim \|\varphi(0)\|_H^2+\e^{2rt}+ \E\int_{{0}}^{t\wedge\sigma_n} \e^{2rs} \|X(s)\|_H^2\d s+\E \int_{{0}}^{t\wedge\sigma_n} \int_{\infty}^{{0}}\e^{2rs}\|X(s+\theta)\|_H^2 \rho(\d \theta)\d s\\
         &\lesssim 1+\E\int_{{0}}^{t\wedge\sigma_n} \e^{2rs} \|X(s)\|_H^2\d s\lesssim 1+\int_{{0}}^{t} H(s)\d s
     \end{aligned}
 \end{equation*}
which, together with \gron's inequality, arrives at $H(t)\leq C\e^{Ct}.$
		Choosing $t=T$ gives 
  \begin{equation}\label{Eq:1031:1}
      n^2\P(\sigma_n\leq T)\leq \|\varphi\|_r^2+C\e^{CT}.
  \end{equation}
		Note that the right side of \eqref{Eq:1031:1} is independent of $n$. Letting $n\to \infty$ yields 
		\[
		\limsup_{n \rightarrow \infty}\P(\sigma_n\leq T) = 0,
		\]
		which implies that Eq. \eqref{EQ:GENERAL:01} has a unique global mild solution  $X(t)$ on $[0, \infty)$ almost surely.
\end{proof}
   
\end{appendix}

\section*{Declarations}
\noindent Ethical approval: Not applicable.

\bigskip\noindent Funding:  Fubao Xi and Zuozheng Zhang were supported by the National Natural Science Foundation of China under Grant No. 12071031.

\bigskip\noindent Availability of data and materials: Data and material sharing are not applicable to this article, as no datasets or materials were generated or analyzed during the current (theoretical) study.

\bibliographystyle{plain}


\bibliography{SFPDEs_ergodicity_new}							

\begin{thebibliography}{10}

\bibitem{bao2016permanence}
Jianhai Bao and Jinghai Shao.
\newblock Permanence and extinction of regime-switching predator-prey models.
\newblock {\em SIAM Journal on Mathematical Analysis}, 48(1):725--739, 2016.

\bibitem{BAO2023NONLINEAR}
Jianhai Bao, Jinghai Shao, and Chenggui Yuan.
\newblock Invariant probability measures for path-dependent random diffusions.
\newblock {\em Nonlinear Analysis}, 228:Paper No. 113201, 28, 2023.

\bibitem{BAO2019SPA}
Jianhai Bao, Feng-Yu Wang, and Chenggui Yuan.
\newblock Asymptotic log-{H}arnack inequality and applications for stochastic
  systems of infinite memory.
\newblock {\em Stochastic Processes and their Applications},
  129(11):4576--4596, 2019.

\bibitem{BAO204NATMA}
Jianhai Bao, George Yin, and Chenggui Yuan.
\newblock Ergodicity for functional stochastic differential equations and
  applications.
\newblock {\em Nonlinear Analysis: Theory, Methods and Applications},
  98:66--82, 2014.

\bibitem{Bao_Yin_Yuan_Book_2016}
Jianhai Bao, George Yin, and Chenggui Yuan.
\newblock {\em Asymptotic analysis for functional stochastic differential
  equations}.
\newblock Springer Briefs in Mathematics. Springer, Cham, 2016.

\bibitem{berman1994nonnegative}
Abraham Berman and Robert~James Plemmons.
\newblock {\em Nonnegative matrices in the mathematical sciences}, volume~9 of
  {\em Classics in Applied Mathematics}.
\newblock Society for Industrial and Applied Mathematics (SIAM), Philadelphia,
  PA, 1994.
\newblock Revised reprint of the 1979 original.

\bibitem{OLEG2020AAP}
Oleg Butkovsky, Alexei Kulik, and Michael Scheutzow.
\newblock Generalized couplings and ergodic rates for {SPDE}s and other markov
  models.
\newblock {\em The Annals of Applied Probability}, 30(1):1--39, 2020.

\bibitem{chen2004markov}
Mu-Fa Chen.
\newblock {\em From {M}arkov chains to non-equilibrium particle systems}.
\newblock World Scientific Publishing Co., Inc., River Edge, NJ, second
  edition, 2004.

\bibitem{cloez2015exponential}
Bertrand Cloez and Martin Hairer.
\newblock Exponential ergodicity for markov processes with random switching.
\newblock {\em Bernoulli}, 21:505--536, 2015.

\bibitem{Prato_Zabczyk_Ergodicity_1996}
Giuseppe Da~Prato and Jerzy Zabczyk.
\newblock {\em Ergodicity for infinite-dimensional systems}, volume 229 of {\em
  London Mathematical Society Lecture Note Series}.
\newblock Cambridge University Press, Cambridge, 1996.

\bibitem{hairer2002PTRF}
Martin Hairer.
\newblock Exponential mixing properties of stochastic {PDE}s through asymptotic
  coupling.
\newblock {\em Probability Theory and Related Fields}, 124(3):345--380, 2002.

\bibitem{hino2006functional}
Yoshiyuki Hino, Satoru Murakami, and Toshiki Naito.
\newblock {\em Functional differential equations with infinite delay}.
\newblock Springer, 2006.

\bibitem{KUKSIN2001CMP}
Sergei Kuksin and Armen Shirikyan.
\newblock A coupling approach to randomly forced nonlinear {PDE}'s. {I}.
\newblock {\em Communications in Mathematical Physics}, 221(2):351--366, 2001.

\bibitem{li2021convergence}
Jun Li and Fubao Xi.
\newblock Convergence, boundedness, and ergodicity of regime-switching
  diffusion processes with infinite memory.
\newblock {\em Frontiers of Mathematics in China}, 16:499--523, 2021.

\bibitem{liu2005stability}
Kai Liu.
\newblock {\em Stability of infinite dimensional stochastic differential
  equations with applications}, volume 135 of {\em Chapman and Hall/CRC
  Monographs and Surveys in Pure and Applied Mathematics}.
\newblock Chapman \& Hall/CRC, Boca Raton, FL, 2006.

\bibitem{LIU2009JEE}
Wei Liu.
\newblock Harnack inequality and applications for stochastic evolution
  equations with monotone drifts.
\newblock {\em Journal of Evolution Equations}, 9(4):747--770, 2009.

\bibitem{Liu_Rockner_Book_2015}
Wei Liu and Michael R\"{o}ckner.
\newblock {\em Stochastic partial differential equations: an introduction}.
\newblock Universitext. Springer, Cham, 2015.

\bibitem{Mao_Yuan_Book_2006}
Xuerong Mao and Chenggui Yuan.
\newblock {\em Stochastic differential equations with {M}arkovian switching}.
\newblock Imperial College Press, London, 2006.

\bibitem{Masmoudi2002CMP}
Nader Masmoudi and Lai-Sang Young.
\newblock Ergodic theory of infinite dimensional systems with applications to
  dissipative parabolic {PDE}s.
\newblock {\em Communications in Mathematical Physics}, 227(3):461--481, 2002.

\bibitem{Mattingly2002CMP}
Jonathan~C. Mattingly.
\newblock Exponential convergence for the stochastically forced
  {N}avier-{S}tokes equations and other partially dissipative dynamics.
\newblock {\em Communications in Mathematical Physics}, 230(3):421--462, 2002.

\bibitem{nguyen2021stability}
Dang~Hai Nguyen, Duy Nguyen, and Son~Luu Nguyen.
\newblock Stability in distribution of path-dependent hybrid diffusion.
\newblock {\em SIAM Journal on Control and Optimization}, 59(1):434--463, 2021.

\bibitem{NG2016}
Dang~Hai Nguyen and George Yin.
\newblock Modeling and analysis of switching diffusion systems: Past-dependent
  switching with a countable state space.
\newblock {\em SIAM Journal on Control and Optimization}, 54(5):2450--2477,
  2016.

\bibitem{SHAO2015SPA}
Jinghai Shao.
\newblock Ergodicity of regime-switching diffusions in {W}asserstein distances.
\newblock {\em Stochastic Processes and their Applications}, 125(2):739--758,
  2015.

\bibitem{shao2018invariant}
Jinghai Shao.
\newblock Invariant measures and {E}uler--{M}aruyama's approximations of
  state-dependent regime-switching diffusions.
\newblock {\em SIAM Journal on Control and Optimization}, 56(5):3215--3238,
  2018.

\bibitem{shao2013strong}
Jinghai Shao and Fubao Xi.
\newblock Strong ergodicity of the regime-switching diffusion processes.
\newblock {\em Stochastic Processes and their Applications},
  123(11):3903--3918, 2013.

\bibitem{song2012optimal}
Qingshuo Song, George Yin, and Chao Zhu.
\newblock Optimal switching with constraints and utility maximization of an
  indivisible market.
\newblock {\em SIAM Journal on Control and Optimization}, 50(2):629--651, 2012.

\bibitem{Wu2017JDE}
Fuke Wu, George Yin, and Hongwei Mei.
\newblock Stochastic functional differential equations with infinite delay:
  existence and uniqueness of solutions, solution maps, {M}arkov properties,
  and ergodicity.
\newblock {\em Journal of Differential Equations}, 262(3):1226--1252, 2017.

\bibitem{xi2009asymptotic}
Fubao Xi.
\newblock Asymptotic properties of jump-diffusion processes with
  state-dependent switching.
\newblock {\em Stochastic Processes and their Applications}, 119(7):2198--2221,
  2009.

\bibitem{xi2017feller}
Fubao Xi and Chao Zhu.
\newblock On {F}eller and strong {F}eller properties and exponential ergodicity
  of regime-switching jump diffusion processes with countable regimes.
\newblock {\em SIAM Journal on Control and Optimization}, 55(3):1789--1818,
  2017.

\bibitem{Yin2010}
George Yin and Chao Zhu.
\newblock {\em {Hybrid Switching Diffusions: Properties and Applications}},
  volume~63.
\newblock Springer, New York, 2010.

\bibitem{zhai2024exponential}
Yafei Zhai and Fubao Xi.
\newblock Exponential ergodicity for stochastic functional differential
  equations with markovian switching.
\newblock {\em Journal of Mathematical Analysis and Applications},
  534(1):128030, 2024.

\bibitem{zhang2001stock}
Qing Zhang.
\newblock Stock trading: An optimal selling rule.
\newblock {\em SIAM Journal on Control and Optimization}, 40(1):64--87, 2001.

\bibitem{zhu2009competitive}
Chao Zhu and George Yin.
\newblock On competitive {L}otka--{V}olterra model in random environments.
\newblock {\em Journal of Mathematical Analysis and Applications},
  357(1):154--170, 2009.

\end{thebibliography}
\end{document}